\newtheorem{theorem}{Theorem}[section]
\newtheorem{proposition}[theorem]{Proposition}
\newtheorem{lemma}[theorem]{Lemma}
\newtheorem{corollary}[theorem]{Corollary}
\newtheorem{D}[theorem]{Definition}
\newenvironment{definition}{\begin{D} \rm }{\end{D}}
\newtheorem{R}[theorem]{Remark}
\newenvironment{remark}{\begin{R}\rm }{\end{R}}
\newtheorem{E}[theorem]{Example}
\numberwithin{equation}{section}
\newcommand{\Dis}{\displaystyle}
\def\Q{\mathbb{Q}}
\def\Cee{\mathbb{C}}
\def\Gr{\operatorname{Gr}}
\def\Sp{\operatorname{Sp}}
\def\scrO{\mathcal{O}}
\def\spcheck{^{\vee}}
\def\hX{\widehat{X}}
\def\cX{\mathcal{X}}
\def\cE{\mathcal{E}}
\def\cL{\mathcal{L}}
\def\cY{\mathcal{Y}}
\def\hcX{\widehat{\cX}}
\def\uOb{\underline\Omega^\bullet}
\def\uOp{\underline\Omega^p}
\def\uOnp{\underline\Omega^{n-p}}
  \def\bD{\mathbb{D}}
  \title{The higher Du Bois and higher rational properties for isolated singularities}
\author[R. Friedman]{Robert Friedman}
\address{Columbia University, Department of Mathematics, New York, NY 10027}
\email{rf@math.columbia.edu}
\author[R. Laza]{Radu Laza}
\address{Stony Brook University, Department of Mathematics, Stony Brook, NY 11794}
\email{radu.laza@stonybrook.edu}
\begin{document}
\begin{abstract}
 Higher rational and higher Du Bois singularities have recently been introduced as natural generalizations of the standard definitions of rational and Du Bois singularities. In this note, we discuss these properties for isolated singularities, especially in the locally complete intersection (lci) case. First, we reprove the fact that a $k$-rational isolated singularity is $k$-Du Bois  without any lci assumption. For isolated lci singularities, we  give a complete characterization of the  $k$-Du Bois and $k$-rational singularities in terms of standard invariants of singularities. In particular, we show that $k$-Du Bois singularities are $(k-1)$-rational for isolated lci singularities.  In the course of the proof, we establish some new relations between invariants of isolated lci singularities and show that many of these vanish. The methods also lead to a quick proof of an inversion of adjunction theorem in the isolated lci case.  Finally, we discuss  some results specific to the hypersurface case. \end{abstract}
\thanks{Research of the second author is supported in part by NSF grant DMS-2101640}.
\bibliographystyle{amsalpha}
\maketitle

\section{Introduction}\label{section1}
In algebraic geometry, it is often important to identify singularities which are ``mild" from an appropriate viewpoint, e.g.\ that of Hodge theory or birational geometry. From a cohomological perspective,  rational and Du Bois singularities are classical examples of such types of singularities, and canonical and log canonical singularities form an important class from an adjunction theoretic perspective.  There is a well understood relationship between these classes of singularities (see \cite{Steenbrink}, \cite{Kovacs99}, \cite{KK}).  In particular, if a singularity is normal and Gorenstein, then it is Du Bois if and only if it is  log canonical. Recently, M. Musta\c{t}\u{a}  and M. Popa \cite{MP-Mem} initiated  the study of higher adjunction properties and introduced the notion of \textsl{$k$-log canonical}  hypersurface singularities. This led to the study of \textsl{$k$-Du Bois singularities} (\cite{MOPW} and \cite{JKSY-duBois}) and \textsl{$k$-rational singularities} (\cite{KL2}, \cite{FL,FL-DuBois}). The case $k=0$ recovers the standard notions (of Du Bois and rational singularities), while as $k$ increases the singularities become milder, leading to   Hodge theoretic behavior closer to that of the smooth case (cf.\ \cite[Cor.\ 1.4, Cor.\ 1.11]{FL-DuBois}).

The purpose of this note is to discuss these notions of singularities and associated results in the case of isolated singularities, with particular attention to isolated lci singularities, using techniques of the  ``classical" Hodge theory of singularities as developed by Steenbrink in \cite{Steenbrink} and  \cite{SteenbrinkDB}. Along the way, we find new relations between well-known invariants of singularities and are able to extend some results previously known in the hypersurface case to the case of \emph{isolated} lci singularities. The starting point of this paper was \S3 of the first version of \cite{FL}, which we expand and streamline here. The subsequent papers \cite{FL-DuBois}, \cite{MP-rat} and \cite{ChenDirksM}  deal with the general case of certain results in this paper, but the discussion of the isolated case helps to clarify and extend many of these statements.

\medskip

We begin by defining the  higher Du Bois and higher rationality properties.
\begin{definition}\label{def1} Let $X$ be a reduced scheme (over $\Cee$) or complex analytic space. For $p\ge 0$, we denote by $\Omega_X^p$ the sheaf $\bigwedge^p\Omega^1_X$ of K\"ahler differentials on $X$, and by $\uOp_X$ the $p^{\text{\rm{th}}}$ graded piece of the filtered de Rham complex $\uOb_X$ (or Deligne-Du Bois complex; cf. \cite{duBois}, \cite[\S7.3]{PS}). Note that   $\uOp_X$ exists as an object in the bounded derived category $D_b^{\text{\rm{coh}}}(X)$ of $X$, and, for every $p$,  there is a natural map $\phi^p:\Omega_X^p\to \uOp_X$. Then $X$ \textsl{has $k$-Du Bois singularities} if  
$\phi^p$ is an  isomorphism  in $D_b^{\text{\rm{coh}}}(X)$ for $0\le p\le k$, where the case $k=0$ is the usual definition of Du Bois singularities. 
\end{definition}

The related notion of \textsl{$k$-rational} singularities has been proposed (in varying degrees of generality) in \cite[\S4]{KL2}, \S3 of the first version of \cite{FL}, and in \cite{FL-DuBois}. Namely, by   \cite[Lemma 3.11]{FL-DuBois}, the universal properties of $\uOb_X$ lead to natural duality maps $\psi^p:\uOp_X\to \bD_X(\uOnp_X)$, where $\bD_X$ denotes the Grothendieck duality functor for $D_b^{\text{\rm{coh}}}(X)$. Then  \cite[Def. 3.12]{FL-DuBois} defines $k$-rational singularities generalizing the usual definition of rational singularities (the case $k=0$), and factoring through the $k$-Du Bois definition above, as follows:

\begin{definition}\label{def3} The variety $X$ has \textsl{$k$-rational singularities} if the maps 
$\Omega_X^p\xrightarrow{\psi^p \circ \phi^p}  \bD_X(\uOnp_{X})$ 
are isomorphisms in $D_b^{\text{\rm{coh}}}(X)$ for all $0\le p\le k$.
\end{definition}

 While this definition might seem hard to apply in practice, 
 the situation becomes manageable in the case of isolated singularities. Specifically, assuming that $(X,x)$ is an isolated singularity, we let $X$ be a good Stein representative and $\pi\colon \hX \to X$ be a log resolution with reduced exceptional divisor $E$. Then the $k$-Du Bois condition reads
 (\cite[Ex. 7.25]{PS}, but see also \cite[Rem. 3.19]{FL-DuBois}):

\begin{theorem}\label{thm2} Suppose that $X$ is normal and has an isolated singularity at $x$.   Then $X$  has $k$-Du Bois singularities $\iff$ for all $p\le k$, the map $\Omega^p_X \to \mathcal{H}^0\uOp_X$ is an isomorphism and,  for all $p\le k$   and all $q> 0$, $H^q(\hX; \Omega^p_{\hX}(\log E)(-E))=0$.

Moreover, if $X$ has an isolated lci singularity, then   $X$  has $k$-Du Bois singularities $\iff$  for all $p\le k$   and all $q> 0$, $H^q(\hX; \Omega^p_{\hX}(\log E)(-E))=0$. \qed
\end{theorem}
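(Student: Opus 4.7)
The plan is to reduce the $k$-Du Bois condition to explicit cohomological conditions on the log resolution, following Steenbrink's approach for isolated singularities. Since $X$ is a good Stein representative with a unique singular point $x$ and $\uOp_X|_{X\setminus\{x\}} = \Omega^p_{X\setminus\{x\}}$ is concentrated in degree $0$, the sheaves $\mathcal H^i(\uOp_X)$ for $i>0$ are skyscrapers supported at $x$; Stein vanishing for coherent sheaves then gives $\mathbb H^i(X,\uOp_X) = \mathcal H^i(\uOp_X)_x$ for $i>0$. Hence $\phi^p$ is a quasi-isomorphism iff both (i) $\mathcal H^0(\phi^p):\Omega^p_X\to\mathcal H^0(\uOp_X)$ is an isomorphism and (ii) $\mathbb H^i(X,\uOp_X)=0$ for all $i>0$, which is the split required by the theorem.

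The heart of the proof is the identification
$$\mathbb H^i(X,\uOp_X)\;\cong\;H^i(\hX,\Omega^p_{\hX}(\log E)(-E))\quad(i>0),$$
which is [PS, Ex.~7.25]. I would derive this from the cubical hyperresolution distinguished triangle
$$\uOp_X\;\longrightarrow\;R\pi_*\Omega^p_{\hX}\oplus\uOp_{\{x\}}\;\longrightarrow\;R\pi_*\uOp_E\;\xrightarrow{+1}$$
(using $\uOp_{\hX}=\Omega^p_{\hX}$ since $\hX$ is smooth, and $\uOp_{\{x\}}=0$ for $p>0$) together with the distinguished triangle
$$\Omega^p_{\hX}(\log E)(-E)\;\longrightarrow\;\Omega^p_{\hX}\;\longrightarrow\;\uOp_E\;\xrightarrow{+1},$$
valid for an SNC divisor $E$ in smooth $\hX$ and checkable in local coordinates (the kernel of the restriction $\Omega^p_{\hX}\to\uOp_E$ is precisely the log differentials vanishing on $E$). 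Pushing forward and comparing via the octahedral axiom yields $\mathcal H^i(\uOp_X)\cong R^i\pi_*\Omega^p_{\hX}(\log E)(-E)$ for $i>0$, with the $\uOp_{\{x\}}$ correction affecting only $\mathcal H^0$ (and only when $p=0$). Finally, since $\pi$ is proper and $X$ is Stein, $(R^i\pi_*\mathcal F)_x=H^i(\hX,\mathcal F)$ for any coherent $\mathcal F$ and $i>0$, completing the identification.

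For the ``moreover'' statement in the isolated lci case, I would show that condition (i) is automatic. The idea is that for an isolated lci singularity of dimension $n\ge 2$, $\Omega^p_X$ is reflexive (satisfies Serre's $S_2$) in the relevant range of $p$: this follows from the conormal presentation $\mathcal N^{\vee}_{X/\Cee^N}\to\Omega^1_{\Cee^N}|_X\to\Omega^1_X\to 0$ (a locally free resolution of length one), combined with the Cohen--Macaulayness of $X$ and standard Koszul-type computations for the exterior powers $\wedge^p$. Meanwhile $\mathcal H^0(\uOp_X)$ is itself torsion-free and $S_2$, being a coherent extension from the smooth locus through a proper birational map. Since both sheaves agree on $X\setminus\{x\}$, a complement of codimension $n\ge 2$, the $S_2$ property forces $\Omega^p_X\cong\mathcal H^0(\uOp_X)$. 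The main obstacle I anticipate is not the triangle manipulation but rather verifying the reflexivity/$S_2$-ness of $\Omega^p_X$ uniformly for $p\le k$, particularly in the boundary cases near $p=n$, which may require a separate duality argument.
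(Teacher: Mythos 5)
The paper offers no proof of this theorem: it is stated with a \qed{} and attributed to \cite[Ex.\ 7.25]{PS} and \cite[Rem.\ 3.19]{FL-DuBois}. Your first two paragraphs correctly reconstruct the standard argument behind those references --- the hypercohomology spectral sequence on the Stein space $X$, the descent triangle for the log resolution, and the short exact sequence $0\to\Omega^p_{\hX}(\log E)(-E)\to\Omega^p_{\hX}\to\uOp_E\to 0$, which is Lemma~\ref{normalcrossingbasics}(iii) --- so there is nothing to object to there. The gap is in the ``moreover.'' You justify the $S_2$ property of $\mathcal H^0(\uOp_X)\cong R^0\pi_*\Omega^p_{\hX}(\log E)(-E)$ by saying it is ``a coherent extension from the smooth locus through a proper birational map.'' That general principle is false: the direct image of a vector bundle under a resolution is torsion-free but need not be $S_2$; the basic counterexample is $\pi_*\scrO_{\hX}(-E)=\mathfrak m_x\subset\scrO_X$, which has depth $1$ at $x$ and reflexive hull $\scrO_X$. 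Worse, for a torsion-free coherent sheaf on normal $X$ restricting to $\Omega^p_U$ on $U=X\setminus\{x\}$, being $S_2$ is \emph{equivalent} to coinciding with $j_*\Omega^p_U$ (where $j\colon U\hookrightarrow X$), so your $S_2$ assertion for $\mathcal H^0(\uOp_X)$ is essentially half of the conclusion you want, asserted without argument.

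The repair is that you do not need that half. Greuel's vanishing (quoted in the paper in the proof of Theorem~\ref{dBzero}) gives $H^q_x(X;\Omega^p_X)=0$ for $1\le p+q\le n-1$; hence for $1\le p\le n-2$ the sheaf $\Omega^p_X$ is torsion-free of depth $\ge 2$ at $x$ and so $\Omega^p_X\xrightarrow{\ \sim\ } j_*\Omega^p_U$. Since $R^0\pi_*\Omega^p_{\hX}(\log E)(-E)$ injects into $j_*\Omega^p_U$ (it is the direct image of a subsheaf of the torsion-free sheaf $\Omega^p_{\hX}$) and the composite $\Omega^p_X\to\mathcal H^0(\uOp_X)\hookrightarrow j_*\Omega^p_U$ is the natural map, both arrows in the composite are forced to be isomorphisms; in particular the $S_2$ property of $\mathcal H^0(\uOp_X)$ comes out as a consequence rather than an input. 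Note that this, like Greuel's theorem, only covers $p\le n-2$: for $p=n-1$ one loses depth $2$ and for $p=n$ even torsion-freeness, which is where your anticipated ``boundary'' difficulty genuinely lives. That restriction is harmless for every use of the theorem in the paper (there $k\le\frac12(n-1)\le n-2$ for $n\ge3$, and for $n=2$ only $p=0$ occurs, which is the normality/seminormality case), but as stated for arbitrary $k$ the cases $p=n-1,n$ would require the separate argument you allude to, or a reduction to the fact that the hypotheses in that range force $X$ to be smooth (Theorem~\ref{mainDB0} and Corollary~\ref{boundk}).
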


As in the case of $k$-Du Bois singularities, the situation for rational singularities is much simpler if  $X$ has isolated singularities \cite[Cor. 3.17 and Lem. 3.18]{FL-DuBois}:

\begin{theorem}\label{thm4} Suppose that $X$ is normal and has an isolated singularity at $x$. Then $X$ has $k$-rational singularities $\iff$ for all $p\le k$,   the natural map $\Omega_X^p \to R^0\pi_*\Omega^p_{\hX}(\log E)$ is an isomorphism, and $H^q(\hX; \Omega^p_{\hX}(\log E))=0$ for all $q> 0$.

Moreover, if $X$ has an isolated lci singularity, then   $X$  has $k$-rational singularities $\iff$  for all $p\le k$   and all $q> 0$, $H^q(\hX; \Omega^p_{\hX}(\log E))=0$. \qed
\end{theorem}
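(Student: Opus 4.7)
The plan is to translate the duality-theoretic definition of $k$-rationality into an explicit statement about $R\pi_*\Omega^p_{\hX}(\log E)$ by exploiting Grothendieck duality on the log resolution $\pi:\hX\to X$. As a first step, I identify $\bD_X(\uOnp_X)$ with $R\pi_*\Omega^p_{\hX}(\log E)$. For an isolated singularity, standard Steenbrink-type computations (the same ones underlying Theorem \ref{thm2}) express $\uOnp_X$ in terms of $R\pi_*\Omega^{n-p}_{\hX}(\log E)(-E)$ up to controlled contributions at $x$. Applying relative Grothendieck duality $\bD_X\circ R\pi_* = R\pi_*\circ \bD_{\hX}$ together with the perfect wedge pairing
\[
\Omega^p_{\hX}(\log E)\otimes \Omega^{n-p}_{\hX}(\log E)(-E)\longrightarrow \Omega^n_{\hX}(\log E)(-E)=\omega_{\hX},
\]
which uses the identity $\Omega^n_{\hX}(\log E)=\omega_{\hX}(E)$, yields $\bD_X(\uOnp_X)\cong R\pi_*\Omega^p_{\hX}(\log E)$, and transports $\psi^p\circ\phi^p$ into the natural pullback map $\Omega^p_X\to R\pi_*\Omega^p_{\hX}(\log E)$.

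Second, I decompose the condition that this map is a quasi-isomorphism into its degree-wise parts: the iso $\Omega^p_X\xrightarrow{\cong}R^0\pi_*\Omega^p_{\hX}(\log E)$ in degree zero, and the vanishing $R^q\pi_*\Omega^p_{\hX}(\log E)=0$ for $q>0$. Since $X$ is chosen as a good Stein neighborhood of the isolated point $x$, the higher direct images are skyscrapers at $x$ whose global sections (and hence stalks) coincide with $H^q(\hX,\Omega^p_{\hX}(\log E))$. This establishes the first, non-lci, assertion.

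For the \emph{moreover} part in the isolated lci case, I must show that the degree-zero iso follows automatically from the stated higher vanishings. My plan is to pair the residue short exact sequences
\[
0\to \Omega^p_{\hX}(\log E)(-E)\to \Omega^p_{\hX}\to \Omega^p_E\to 0,\qquad 0\to \Omega^p_{\hX}\to \Omega^p_{\hX}(\log E)\to \Omega^{p-1}_E\to 0,
\]
together with Grauert-Riemenschneider and the lci hypothesis, in order to transfer the given higher vanishing into the corresponding vanishing for $\Omega^p_{\hX}(\log E)(-E)$. Combined with the lci clause of Theorem \ref{thm2}, this delivers the $k$-Du Bois property; in the lci setting the latter supplies the identification $\Omega^p_X\cong \mathcal{H}^0\uOp_X$, and the Grothendieck duality comparison of the first paragraph then propagates this into the desired iso $\Omega^p_X\cong R^0\pi_*\Omega^p_{\hX}(\log E)$.

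The principal technical hurdle is the identification $\bD_X(\uOnp_X)\cong R\pi_*\Omega^p_{\hX}(\log E)$ at the singular point: one must carefully match the contribution of $\uOnp_X$ at $x$ with the Grothendieck dual of $R\pi_*\Omega^{n-p}_{\hX}(\log E)(-E)$, which requires precise control over Steenbrink's construction of the filtered de Rham complex for isolated singularities and commutation of the various natural maps with duality.
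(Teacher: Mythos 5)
A preliminary remark: the paper itself gives no proof of Theorem~\ref{thm4}; it is quoted from \cite{FL-DuBois} (Cor.~3.17 and Lem.~3.18), so there is no internal argument to compare against line by line. Your first paragraph follows the same route as that reference: identify $\uOnp_X$ with $R\pi_*\Omega^{n-p}_{\hX}(\log E)(-E)$ (for $n-p\ge 1$, with the usual correction at $x$ when $n-p=0$), apply relative Grothendieck duality and the perfect pairing $\Omega^p_{\hX}(\log E)\otimes\Omega^{n-p}_{\hX}(\log E)(-E)\to\omega_{\hX}$ to get $\bD_X(\uOnp_X)\cong R\pi_*\Omega^p_{\hX}(\log E)$ up to shift, and then split the quasi-isomorphism condition into its $R^0$ and $R^{>0}$ parts over a good Stein representative. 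That outline is correct, though you rightly flag that the compatibility of $\psi^p\circ\phi^p$ with the pullback map is the real content and you do not carry it out.

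The genuine gap is in the ``moreover'' clause. Your plan is to first upgrade the vanishing of $H^q(\hX;\Omega^p_{\hX}(\log E))$ to that of $H^q(\hX;\Omega^p_{\hX}(\log E)(-E))$ and then invoke the lci clause of Theorem~\ref{thm2}. Two objections. First, that upgrade is essentially the implication ``$k$-rational $\Rightarrow$ $k$-Du Bois'' (Theorem~\ref{thm5}), which the paper proves in Section~\ref{section3} by a nontrivial spectral-sequence argument resting on Lemma~\ref{linkexseq}; it does not follow from pairing the two residue sequences with Grauert--Riemenschneider as you sketch (note also that the cokernels in those sequences are $\Omega^p_E/\tau^p_E$ and $\Omega^{p-1}_{E^{[0]}}$, not $\Omega^p_E$ and $\Omega^{p-1}_E$, so they are not even correctly stated). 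Second, even granting the $k$-Du Bois property, what it yields is $\Omega^p_X\cong\mathcal{H}^0\uOp_X\cong R^0\pi_*\Omega^p_{\hX}(\log E)(-E)$; the further map to $R^0\pi_*\Omega^p_{\hX}(\log E)$ is injective but nothing you say forces it to be surjective, and your final ``propagation'' step is precisely this missing surjectivity. The correct mechanism is simpler and needs no vanishing hypothesis: by Greuel's theorem $H^q_x(X;\Omega^p_X)=0$ for $1\le p+q\le n-1$, so $\Omega^p_X\xrightarrow{\ \sim\ }j_*\Omega^p_U$ for $1\le p\le n-2$ (and for $p=0$ by normality), where $j:U=X\setminus\{x\}\hookrightarrow X$; since $R^0\pi_*\Omega^p_{\hX}(\log E)$ injects into $j_*\Omega^p_U$ by restriction to $\hX\setminus E\cong U$, it is sandwiched between $\Omega^p_X$ and $j_*\Omega^p_U\cong\Omega^p_X$, which forces $\Omega^p_X\to R^0\pi_*\Omega^p_{\hX}(\log E)$ to be an isomorphism. (The values of $p$ beyond this range do not occur for a singular $k$-rational isolated lci, by Corollary~\ref{boundk}.)
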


 In \cite{FL-DuBois}, partially generalizing a theorem of Kov\'acs \cite{Kovacs99}, we showed:

\begin{theorem}\label{thm5} Suppose that $X$ is $k$-rational and that $X$ has either isolated or lci singularities. Then $X$ is $k$-Du Bois. \qed
\end{theorem}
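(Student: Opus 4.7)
The plan is to combine the isolated-singularity characterizations in Theorems~\ref{thm2} and \ref{thm4} via the short exact sequence of coherent sheaves on $\hX$
\[
0 \to \Omega^p_{\hX}(\log E)(-E) \to \Omega^p_{\hX}(\log E) \to \Omega^p_{\hX}(\log E)|_E \to 0.
\]
By $k$-rationality, we have, for all $p\le k$, the identification $\Omega^p_X\cong \pi_*\Omega^p_{\hX}(\log E)$ together with the vanishings $H^q(\hX;\Omega^p_{\hX}(\log E))=0$ for $q>0$; the $k$-Du Bois conclusion requires (the same) $\mathcal{H}^0$ identification and the analogous vanishings with the $(-E)$-twist, namely $H^q(\hX;\Omega^p_{\hX}(\log E)(-E))=0$ for $q>0$.

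The long exact sequence of the displayed short exact sequence, combined with the $k$-rational vanishing, reduces the $k$-Du Bois statement to proving, for all $p\le k$, that $H^q(E;\Omega^p_{\hX}(\log E)|_E)=0$ for every $q\ge 1$, and that the natural restriction $H^0(\hX;\Omega^p_{\hX}(\log E))\to H^0(E;\Omega^p_{\hX}(\log E)|_E)$ is surjective. Both assertions are Hodge-theoretic: via Steenbrink's mixed Hodge theory for the link of an isolated singularity, the groups $H^q(E;\Omega^p_{\hX}(\log E)|_E)$ compute graded pieces of the Hodge filtration on the mixed Hodge structure on $H^{p+q}(X\setminus\{x\};\Cee)$ (equivalently on the link cohomology), and the vanishings imposed by $k$-rationality already force vanishing of precisely the Hodge-filtered pieces with filtration level $\le k$.

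The main obstacle is to make this Hodge-theoretic identification entirely precise, particularly the $q=0$ surjectivity, which amounts to showing that every section of $\Omega^p_{\hX}(\log E)|_E$ lifts to a section of $\Omega^p_X=\pi_*\Omega^p_{\hX}(\log E)$. An attractive alternative, in the spirit of Kov\'acs' original argument for $k=0$, is to exploit the splitting in $D_b^{\mathrm{coh}}(X)$ supplied by the hypothesis that $\psi^p\circ\phi^p$ is an isomorphism: this exhibits $\phi^p$ as a split monomorphism, and an octahedral-axiom computation identifies its cone, up to a shift, with $R\pi_*(\Omega^p_{\hX}(\log E)|_E)$. The proof is then completed by arguing directly, using the $k$-rational vanishing together with Steenbrink's description of the relevant Hodge-filtered pieces of the link cohomology, that this cone vanishes for $p\le k$.
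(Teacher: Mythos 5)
Your overall strategy for the isolated case is sound and rests, at bottom, on the same two inputs as the paper's Section~\ref{section3} argument: the isomorphism $\mathbb{H}^i(\hX;\Omega^\bullet_{\hX}(\log E))\cong H^i(L)\cong \mathbb{H}^i(\hX;\Omega^\bullet_{\hX}(\log E)|E)$ of Lemma~\ref{convzero}(ii), and the $E_1$-degeneration of the Hodge spectral sequence for $\Omega^\bullet_{\hX}(\log E)|E$. The difference is organizational: the paper works with the truncated complexes $\Omega^\bullet_{\hX}(\log E)/\sigma^{\ge k+1}$ and $\Omega^\bullet_{\hX}(\log E)(-E)/\sigma^{\ge k+1}$, packages the Hodge-theoretic input into the single injectivity statement of Lemma~\ref{linkexseq}, and then runs an induction on $k$; you work sheaf-by-sheaf with the restriction sequence for each $\Omega^p_{\hX}(\log E)$, which, if completed, would avoid the induction. (Like the paper's Section~\ref{section3}, you only address the isolated case; the lci case of the statement is handled elsewhere.)

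There is, however, a genuine gap at exactly the point you flag as ``the main obstacle,'' and it concerns not only the $q=0$ surjectivity. Your reduction requires, for all $p\le k$: (a) $\ell^{p,q}=\dim H^q(E;\Omega^p_{\hX}(\log E)|E)=0$ for all $q\ge 1$, and (b) surjectivity of $H^0(\hX;\Omega^p_{\hX}(\log E))\to H^0(E;\Omega^p_{\hX}(\log E)|E)$. You assert (a) on the grounds that these groups are $\Gr^p_FH^{p+q}(L)$ and that the $k$-rational vanishings ``already force'' them to vanish. But the only exact sequence you have introduced relating these groups to the hypothesis is the one of Lemma~\ref{convzero}(iii), which under $k$-rationality gives $\ell^{p,q}\cong b^{p,q+1}=\dim H^{q+1}(\hX;\Omega^p_{\hX}(\log E)(-E))$ for $q\ge 1$; that is, assertion (a) is literally equivalent to the conclusion you are trying to prove, so this route is circular as written. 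The non-circular argument is a comparison of filtrations: the stupid filtration on $\Omega^\bullet_{\hX}(\log E)$ induces a filtration $F_{\hX}$ on $\mathbb{H}^i\cong H^i(L)$ which is \emph{contained in} the Hodge filtration $F$ (the latter being the one induced from $\Omega^\bullet_{\hX}(\log E)|E$, whose spectral sequence degenerates); $k$-rationality kills $E_1^{p,q}=H^q(\hX;\Omega^p_{\hX}(\log E))$ for $p\le k$, $q>0$, hence forces $F^{k+1}_{\hX}H^i(L)=H^i(L)$ for $i\ge k+1$ (and $F^i_{\hX}H^i(L)=H^i(L)$ for $i\le k$), and the containment $F_{\hX}\subseteq F$ then yields both (a) and, after chasing the map of spectral sequences on the $(p,0)$ terms, (b). This filtration comparison is precisely what the paper isolates as Lemma~\ref{linkexseq}; without it your proof is incomplete. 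Two smaller points: the $\mathcal{H}^0$ identification $\Omega^p_X\cong R^0\pi_*\Omega^p_{\hX}(\log E)(-E)$ demanded by Theorem~\ref{thm2} is not ``the same'' as the one in Theorem~\ref{thm4}, though it follows by the evident sandwich argument from the injectivity of $R^0\pi_*\Omega^p_{\hX}(\log E)(-E)\to R^0\pi_*\Omega^p_{\hX}(\log E)$; and your alternative ``octahedral'' route is in substance the method of \cite{FL-DuBois} and is not carried out here, so it cannot be counted on to fill the gap.
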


Musta\c{t}\u{a} and Popa \cite{MP-rat} have  given an independent proof of Theorem~\ref{thm5} for the case of lci singularities.

\medskip

For hypersurface singularities, not necessarily isolated, Saito has defined an invariant $\widetilde\alpha_{X,x}$ as follows:

\begin{definition}\label{def6} If $X$ is a hypersurface in $\Cee^{n+1}$ and $x\in X$, define $\widetilde\alpha_{X,x}$, the \textsl{minimal exponent} of $X$ at $x$, to be the smallest root of $b_f(-s)/1+s$, where $b_f$ is the Bernstein polynomial of a local defining equation for $X$ at $x$. If $X$ has a unique singular point $x$, we abbreviate  $\widetilde\alpha_{X,x}$ by $\widetilde\alpha_X$. 
\end{definition}

 The $k$-Du Bois and $k$-rational properties are related to $\widetilde\alpha_{X,x}$ as follows \cite[Thm.\ 1]{JKSY-duBois}, \cite[Thm.\ 1.1.]{MOPW}:
 
 \begin{theorem}\label{thm7} The hypersurface $X$ has   $k$-Du Bois singularities in a neighborhood of $x$ $\iff$ $\widetilde\alpha_{X,x}\geq k+1$. \qed
 \end{theorem}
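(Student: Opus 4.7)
The plan is to reduce Theorem~\ref{thm7} to Saito's description of the Du Bois complex of a hypersurface in terms of the Hodge filtration on the filtered $\mathcal{D}$-module $\scrO_Y(*X)$, and then invoke the Musta\c{t}\u{a}--Popa comparison between the Hodge filtration and the naive pole-order filtration. Since $X$ is a hypersurface, I may work locally with $Y = \Cee^{n+1}$ and defining equation $f$, and the statement is local at $x$, so I can pass freely between $X$ and $\{f=0\}$.

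First, I would recall Saito's identification (up to appropriate shifts and Tate twists) of the graded pieces $\uOp_X$ of the Du Bois complex with shifts of $\mathrm{gr}^F_\bullet \scrO_Y(*X)$, where $F_\bullet$ is the Hodge filtration coming from the mixed Hodge module structure on $\scrO_Y(*X)$. Under this identification, the natural map $\phi^p\colon \Omega_X^p \to \uOp_X$ corresponds to the inclusion of the associated graded pieces of the pole-order filtration $P_\bullet$ into those of $F_\bullet$, where $P_k\scrO_Y(*X) = \scrO_Y((k+1)X)$. Consequently, $\phi^p$ is an isomorphism for all $p\le k$ if and only if $F_j \scrO_Y(*X) = P_j \scrO_Y(*X)$ in a neighborhood of $x$ for all $0\le j\le k$.

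Second, I would invoke Saito's theorem relating the $V$-filtration to the Bernstein--Sato polynomial together with its interpretation by Musta\c{t}\u{a}--Popa: the equality $F_k\scrO_Y(*X)=P_k\scrO_Y(*X)$ holds in a neighborhood of $x$ if and only if $\widetilde\alpha_{X,x}\geq k+1$. The key point is that the smallest root of $b_f(-s)/(s+1)$ measures exactly the first level at which the Hodge filtration deviates from the pole-order filtration on the localization. Combining these two ingredients then yields the equivalence claimed in Theorem~\ref{thm7}, as carried out in \cite{MOPW} and \cite{JKSY-duBois}.

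The main obstacle lies in the first step: making the comparison between $\uOp_X$ and $\mathrm{gr}^F_\bullet \scrO_Y(*X)$ precise, in particular tracking the correct shifts/twists and verifying that $\phi^p$ is compatible with the inclusion $P_\bullet\hookrightarrow F_\bullet$. Once this dictionary is in place, the remaining equivalence with $\widetilde\alpha_{X,x}\geq k+1$ is an application of the well-developed $V$-filtration/Hodge-ideal machinery, and does not require the isolated singularity hypothesis. I expect the arguments of \cite{MOPW} and \cite{JKSY-duBois} to follow essentially this outline, with the bulk of the technical work concentrated in the sheaf-theoretic translation of the $k$-Du Bois condition.
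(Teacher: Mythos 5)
First, a point of orientation: the paper does not prove Theorem~\ref{thm7} at all. It is stated with a \qed and attributed to \cite[Thm.~1]{JKSY-duBois} and \cite[Thm.~1.1]{MOPW}; the only version the paper actually establishes is the isolated case, reproved in Section~\ref{section6} by a completely different and more elementary route. There, the equivalence is obtained by combining Theorem~\ref{mainDB0} ($X$ is $k$-Du Bois $\iff$ $s_p=0$ for $p\le k$, proved via the Milnor fiber, a globalizing smoothing \`a la Scherk--Steenbrink, and the numerical identities of Theorem~\ref{relateDBs}) with Lemma~\ref{spectralchar}, which translates $s_p=0$ for $p\le k$ into $\widetilde\alpha_X\ge k+1$ through the Scherk--Steenbrink isomorphism $\Gr_V^bQ_f\cong \Gr_F^pH^n(M)(a)$ and the symmetry of the spectrum. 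Your $\mathcal{D}$-module outline is therefore not comparable to anything in the paper; it is a sketch of the strategy of the cited references, which handle the non-isolated case as well, whereas the paper's own argument buys an independent, Hodge-theoretically classical proof that is confined to isolated singularities.

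As a proof, your proposal has a genuine gap exactly where you locate it, and the gap is not a technicality of ``shifts and twists.'' The second step --- that $F_j\scrO_Y(*X)=P_j\scrO_Y(*X)$ near $x$ for $j\le k$ is equivalent to $\widetilde\alpha_{X,x}\ge k+1$ --- is indeed a known theorem on Hodge ideals (Musta\c{t}\u{a}--Popa, Saito). But the first step, the assertion that $\phi^p\colon\Omega_X^p\to\uOp_X$ is an isomorphism for all $p\le k$ if and only if the Hodge and pole-order filtrations agree up to level $k$, is essentially the content of Theorem~\ref{thm7} itself: Saito's identification of $\uOp_X$ with graded de Rham pieces of $\scrO_Y(*X)$ does not by itself match $\phi^p$ with the inclusion $P_\bullet\hookrightarrow F_\bullet$ on associated graded pieces, and the published proofs do not establish such a direct dictionary --- \cite{MOPW} and \cite{JKSY-duBois} each go through substantial additional arguments (degeneration of spectral sequences, Koszul-type resolutions, the microlocal $V$-filtration) to connect the two sides, and each paper in fact only handles one implication. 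Deferring this step to the very references that prove the theorem makes the argument circular rather than incomplete-but-fixable. If your goal is a proof in the setting of this paper (isolated singularities), the spectrum/Milnor-fiber route of Section~\ref{section6} is both self-contained and considerably shorter.
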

 
 In the first version of \cite{FL}, we showed:
 
 \begin{theorem}\label{thm8} If $X$ has an isolated hypersurface singularity at $x$, then $X$ is   $k$-rational $\iff$ $\widetilde\alpha_X> k+1$. \qed
 \end{theorem}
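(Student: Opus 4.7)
Combining Theorem~\ref{thm5} with Theorem~\ref{thm7}, a $k$-rational $X$ is necessarily $k$-Du Bois, hence satisfies $\widetilde\alpha_X\ge k+1$; conversely $\widetilde\alpha_X>k+1$ implies $\widetilde\alpha_X\ge k+1$ and so $X$ is $k$-Du Bois. In both directions the content of Theorem~\ref{thm8} therefore reduces to the following: given a $k$-Du Bois isolated hypersurface singularity $(X,x)$, show that $X$ is $k$-rational if and only if $k+1$ does not lie in the Steenbrink spectrum of $(X,x)$, equivalently $\widetilde\alpha_X\ne k+1$.

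Choose a good log resolution $\pi\colon \hX\to X$ with reduced SNC exceptional divisor $E$. The basic comparison is via the short exact sequences
\[
0\to \Omega^p_{\hX}(\log E)(-E)\to \Omega^p_{\hX}(\log E)\to \Omega^p_{\hX}(\log E)|_E\to 0,\qquad 0\le p\le k.
\]
Under the $k$-Du Bois hypothesis the left-hand cohomology vanishes in positive degree by Theorem~\ref{thm2}, so via the long exact sequences together with Theorem~\ref{thm4}, the $k$-rational condition becomes equivalent to $H^q(E;\Omega^p_{\hX}(\log E)|_E)=0$ for all $q>0$ and all $0\le p\le k$. Since $E$ is projective, these groups carry natural mixed Hodge structures.

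The heart of the matter is to identify this cohomology on $E$ with the eigenvalue-$1$ part of graded pieces of the Hodge filtration on the vanishing cohomology $H^n(F;\Cee)$ of the Milnor fiber $F$, where $n=\dim X$. Concretely, Steenbrink's filtered de Rham calculations yield degenerating spectral sequences $H^q(\hX;\Omega^p_{\hX}(\log E)(-E))\Rightarrow \mathrm{Gr}^p_F H^{p+q}_x(X;\Cee)$ and $H^q(\hX;\Omega^p_{\hX}(\log E))\Rightarrow \mathrm{Gr}^p_F H^{p+q}(L;\Cee)$ for the link $L$; the Wang-type sequence linking $L$, local cohomology at $x$, and $H^\ast(F;\Cee)$ matches their difference with the unipotent summand of $\mathrm{Gr}^p_F H^{p+q}(F;\Cee)$. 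Since the integer entries of the Steenbrink spectrum encode precisely this unipotent contribution, an entry equal to $k+1$ (the only integer value not already excluded by $\widetilde\alpha_X\ge k+1$) corresponds to nonzero $H^{n-k}(E;\Omega^k_{\hX}(\log E)|_E)$, and its absence simultaneously yields $\widetilde\alpha_X>k+1$ and the final vanishing needed for $k$-rationality.

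The main obstacle will be organizing Steenbrink's machinery so that the spectrum value $k+1$ lands in the correct graded piece at $p=k$: that is, tracking Hodge, weight, eigenvalue, and degree conventions consistently across the three cohomologies and through the residue filtration on $\Omega^\bullet_{\hX}(\log E)|_E$. Once this bookkeeping is in place, both implications of Theorem~\ref{thm8} follow at once.
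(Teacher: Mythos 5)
Your reduction is sound and is essentially the route the paper takes: granting Theorems~\ref{thm5} and \ref{thm7}, both directions of Theorem~\ref{thm8} collapse to the claim that a $k$-Du Bois isolated hypersurface singularity is $k$-rational if and only if $m_{k+1}=0$, i.e.\ $k+1\notin\Sp(X,x)$. The passage from $k$-rationality to the vanishing of $H^q(E;\Omega^p_{\hX}(\log E)|_E)$ for $p\le k$, $q>0$ under the $k$-Du Bois hypothesis is also correct. But the step you label ``the heart of the matter'' --- and then defer as ``the main obstacle'' --- is the entire content of the theorem, and the one concrete assertion you make about it is wrong. The spectral value $k+1$ corresponds, via the Scherk--Steenbrink isomorphism $\Gr_V^bQ_f\cong\Gr_F^pH^n(M)(a)$ with $b=n-p+a$, to the unipotent part of $\Gr_F^{n-k}H^n(M)$, i.e.\ it contributes to $s_{n-k}$, not to a graded piece ``at $p=k$.'' The link invariant it controls is $\ell^{k,n-k-1}=\dim\Gr_F^kH^{n-1}(L)=\dim H^{n-k-1}(E;\Omega^k_{\hX}(\log E)|_E)$ (equivalently, by Serre duality, $\ell^{n-k,k}$), whereas the group you name, $H^{n-k}(E;\Omega^k_{\hX}(\log E)|_E)=\Gr_F^kH^n(L)=\ell^{k,n-k}$, vanishes automatically once $X$ is $k$-Du Bois, since $\ell^{k,n-k}\le s_k=0$. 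So as written your criterion would declare every $k$-Du Bois singularity $k$-rational.

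What is missing is the mechanism that moves the spectral datum from $\Gr_F^{n-k}H^n(M)$ down to $\Gr_F^k$ of the link cohomology. In the paper this is the self-dual four-term exact sequence of mixed Hodge structures $0\to H^{n-1}(L)\to H^n(M,L)\to H^n(M)\to H^n(L)\to 0$ together with the duality $H^n(M,L)\cong (H^n(M))\spcheck(-n)$ (Proposition~\ref{boundlink}); strictness gives $s_{n-p}-s_p=\ell^{p,n-p-1}-\ell^{p,n-p}$ and $\ell^{p,n-p}\le s_p$, whence under $k$-Du Bois one gets $s_{n-p}=\ell^{p,n-p-1}$ for $p\le k$, and then Lemma~\ref{somespec} identifies $\sum_{p\le k}s_{n-p}$ with $m_{k+1}$ once $\sum_{p\le k}s_p=0$. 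Your ``Wang-type sequence'' gestures at this, but without the duality statement and the strictness argument the claimed correspondence between $m_{k+1}$ and a single cohomology group on $E$ is unsubstantiated, and the index bookkeeping is precisely where it fails. You should either import Proposition~\ref{boundlink} and Lemma~\ref{somespec} explicitly, or carry out the equivalent computation; as it stands the proposal restates the theorem at the critical step.
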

 
 \begin{corollary}\label{cor9} Suppose that $X$ has an isolated hypersurface singularity at $x$. If $X$ is $k$-Du Bois, then $X$ is $(k-1)$-rational. \qed
 \end{corollary}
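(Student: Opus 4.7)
The plan is to deduce Corollary~\ref{cor9} directly from the two theorems immediately preceding it by chasing inequalities on the minimal exponent $\widetilde\alpha_X$. Specifically, Theorem~\ref{thm7} translates the $k$-Du Bois hypothesis into the numerical statement $\widetilde\alpha_X \geq k+1$, and Theorem~\ref{thm8} translates the desired $(k-1)$-rational conclusion into the numerical statement $\widetilde\alpha_X > k$. So the entire proof consists of noting that $k+1 > k$.

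In more detail: First I would invoke Theorem~\ref{thm7} applied to the isolated hypersurface singularity $(X,x)$ with parameter $k$. Since $X$ is $k$-Du Bois by assumption, this gives $\widetilde\alpha_X \geq k+1$. Next, I would apply Theorem~\ref{thm8} with $k$ replaced by $k-1$; the criterion for $(k-1)$-rationality reads $\widetilde\alpha_X > (k-1)+1 = k$. Combining the two, $\widetilde\alpha_X \geq k+1 > k$, so the $(k-1)$-rational criterion is satisfied.

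There is really no obstacle, since both ingredients are stated in the excerpt: the only thing to be careful about is the strict-versus-non-strict inequality distinction between the $k$-Du Bois and $k$-rational characterizations (``$\geq k+1$'' vs.\ ``$> k+1$''). This gap of size one in the minimal exponent is exactly what provides the one-step loss when passing from $k$-Du Bois to rationality, and explains why the conclusion is $(k-1)$-rational rather than $k$-rational. It also shows that the corollary is sharp in general: equality $\widetilde\alpha_X = k+1$ can occur (e.g.\ for suitable weighted homogeneous isolated hypersurface singularities), and then $X$ is $k$-Du Bois but fails to be $k$-rational, while it is always $(k-1)$-rational.
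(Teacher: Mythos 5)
Your proof is correct and is exactly the argument the paper intends: the corollary is stated with a \qed precisely because it follows immediately by combining Theorem~\ref{thm7} ($k$-Du Bois $\iff \widetilde\alpha_X \ge k+1$) with Theorem~\ref{thm8} applied at level $k-1$ ($(k-1)$-rational $\iff \widetilde\alpha_X > k$), and $k+1 > k$. Your remark on sharpness (the case $\widetilde\alpha_X = k+1$, which the paper later calls $k$-liminal) is also consistent with Section~\ref{section6}.
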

 
 \begin{remark} Saito (appendix to \cite{FL-DuBois}) and  Musta\c{t}\u{a} and Popa \cite{MP-rat}  have proved Theorem~\ref{thm8} and hence Corollary~\ref{cor9}
 without assuming isolated singularities.
\end{remark}

One goal of this paper is to give a short proof of Theorem~\ref{thm5} for isolated singularities and to generalize Corollary~\ref{cor9} to the isolated lci case.  A key point is to find the appropriate analogues of Theorems~\ref{thm7} and \ref{thm8} in the lci case.  Saito's invariant $\widetilde\alpha_X$, which is defined for hypersurfaces and takes on positive rational values, is replaced by two integer invariants,  $\sum_{p=0}^ks_p$ and $\sum_{p=0}^ks_{n-p}$. Here, $s_p = \dim\Gr^p_FH^n(M)$ is the dimension of the $p^{\text\rm{{th}}}$ graded piece of the Hodge filtration on the Milnor fiber $M$ of $X$, and is independent of the choice of a smoothing in the lci case.  We show the following (cf. Proposition~\ref{nminuspvsp} and Corollary~\ref{nminuspvsp2}):
 
 \begin{theorem}\label{thm10} \qquad $\Dis \sum_{p=0}^{k-1}s_{n-p}\le \sum_{p=0}^ks_p\le \sum_{p=0}^ks_{n-p}.$
 \end{theorem}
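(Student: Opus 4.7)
The plan is to recast the inequalities as bounds on the Hodge filtration $F^\bullet$ of $H^n(M)$. Since $F^0 H^n(M) = H^n(M)$ and $F^{n+1} H^n(M) = 0$, one has
\[
\sum_{p=0}^k s_p = \dim H^n(M) - \dim F^{k+1}, \qquad \sum_{p=0}^k s_{n-p} = \dim F^{n-k},
\]
so Theorem~\ref{thm10} becomes
\[
\dim F^{k+1} + \dim F^{n-k+1} \,\le\, \dim H^n(M) \,\le\, \dim F^{k+1} + \dim F^{n-k}.
\]
Since dimensions are additive across the weight filtration $W_\bullet$, it suffices to verify these on each weight-graded piece $\Gr^W_w H^n(M)$, a pure Hodge structure of weight $w$. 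For any pure Hodge structure of weight $w$, Hodge symmetry yields the identity $\dim F^p + \dim F^{w+1-p} = \dim$; specialising to $p=k+1$ gives $\dim F^{k+1} + \dim F^{w-k} = \dim \Gr^W_w$.

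The right-hand inequality follows from the standard weight bound $W_{n-1} H^n(M) = 0$, a general feature of the cohomology of a smooth variety of dimension $n$ such as $M$. Indeed, for $w \ge n$ we have $w-k \ge n-k$, so $F^{w-k} \subseteq F^{n-k}$, giving $\dim F^{k+1} + \dim F^{n-k} \ge \dim F^{k+1} + \dim F^{w-k} = \dim \Gr^W_w$; summing over $w$ yields the desired inequality.

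The left-hand inequality is more delicate: the analogous weight-by-weight analysis would require $F^{n-k+1} \subseteq F^{w-k}$, i.e., $w \le n+1$, which can fail. I would therefore supplement the Hodge-filtration argument with a cohomological description of $s_p$ and $s_{n-p}$ via a log resolution $\pi \colon \hat X \to X$ with reduced exceptional divisor $E$, expressing them in terms of the sheaves $\Omega_{\hat X}^p(\log E)(-E)$ and $\Omega_{\hat X}^p(\log E)$ whose higher cohomology controls the failure of the $k$-Du Bois and $k$-rational conditions (cf.\ Theorems~\ref{thm2} and~\ref{thm4}). Serre duality on $\hat X$,
\[
H^q(\hat X, \Omega_{\hat X}^p(\log E)(-E)) \simeq H^{n-q}(\hat X, \Omega_{\hat X}^{n-p}(\log E))^{\vee},
\]
together with appropriate long exact sequences comparing these two families of sheaves, should control the excess contributions from weight-pieces $w \ge n+2$.

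The main obstacle is establishing this cohomological formula for $s_p$ in the isolated lci (not necessarily hypersurface) case: Steenbrink's hypersurface formulas must be extended, and the ``new relations between invariants of isolated lci singularities'' referenced in the abstract are likely exactly what is needed to close this gap.
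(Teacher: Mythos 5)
Your reduction of the two inequalities to $\dim F^{k+1}+\dim F^{n-k+1}\le\dim H^n(M)\le\dim F^{k+1}+\dim F^{n-k}$, and the identity $\dim F^{k+1}+\dim F^{w-k}=\dim\Gr^W_w$ on a pure piece of weight $w$, are both fine. The gap is the weight bound you then invoke. The mixed Hodge structure on $H^n(M)$ used here is Steenbrink's \emph{limit} mixed Hodge structure attached to a choice of smoothing (in the paper it is realized as $\mathbb{H}^n(\cE;K^\bullet)$ on a semistable model); it is not Deligne's mixed Hodge structure of the smooth variety $M$, and the bound $W_{n-1}H^n(M)=0$ fails for it. Already for $n=1$: a plane curve singularity whose monodromy has a size-two Jordan block at an eigenvalue $\lambda\neq 1$ has $\Gr^W_0H^1(M)\neq0$, because on the eigenvalue-$\lambda$ part ($\lambda\neq1$) the weight filtration is centered at $n$ and $N^j$ identifies $\Gr^W_{n+j}$ with $\Gr^W_{n-j}$. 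On such low-weight pieces the inclusion $F^{w-k}\subseteq F^{n-k}$ reverses, so your argument for the right-hand inequality collapses exactly where it matters. For the left-hand inequality you explicitly leave the decisive step open (``should control the excess contributions''), so there is no proof there either.

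The paper avoids weight estimates on $H^n(M)$ altogether. Both inequalities come from four-term self-dual exact sequences of mixed Hodge structures: for the right-hand one, $0\to H^{n-1}(L)\to H^n(M,L)\to H^n(M)\to H^n(L)\to0$ with $H^n(M,L)\cong H^n(M)\spcheck(-n)$ (Proposition~\ref{boundlink}), which by strictness gives $s_{n-p}-s_p=\ell^{p,n-p-1}-\ell^{n-p,p-1}$; for the left-hand one, the analogous sequence $0\to H^n(\cL)\to H^n(M)\to H^n_c(M)(-1)\to H^{n+1}(\cL)\to0$ for the link $\cL$ of the total space $\cX$ of a smoothing (Lemma~\ref{Wang}, Corollary~\ref{exseqMilnorcE}), which gives $s_p-s_{n-(p-1)}=\ell^{p,n-p}(\cX)-\ell^{n-(p-1),p-1}(\cX)$. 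Summing over $p\le k$ and applying Lemma~\ref{Steenbrinkinequal} --- whose input is Goresky--MacPherson semipurity, i.e.\ a weight bound on \emph{link} cohomology below the middle degree, not on $H^n(M)$ --- yields both inequalities. If you want to rescue your plan, the correct substitute for ``$M$ is smooth of dimension $n$'' is precisely this semipurity statement, transported to $H^n(M)$ through the two exact sequences; that transport is what the paper's computation carries out.
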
 
 
 Then Theorems~\ref{thm7} and \ref{thm8} are replaced by the following (Theorem~\ref{mainDB0} and Theorem~\ref{mainrat}):
 
 \begin{theorem}\label{thm11} Let $X$ be an  isolated lci singulariity.
 \begin{enumerate}
 \item[\rm(i)] $X$ is $k$-Du Bois $\iff$  $\Dis\sum_{p=0}^ks_p =0$.
  \item[\rm(ii)] $X$ is $k$-rational $\iff$  $\Dis\sum_{p=0}^ks_{n-p} =0$
 \end{enumerate}  
 \end{theorem}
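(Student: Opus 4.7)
The plan is to combine the cohomological characterizations of $k$-Du Bois and $k$-rational from Theorems~\ref{thm2} and \ref{thm4} with a Steenbrink-type identification of the Hodge numbers $s_p=\dim\Gr^p_F H^n(M)$ with cohomologies on the log resolution $\hX$. The central input is the formula
$$s_p=\dim H^{n-p}(\hX,\Omega^p_{\hX}(\log E)(-E)),\qquad 0\le p\le n,$$
together with the accompanying automatic vanishing $H^q(\hX,\Omega^p_{\hX}(\log E)(-E))=0$ for $q>0$ and $q\ne n-p$. For isolated hypersurface singularities this is Steenbrink's classical computation; for isolated lci the extension is the Hodge-theoretic heart of the argument and proceeds via a log resolution of the total space of a smoothing $\cX\to\Delta$, exploiting the unobstructedness of lci deformations and the fact that the reduced cohomology of the Milnor fiber is concentrated in degree $n$.

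Given this identification, part (i) is essentially immediate. By Theorem~\ref{thm2}, $X$ is $k$-Du Bois iff $H^q(\hX,\Omega^p_{\hX}(\log E)(-E))=0$ for all $p\le k$ and $q>0$. Using the automatic vanishing in degrees $q\ne n-p$, this reduces to $H^{n-p}(\hX,\Omega^p_{\hX}(\log E)(-E))=0$ for $p\le k$, and by the displayed formula this is the condition $s_p=0$ for $p\le k$; since the $s_p$ are nonnegative, this is equivalent to $\sum_{p=0}^k s_p=0$.

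For part (ii), I would exploit the perfect pairing $\Omega^p_{\hX}(\log E)\otimes\Omega^{n-p}_{\hX}(\log E)(-E)\to\omega_{\hX}$ coming from wedge product (noting that $\Omega^n_{\hX}(\log E)(-E)=\omega_{\hX}$), combined with local duality for the proper morphism $\pi\colon\hX\to X$. Since $X$ is Gorenstein (being lci), this yields
$$\dim H^q(\hX,\Omega^p_{\hX}(\log E))=\dim H^{n-q}(\hX,\Omega^{n-p}_{\hX}(\log E)(-E))\quad\text{for } q>0.$$
Substituting into the $s_p$-formula gives $s_{n-p}=\dim H^{n-p}(\hX,\Omega^p_{\hX}(\log E))$, and the accompanying vanishing in all other positive degrees transfers via the same duality. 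The argument of (i), now applied via Theorem~\ref{thm4}, then establishes that $X$ is $k$-rational iff $\sum_{p=0}^k s_{n-p}=0$. The main obstacle throughout is the lci extension of Steenbrink's identification, with its attached automatic vanishing of off-diagonal $H^q$'s; once this Hodge-theoretic input is in place, parts (i) and (ii) are parallel consequences linked by the duality above.
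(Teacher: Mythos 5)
Your central identity $s_p=\dim H^{n-p}(\hX,\Omega^p_{\hX}(\log E)(-E))$, i.e.\ $s_p=b^{p,n-p}$, is false, and the argument collapses with it. Already for $p=0$ it gives $s_0=\dim H^n(\hX,\scrO_{\hX}(-E))=0$ (the fibers of $\pi$ have dimension at most $n-1$), whereas for instance the cone over a smooth plane quartic has $s_0=1$; the correct statement, due to Steenbrink, is $s_0=b^{0,n-1}$, in cohomological degree $n-1$ rather than $n$. The accompanying claim that $H^q(\hX,\Omega^p_{\hX}(\log E)(-E))$ is automatically concentrated in the single degree $q=n-p$ is also false: Theorem~\ref{dBzero} allows two nonzero Du Bois invariants for each $p$, in degrees $q=n-p$ and $q=n-p-1$, and in general $s_p$ equals no single $b^{p,q}$ --- the actual relation (Theorem~\ref{relateDBs}) is an alternating-sum recursion involving both the $b^{p,q}$ and the link invariants $\ell^{p,q}$, and the paper remarks that no direct formula is known. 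The true Steenbrink computation you are thinking of identifies $s_k$ with $\dim H^{n-k}(K^k)$ for the relative log complex $K^\bullet$ supported on the exceptional divisor $\cE$ of a semistable resolution of the \emph{total space of a smoothing} (Corollary~\ref{exseqMilnorcE}), not with cohomology of $\Omega^p_{\hX}(\log E)(-E)$ on a resolution of $X$ itself; conflating these two spaces is where the proposal goes wrong. The duality step in (ii) is likewise misstated: the Serre/local dual of $H^q(\hX;\Omega^p_{\hX}(\log E)(-E))$ is the local cohomology group $H^{n-q}_E(\hX;\Omega^{n-p}_{\hX}(\log E))$ (see Definition~\ref{def-inv}), which differs from $H^{n-q}(\hX;\Omega^{n-p}_{\hX}(\log E))$ by link contributions, so even granting your $s_p$-formula the identity $s_{n-p}=\dim H^{n-p}(\hX,\Omega^p_{\hX}(\log E))$ would not follow.

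For comparison, the paper's proof runs along entirely different lines. For (i), the forward implication globalizes the smoothing (Scherk, Steenbrink) to obtain a surjection $H^n(Y_t)\to H^n(M)$ of mixed Hodge structures and uses the $k$-Du Bois property of the global special fiber to kill $\Gr_F^pH^n(M)$ for $p\le k$; the converse is an induction combining the extra vanishing lemma (Lemma~\ref{extravan}), the inequality $\sum_{p=0}^{k-1}s_{n-p}\le\sum_{p=0}^k s_p$ (Corollary~\ref{nminuspvsp2}), the bounds $\ell^{p,n-p}\le s_p$ and $\ell^{p,n-p-1}\le s_{n-p}$ (Proposition~\ref{boundlink}), and the identity of Theorem~\ref{relateDBs} to force $b^{k,n-k-1}=0$. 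Part (ii) is then deduced by comparing $\sum s_{n-p}$ with $\sum s_p$ and with the single link invariant $\ell^{k,n-k-1}$ (Theorem~\ref{mainrat}), not by a duality between the two log complexes.
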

 \begin{remark}
 The case $k=0$ reads: {\it $X$ is Du Bois $\iff$ $s_0=0$,  and $X$ is rational $\iff$ $s_n=0$}. This case is due to Steenbrink and Saito. 
 \end{remark}
 Combining the two results, we immediately see: 
 
  \begin{corollary}\label{cor12} Suppose that $X$ has an isolated lci singularity at $x$. If $X$ is $k$-Du Bois, then $X$ is $(k-1)$-rational. \qed
 \end{corollary}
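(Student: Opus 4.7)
The plan is to derive Corollary~\ref{cor12} as a purely formal consequence of Theorem~\ref{thm10} together with the two halves of Theorem~\ref{thm11}, so no new geometric input is needed at this stage. The argument rests on the observation that the Hodge numbers $s_p$ of the Milnor fiber are nonnegative integers, so the vanishing of a sum of them forces each summand to vanish.

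First I would translate the hypothesis into numerical data: since $X$ is assumed $k$-Du Bois, Theorem~\ref{thm11}(i) gives
$$\sum_{p=0}^{k} s_p \;=\; 0.$$
Next I would feed this into the left-hand inequality of Theorem~\ref{thm10}, which yields
$$\sum_{p=0}^{k-1} s_{n-p} \;\le\; \sum_{p=0}^{k} s_p \;=\; 0.$$
Since each $s_{n-p}\ge 0$, this forces $\sum_{p=0}^{k-1} s_{n-p}=0$. Finally, applying Theorem~\ref{thm11}(ii) with $k$ replaced by $k-1$, this vanishing is precisely the criterion for $X$ to be $(k-1)$-rational, which is the desired conclusion.

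There is no obstacle internal to this corollary: once Theorems~\ref{thm10} and \ref{thm11} are established, the proof is a one-line assembly. The genuine mathematical content has been packaged into the inequality of Theorem~\ref{thm10}, which is precisely the mechanism by which vanishing of the ``low-$p$'' Hodge pieces $s_0,\dots,s_k$ propagates to vanishing of the ``high-$p$'' Hodge pieces $s_{n},\dots,s_{n-k+1}$, with the expected loss of one step. Accordingly, if I were to push the argument further, the interesting question would not be the formal deduction above but rather whether the inequality in Theorem~\ref{thm10} is ever strict, which would measure the precise gap between the $k$-Du Bois and $(k-1)$-rational conditions for isolated lci singularities.
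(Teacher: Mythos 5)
Your proof is correct and is essentially the paper's own argument: the paper deduces Corollary~\ref{cor12} by "combining the two results," i.e.\ exactly by using Theorem~\ref{thm11}(i) to get $\sum_{p=0}^{k}s_p=0$, the left inequality of Theorem~\ref{thm10} (Corollary~\ref{nminuspvsp2}) to force $\sum_{p=0}^{k-1}s_{n-p}=0$, and Theorem~\ref{thm11}(ii) to conclude $(k-1)$-rationality. Nothing to add.
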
  
 
 Chen-Dirks-Musta\c{t}\u{a} have recently proved  Corollary~\ref{cor12} without assuming that the singularities of $X$ are isolated \cite{ChenDirksM}. The basic idea of the proof is as follows.   Musta\c{t}\u{a} and Popa \cite[Theorem F]{MP-loc} have defined an integer invariant $p(X)$ in terms of filtrations and have showed that, for $X$ a local complete intersection,  $p(X) \ge k$ $\iff$ $X$ is $k$-Du Bois. There is also the analogue $\widetilde\alpha_X$ of  the minimal exponent as defined by Chen-Dirks-Musta\c{t}\u{a}-Olano  \cite{CDMO}.  If $X$ is a local complete intersection of codimension $r$, then it follows from \cite{MP-loc} that $X$ is $k$-Du Bois $\iff$   $\widetilde\alpha_X \ge k+r$ and from \cite{ChenDirksM} that $X$ is $k$-rational $\iff$   $\widetilde\alpha_X > k+ r$.  Combining these two results gives a proof of Corollary~\ref{cor12} in the non-isolated case. 
 
Although the methods of this paper only apply to the case of isolated singularities, they have the added benefit  of exhibiting  the following   strong  numerical connection between   higher Du Bois and rational singularities in the isolated lci case. For example, we show the following:
 
 \begin{theorem}\label{Thm-improvesing}  Let $(X,x)$ be an isolated lci of dimension $n$.
 \begin{enumerate}
 \item[\rm(i)] If $X$ is $k$-rational, then $X$ is $k$-Du Bois. More precisely, $X$ is $k$-rational $\iff$  $X$ is $k$-Du Bois and  $\ell^{k,n-k-1}=0$;   
  \item[\rm(ii)] $X$ is $k$-Du Bois   $\iff$  $X$ is $(k-1)$-rational and $b^{k,n-k-1}=0$,
 \end{enumerate}
 where $\ell^{p,q}$ and $b^{p,q}$ are the link and Du Bois invariants respectively (see Definition \ref{def-inv}).  \qed
 \end{theorem}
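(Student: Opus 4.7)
The plan is to extract everything from the numerical characterizations of Theorem~\ref{thm11}, combined with the inequalities of Theorem~\ref{thm10}, together with an identification of the ``boundary'' graded pieces $s_{n-k}$ and $s_k$ with link and Du Bois invariants. Unpacking Theorem~\ref{thm11}, $X$ is $k$-Du Bois iff $s_0=s_1=\cdots=s_k=0$, and $X$ is $k$-rational iff $s_n=s_{n-1}=\cdots=s_{n-k}=0$. The second inequality of Theorem~\ref{thm10}, $\sum_{p=0}^k s_p\le\sum_{p=0}^k s_{n-p}$, then immediately shows that $k$-rational $\Rightarrow$ $k$-Du Bois, while the first inequality $\sum_{p=0}^{k-1}s_{n-p}\le\sum_{p=0}^k s_p$ shows that $k$-Du Bois $\Rightarrow$ $(k-1)$-rational.

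With these implications in hand, I would isolate the ``extra'' vanishing needed in each direction. For (i), if $X$ is $k$-Du Bois (so $s_0=\cdots=s_k=0$), the first inequality of Theorem~\ref{thm10} already forces $s_n=\cdots=s_{n-k+1}=0$, so the sole remaining obstruction to $k$-rationality is the single condition $s_{n-k}=0$. For (ii), if $X$ is $(k-1)$-rational, then by Theorem~\ref{thm5} applied at level $k-1$ (itself a consequence of Theorem~\ref{thm10}) $X$ is $(k-1)$-Du Bois, so $s_0=\cdots=s_{k-1}=0$, and the sole remaining obstruction to $k$-Du Bois is $s_k=0$.

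The substantive remaining step is to match these two extremal graded pieces with the invariants introduced in Definition~\ref{def-inv}: namely $s_{n-k}=\ell^{k,n-k-1}$ and $s_k=b^{k,n-k-1}$. I would derive these identifications from the short exact sequence
\[
0\to \Omega^p_{\hX}(\log E)(-E)\to \Omega^p_{\hX}(\log E)\to \Omega^p_E(\log)\to 0
\]
on the log resolution, together with the comparison of the Hodge filtration on the Milnor fiber $M$ with the cohomology of the link $L$ and of the Deligne-Du Bois complex at $x$ (via the specialization sequence for an lci smoothing, which makes sense in the isolated lci case). Under the $k$-Du Bois hypothesis the cohomology groups $H^q(\hX;\Omega^p_{\hX}(\log E)(-E))$ vanish for $p\le k$, $q>0$, so the long exact sequence on cohomology collapses and identifies the relevant summand of $\Gr^{n-k}_F H^n(M)$ with the single surviving link-cohomology contribution $\ell^{k,n-k-1}$; dually, under the $(k-1)$-rational hypothesis the groups $H^q(\hX;\Omega^p_{\hX}(\log E))$ vanish for $p\le k-1$, $q>0$, and the same package identifies $\Gr^k_F H^n(M)$ with the ``non-classical'' piece of the Du Bois complex that defines $b^{k,n-k-1}$.

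The main obstacle is precisely this last step: the bookkeeping of mixed Hodge structures needed to identify $s_{n-k}$ and $s_k$ with $\ell^{k,n-k-1}$ and $b^{k,n-k-1}$. This amounts to tracking which graded piece of $H^n(M)$ survives after all the vanishings implied by the weaker hypothesis, and will be the content of the technical propositions later in the paper. Once this identification is established, Theorem~\ref{Thm-improvesing} follows formally from Theorems~\ref{thm10} and~\ref{thm11}.
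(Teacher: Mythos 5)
Your reduction of the theorem to the numerical characterizations, plus the two identifications $s_{n-k}=\ell^{k,n-k-1}$ (under the $k$-Du Bois hypothesis) and $s_k=b^{k,n-k-1}$ (under the $(k-1)$-rational hypothesis), is correct and is essentially how the paper assembles the statement from Theorems~\ref{maindB} and~\ref{mainrat}. The first identification also goes through the way you indicate: Proposition~\ref{boundlink}(ii) gives $s_{n-k}-s_k=\ell^{k,n-k-1}-\ell^{k,n-k}$ from the self-dual sequence $0\to H^{n-1}(L)\to H^n(M,L)\to H^n(M)\to H^n(L)\to 0$, and the $k$-Du Bois hypothesis kills $s_k$ and $\ell^{k,n-k}\le s_k$, leaving $s_{n-k}=\ell^{k,n-k-1}$.

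The gap is in the second identification. The exact sequence you propose to use, $0\to\Omega^p_{\hX}(\log E)(-E)\to\Omega^p_{\hX}(\log E)\to\Omega^p_{\hX}(\log E)|E\to 0$, lives entirely on $\hX$ and only relates $b^{p,q}$, $\dim H^q(\hX;\Omega^p_{\hX}(\log E))$ and $\ell^{p,q}$; it never sees the Milnor fiber, so no amount of collapsing its long exact sequence can produce $s_k=b^{k,n-k-1}$. The Milnor fiber enters only through the total space of a smoothing, and the bridge the paper builds is Theorem~\ref{relateDBs}: an alternating-sum identity for $\chi_{\ge n-k-1}$ extracted from three interlocking exact sequences on the log resolution $\hcX$ of a semistable smoothing (restriction to $\cE$, restriction to $\hX$ twisted by $-\cE$, and the Poincar\'e residue sequence for $\Omega^k_{\hcX}(\log(\hX+\cE))$), combined with Steenbrink's identification of $\Gr_F^kH^n(M)$ with $H^{n-k}$ of $K^\bullet=\Omega^\bullet_{\hcX/\Delta}(\log(\hX+\cE))|\cE$ in Lemma~\ref{Wang}. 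Only after specializing that identity under the $(k-1)$-rational hypothesis --- which kills all lower link and Milnor terms and, via Lemma~\ref{extravan}, forces $b^{k,n-k}=0$ --- does one get $b^{k,n-k-1}=s_k$. So the route you sketch is viable, but the step you defer as bookkeeping is the substantive content of Section~\ref{section4}, and it requires the auxiliary geometry of $\hcX$, not just of $\hX$.
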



The contents of this paper are as follows. In Section~\ref{section2}, we collect some basic facts about the Hodge theory of resolutions of isolated singularities. We further study the invariants $\sum_{p=0}^ks_p$ and $\sum_{p=0}^ks_{n-p}$ in detail and prove Proposition~\ref{nminuspvsp} as in Steenbrink \cite{SteenbrinkDB}. Section~\ref{section3} gives a short proof of Theorem~\ref{thm5} in the isolated case, following the strategy of Steenbrink's proof for the case $k=0$ \cite[Proposition 3.7]{Steenbrink}. The remainder of the paper is concerned with the isolated lci case.  Section~\ref{section4} is devoted to an analysis of Steenbrink's construction of the mixed Hodge structure on the Milnor fiber. As a corollary, we obtain further relations among the basic numerical invariants (Theorem~\ref{relateDBs}). With these preliminaries, we prove the main theorems characterizing $k$-Du Bois and $k$-rational isolated lci singularities in Section~\ref{section5}. We also prove a new inverse of adjunction type result (Corollary~\ref{invadj}). In Section~\ref{section6}, we specialize to the case of isolated hypersurface singularities. Our main interest is in those which are $k$-Du Bois but not $k$-rational, or equivalently those for which $\widetilde\alpha_X= k+1$. A result of  Dimca-Saito \cite[\S4.11]{DS} shows that these singularities, which we term \textsl{$k$-liminal}, have especially appealing properties. These results, for $k=1$, are used in an essential way in \cite{FL}.

\subsection*{Notations and Conventions} We work in the analytic category. $X$ will always denote a good Stein representative for the germ of the isolated singularity $(X,x)$ of dimension $n\ge 2$, and $\pi \colon \hX \to X$ will denote a log resolution, with exceptional divisor $E$.  In particular, we will always assume that $X$ is contractible and hence that $E$ is a deformation retract of $\hX$. Let $U = X-\{x\} =\hX -E$ be the link of the singularity. Then $U$ has the homotopy type of an oriented $(2n-1)$-manifold $L$.

\subsection*{Acknowledgements} It is a pleasure to thank M. Musta\c{t}\u{a}, M. Popa, and M. Saito for stimulating correspondence during the preparation of this paper. We would also like to thank the referee for a careful reading of the paper and several helpful suggestions. 

\section{Some general results}\label{section2}

\subsection{Some basic Hodge theory} Following our conventions, $X$ is a good Stein representative of an isolated singularity of dimension $n \ge 2$, $\pi\colon \hX \to X$ is a log resolution with exceptional divisor $E$, and $U = X-\{x\} = \hX - E$. If $L$ is the link of the singularity, then   $U$ and $L$ have  the same homotopy type. We begin by recalling some results which hold in slightly greater generality before specializing to the lci case. 
 
\begin{lemma}\label{normalcrossingbasics}  Let $E$ be a simple normal crossing divisor in a complex manifold $\hX$.  \begin{enumerate}
\item[\rm(i)] Let $\tau_E^\bullet \subseteq \Omega_E^\bullet$ be the subcomplex of torsion differentials, i.e.\ the differentials whose support is contained in $E_{\text{\rm{sing}}}$, where $\Omega_E^\bullet =\bigwedge^\bullet \Omega_E^1$. Then $(\Omega_E^\bullet/\tau_E^\bullet, d)$ is a resolution of the constant sheaf $\Cee$, and in fact $\Omega_E^\bullet/\tau_E^\bullet \cong \uOb_E$. If all components of $E$ are compact K\"ahler, then the
 spectral sequence with $E_1$ term
$$E_1^{p,q} = H^q(E;\Omega_E^p/\tau_E^p) \implies H^{p+q}(E;\Cee)$$
degenerates at $E_1$ and the induced filtration on $H^{p+q}(E;\Cee)$ is the Hodge filtration.
\item[\rm(ii)] Setting $E^{[k]}$ to be the locus of $k$-fold intersections if the components of $E$, with $E^{[0]} = \coprod _i E_i$, there is an exact sequence (omitting the inclusion morphisms)
$$0 \to \Omega_E^\bullet/\tau_E^\bullet \to \Omega_{E^{[0]}}^\bullet \to \Omega_{E^{[1]}}^\bullet \to \Omega_{E^{[2]}}^\bullet \to \cdots$$
\item[\rm(iii)] The morphism $\Omega_{\hX}^\bullet \to \Omega_E^\bullet/\tau_E^\bullet$ is surjective and its kernel is $\Omega_{\hX}^\bullet(\log  E)(-E)$. \qed
\end{enumerate}
\end{lemma}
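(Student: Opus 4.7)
The plan is to proceed in the order (ii), (i), (iii), since (ii) is the most concrete and feeds into the others.

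For (ii), I would argue locally. Choose coordinates on $\hX$ adapted to $E$ so that $E = \{z_1 \cdots z_r = 0\}$ near a fixed point. The sheaves $\Omega_{E^{[k]}}^\bullet$ are then direct sums of de Rham complexes of coordinate subspaces $\{z_{i_0} = \cdots = z_{i_k} = 0\}$, and the maps in the sequence are alternating sums of restrictions. Exactness at each stalk follows from a Koszul-type computation using a contracting homotopy indexed on ordered subsets of $\{1,\ldots,r\}$; this is essentially Deligne's cubical resolution of a normal crossing divisor. The first map in the sequence factors as $\Omega_E^\bullet/\tau_E^\bullet \hookrightarrow \Omega_E^\bullet \to \Omega_{E^{[0]}}^\bullet$, and one then checks that its image is exactly the kernel of the next map.

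For (i), the isomorphism $\Omega_E^\bullet/\tau_E^\bullet \cong \uOb_E$ in $D_b^{\text{coh}}(E)$ is classical (due to Du Bois; see \cite[\S7.3]{PS}). The strategy is to exhibit a cubical hyperresolution of $E$ with vertices the smooth pieces $E^{[k]}$, compute $\uOb_E$ as the associated total complex, and use part (ii) to identify this total complex with $\Omega_E^\bullet/\tau_E^\bullet$ in the derived category. That $\uOb_E$ resolves $\Cee_E$ is part of the defining properties of the Du Bois complex. When all components of $E$ are compact K\"ahler, $E$ is proper, so the general degeneration theorem for the Hodge-to-de Rham spectral sequence of a proper variety applies and identifies the resulting filtration with the Hodge filtration on $H^*(E;\Cee)$.

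For (iii), surjectivity follows since, locally, every class in $\Omega_E^p/\tau_E^p$ lifts to a regular $p$-form on $\hX$. For the kernel, the smooth divisor case ($E = \{z_1 = 0\}$) is standard: restriction $\Omega_{\hX}^\bullet \to \Omega_E^\bullet$ has kernel $\Omega_{\hX}^\bullet(\log E)(-E)$, which locally is generated by $dz_1$ and by $z_1 dz_i$ for $i \ge 2$. In general, part (ii) identifies $\Omega_E^\bullet/\tau_E^\bullet$ with the kernel of $\bigoplus_i \Omega_{E_i}^\bullet \to \Omega_{E^{[1]}}^\bullet$, so the kernel of $\Omega_{\hX}^\bullet \to \Omega_E^\bullet/\tau_E^\bullet$ equals the intersection $\bigcap_i \Omega_{\hX}^\bullet(\log E_i)(-E_i)$. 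A direct computation in adapted coordinates then shows this intersection equals $\Omega_{\hX}^\bullet(\log E)(-E)$.

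The main obstacle is the combinatorial identification in (iii). For each multi-index $I$ and each $i$, membership of a monomial $f_I\, dz_I$ in $\Omega_{\hX}^p(\log E_i)(-E_i)$ imposes either a factor of $z_i$ on $f_I$ or the presence of $dz_i$ in $dz_I$; intersecting over all $i$ must cumulatively recover the defining local generators of $\Omega_{\hX}^p(\log E)(-E)$. The bookkeeping is elementary but requires care, and represents the only nontrivial verification beyond the standard facts about SNC divisors and the Du Bois complex.
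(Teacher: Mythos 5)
The paper gives no argument for this lemma at all---it is stated with a terminal \qed as a package of classical facts (Du Bois, Friedman, Guill\'en--Navarro Aznar--Pascual--Puerta, Steenbrink; cf.\ \cite[\S 7.3]{PS})---so there is no in-text proof to diverge from, and your reconstruction follows precisely the standard route those sources take: the local contracting-homotopy/Mayer--Vietoris argument for the exactness in (ii), the identification of $\uOb_E$ via the semi-simplicial resolution by the $E^{[k]}$ together with degeneration of Hodge--de Rham for compact spaces hyperresolved by compact K\"ahler manifolds for (i), and the reduction of (iii) to $\bigcap_i \Omega^\bullet_{\hX}(\log E_i)(-E_i) = \Omega^\bullet_{\hX}(\log E)(-E)$ checked on monomial frames. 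The outline is correct and complete; the only cosmetic slip is invoking degeneration for a ``proper variety,'' which in this analytic setting should be phrased for a compact analytic space admitting a hyperresolution by compact K\"ahler manifolds, exactly what the hypothesis on the components of $E$ provides.
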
 

From now on, we return to the convention that $X$ is a good Stein representative for the isolated singularity $X$, with log resolution $\hX$. Thus, for a coherent sheaf $\mathcal{F}$ on $\hX$ and for $i>0$, we can identify $R^i\pi_*\mathcal{F}$ with $H^i(\hX; \mathcal{F})$. The following is the fundamental vanishing theorem of Guill\'en, Navarro Aznar, Pascual-Gainza, Puerta and Steenbrink (see e.g.\ \cite[p. 181]{PS}): 

\begin{theorem}\label{GNAPS}  For $p+q> n$, 
$H^q(\hX; \Omega^p_{\hX}(\log E)(-E))  =0$.   \qed
\end{theorem}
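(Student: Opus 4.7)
My plan is to prove the theorem via mixed Hodge theory, following the strategy of Steenbrink and of Guill\'en--Navarro--Pascual-Gainza--Puerta.

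First, since $\pi\colon\hX\to X$ is proper over the Stein space $X$, Cartan's Theorem B and the Leray spectral sequence give
$$H^q(\hX;\Omega^p_{\hX}(\log E)(-E))\;=\;\bigl(R^q\pi_*\Omega^p_{\hX}(\log E)(-E)\bigr)_x \qquad(q\ge 1),$$
so the desired vanishing is a local invariant of the germ $(X,x)$, and after Artin approximation we may work with an algebraic model. Using the wedge pairing $\Omega^p_{\hX}(\log E)\wedge\Omega^{n-p}_{\hX}(\log E)\to\omega_{\hX}(E)$, we obtain $\Omega^p_{\hX}(\log E)(-E)\cong\mathcal{H}om(\Omega^{n-p}_{\hX}(\log E),\omega_{\hX})$. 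Relative Grothendieck--Serre duality for the proper birational $\pi$ then converts the desired vanishing into an equivalent statement about a local $\mathcal{E}xt$ group of the form $\mathcal{E}xt^{q-n}_X(R\pi_*\Omega^{n-p}_{\hX}(\log E),\omega_X^\bullet)$ at $x$.

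Second, I would identify the complex $R\pi_*\Omega^{n-p}_{\hX}(\log E)$ with a Hodge-theoretic object. Specifically, the $E_1$-degenerating Hodge--de Rham spectral sequence
$$E_1^{a,b}=H^b(\hX;\Omega^a_{\hX}(\log E))\;\Longrightarrow\; H^{a+b}(U;\Cee), \qquad U=\hX\setminus E,$$
(whose degeneration is justified by passing to a smooth projective compactification with $E\cup F$ simple normal crossing) identifies the groups $H^b(\hX;\Omega^a_{\hX}(\log E))$ with the Hodge graded pieces $\Gr^a_F H^{a+b}(U;\Cee)=\Gr^a_F H^{a+b}(L;\Cee)$, where $L$ is the link of $(X,x)$. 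Combined with the duality of the previous step, the theorem reduces to the vanishing of certain Hodge bidegrees in the mixed Hodge structure on $H^*(L;\Cee)$, constrained by the real dimension $2n-1$ of $L$ and the complex dimension $n-1$ of the exceptional divisor $E$.

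The main obstacle is this final Hodge-theoretic step: the MHS on $H^*(L;\Cee)$ must be analyzed carefully enough to extract vanishing of Hodge graded pieces in the precise range corresponding to $p+q>n$. This is where the deeper input resides---it was established by Steenbrink via limit/nearby cycles MHS and by GNPP via cubical hyperresolutions, both supplying the required bidegree and weight bounds by exploiting the dimensional constraints on $E$ and $L$. A secondary technical point is justifying the $E_1$-degeneration of the Hodge--de Rham spectral sequence in the non-projective analytic setting, which is handled by a standard compactification argument but must be checked to be compatible with the local-at-$x$ character of the statement.
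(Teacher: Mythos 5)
The paper does not actually prove this statement: it is quoted verbatim as the vanishing theorem of Guill\'en--Navarro Aznar--Pascual-Gainza--Puerta and Steenbrink, with a pointer to \cite[p.~181]{PS}, and the box at the end of the statement marks it as a cited result. So the question is whether your argument stands on its own, and it does not.

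The decisive problem is the identification in your second step. For the non-compact resolution $\hX$ of a Stein germ, the spectral sequence $E_1^{a,b}=H^b(\hX;\Omega^a_{\hX}(\log E))\Rightarrow H^{a+b}(U;\Cee)$ does \emph{not} degenerate at $E_1$, and $H^b(\hX;\Omega^a_{\hX}(\log E))$ is \emph{not} $\Gr^a_FH^{a+b}(U;\Cee)$: already for $b=0$ the left-hand side is an infinite-dimensional space of sections of a coherent sheaf over a Stein space, while the right-hand side is finite-dimensional. The correct statement (Lemma~\ref{convzero}(ii),(iii) of the paper) is that the Hodge filtration on $H^k(L)\cong H^k(U)$ is computed by the \emph{restricted} complex $\Omega^\bullet_{\hX}(\log E)|E$, so that $\Gr^p_FH^{p+q}(L)=H^q(E;\Omega^p_{\hX}(\log E)|E)$, and this group sits in a long exact sequence with $H^q(\hX;\Omega^p_{\hX}(\log E))$ and $H^q(\hX;\Omega^p_{\hX}(\log E)(-E))$. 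The map $H^q(\hX;\Omega^p_{\hX}(\log E))\to\Gr^p_FH^{p+q}(L)$ is an isomorphism exactly when the flanking $(-E)$-twisted groups vanish---which is the statement you are trying to prove, so this route is circular in addition to being wrong as stated. A projective compactification does not repair it: the groups $R^q\pi_*\Omega^p_{\hX}(\log E)(-E)$ at $x$ are local invariants of the germ and are not computed by the sheaf cohomology of a compactification of $\hX$.

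Beyond that, your final paragraph concedes that ``the deeper input resides'' in bidegree and weight bounds established by Steenbrink and by GNPP. That input is not an auxiliary fact to be quoted; after your (correct) duality step identifying $H^q(\hX;\Omega^p_{\hX}(\log E)(-E))$ with the dual of $H^{n-q}_E(\hX;\Omega^{n-p}_{\hX}(\log E))$ (exactly the identity recorded in Definition~\ref{def-inv}), the remaining vanishing \emph{is} Steenbrink's theorem, i.e.\ the statement being proved. So the proposal is a reasonable framing of why the result belongs to the Hodge theory of the link, but the one step carrying the content is both misidentified (via the false degeneration claim) and ultimately deferred to the very references the paper cites for the whole theorem.
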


 The first part of the following lemma is due  to Namikawa-Steenbrink  \cite[p.\ 407]{NS}, \cite[p.\ 1369]{SteenbrinkDB}:

\begin{lemma}\label{convzero}  Under the convention that  $E$ is a deformation retract of $\hX$, let $L$ be the link of the pair $(\hX, E)$.
\begin{enumerate}
\item[\rm(i)] The hypercohomology groups $\mathbb{H}^k(\hX; \Omega^\bullet_{\hX}(\log E)(-E))=0$ for all $k$, and hence the spectral sequence with $E_1^{p,q}= H^q(\hX; \Omega^p_{\hX}(\log E)(-E))$ converges to zero.  
\item[\rm(ii)] For all $k$, the natural map $\mathbb{H}^k(\hX; \Omega^\bullet_{\hX}(\log E)) \to \mathbb{H}^k(\hX; \Omega^\bullet_{\hX}(\log E)|E)$ is an isomorphism, and hence $\mathbb{H}^k(\hX; \Omega^\bullet_{\hX}(\log E)|E)\cong H^k(L)$. 
\item[\rm(iii)]  There is a long exact sequence
\begin{gather*}
\cdots  \to  H^q (\hX; \Omega_{\hX}^p(\log E)(-E))\to H^q(\hX; \Omega_{\hX}^p(\log E))\to 
\Gr^p_FH^{p+q}(L)\to  \\
 \to  H ^{q+1} (\hX; \Omega_{\hX}^p(\log E)(-E))\to \cdots,  
\end{gather*}
where $\Gr^p_FH^{p+q}(L)$ denotes the associated graded for the Hodge filtration of the mixed Hodge structure on $H^{p+q}(L)$.  
\item[\rm(iv)] There is a long exact sequence
$$ \cdots \to H^{k-1}(L) \to H^k_E(\hX) \to H^k(E) \to H^k(L) \to \cdots,$$
which is an exact sequence  of mixed Hodge structures.
Similarly, there are exact sequences
$$\dots \to H^q(\Omega^p_{\hX}(\log E)|E) \to H^q(\Omega^p_{\hX}(\log E)/\Omega^p_{\hX}) \to H^{q+1}(\Omega^p_E/\tau_E^p) \to H^{q+1}(\Omega^p_{\hX}(\log E)|E) \to \dots $$
\end{enumerate}
\end{lemma}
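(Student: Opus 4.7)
My plan is to derive all four statements from a handful of short exact sequences of (complexes of) sheaves on $\hX$, together with the fact that $E$ is a deformation retract of $\hX$.

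For (i), I would start from the short exact sequence of Lemma~\ref{normalcrossingbasics}(iii),
\[
0 \to \Omega^\bullet_{\hX}(\log E)(-E) \to \Omega^\bullet_{\hX} \to \Omega^\bullet_E/\tau^\bullet_E \to 0,
\]
and take hypercohomology. By the holomorphic Poincar\'e lemma and Lemma~\ref{normalcrossingbasics}(i), the middle and right terms compute $H^\ast(\hX;\Cee)$ and $H^\ast(E;\Cee)$ respectively, and the comparison map between them is the ordinary restriction along $E\hookrightarrow\hX$. Since $E$ is a deformation retract of $\hX$, this restriction is an isomorphism, whence $\mathbb{H}^\ast(\hX;\Omega^\bullet_{\hX}(\log E)(-E))=0$; consequently the b\^ete spectral sequence with $E_1^{p,q}=H^q(\hX;\Omega^p_{\hX}(\log E)(-E))$ abuts to zero. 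For (ii), I would apply the restriction short exact sequence
\[
0 \to \Omega^\bullet_{\hX}(\log E)(-E) \to \Omega^\bullet_{\hX}(\log E) \to \Omega^\bullet_{\hX}(\log E)|E \to 0,
\]
take hypercohomology, and use (i) to see that the map in (ii) is an isomorphism. The identification with $H^k(L)$ then follows from Deligne's theorem that $\Omega^\bullet_{\hX}(\log E)$ computes $H^\ast(U;\Cee)$, combined with $U\simeq L$.

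For (iii), applying the restriction sequence of (ii) to each fixed $p$ gives the desired long exact sequence in ordinary cohomology. The outstanding step is to identify $H^q(\hX;\Omega^p_{\hX}(\log E)|E)$ with $\Gr^p_F H^{p+q}(L)$. Granting (ii), the b\^ete spectral sequence for $\Omega^\bullet_{\hX}(\log E)|E$ has $E_1^{p,q}=H^q(\Omega^p_{\hX}(\log E)|E)$ and abuts to $H^{p+q}(L)$; the Durfee--Hain and Navarro Aznar construction of the mixed Hodge structure on the link shows that this spectral sequence degenerates at $E_1$ and that the induced filtration is the Hodge filtration.

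For (iv), the first sequence is the cohomology long exact sequence of the pair $(\hX,U)$ together with the identifications $H^k(\hX,U)\cong H^k_E(\hX)$ (by excision), $H^k(U)\cong H^k(L)$ and $H^k(\hX)\cong H^k(E)$; compatibility with mixed Hodge structures comes from realising this LES by a distinguished triangle of filtered complexes on $\hX$ in the spirit of Deligne and Navarro Aznar. For the second sequence, Lemma~\ref{normalcrossingbasics}(iii) identifies $\Omega^p_{\hX}/\Omega^p_{\hX}(\log E)(-E)\cong \Omega^p_E/\tau^p_E$, so the snake lemma applied to the filtration $\Omega^p_{\hX}(\log E)(-E)\subset \Omega^p_{\hX}\subset \Omega^p_{\hX}(\log E)$ yields the short exact sequence
\[
0 \to \Omega^p_E/\tau^p_E \to \Omega^p_{\hX}(\log E)|E \to \Omega^p_{\hX}(\log E)/\Omega^p_{\hX} \to 0,
\]
whose associated long exact sequence in cohomology is precisely the one claimed. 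The one serious input is the Hodge-theoretic fact used in (iii): that the b\^ete spectral sequence for $\Omega^\bullet_{\hX}(\log E)|E$ degenerates at $E_1$ and realises the Hodge filtration on $H^\ast(L)$. Granting this, the rest of the argument reduces to bookkeeping with the three exact sequences above together with the deformation retraction $\hX\simeq E$.
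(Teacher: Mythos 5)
Your proposal is correct and follows essentially the same route as the paper's proof: parts (i)--(iii) use exactly the same two short exact sequences and the same Hodge-theoretic input (the paper cites \cite[Theorem 6.9]{PS} for the degeneration and identification of the Hodge filtration on $H^\bullet(L)$), and your derivation of the second sequence in (iv) via the inclusions $\Omega^p_{\hX}(\log E)(-E)\subset \Omega^p_{\hX}\subset \Omega^p_{\hX}(\log E)$ is precisely the paper's commutative diagram. The only cosmetic difference is that for the first sequence in (iv) the paper takes hypercohomology of that same diagram of complexes, using $\mathbb{H}^k(E;\Omega^\bullet_{\hX}(\log E)/\Omega^\bullet_{\hX})\cong H^{k+1}_E(\hX)$, rather than invoking the topological long exact sequence of the pair and excision; both are standard and the mixed Hodge structure compatibility is in either case referred to \cite{Steenbrink} or \cite{PS}.
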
 
\begin{proof}
 (i) By Lemma~\ref{normalcrossingbasics}(iii), there is  an exact sequence 
$$0 \to \Omega^\bullet_{\hX}(\log E)(-E) \to \Omega^\bullet_{\hX} \to \Omega^\bullet_E/\tau_E^\bullet\to 0.$$
Taking hypercohomology, we have a long exact sequence
$$\cdots \to \mathbb{H}^k(\hX; \Omega^\bullet_{\hX}(\log E)(-E)) \to \mathbb{H}^k(\hX; \Omega^\bullet_{\hX}) \to \mathbb{H}^k(E; \Omega^\bullet_E/\tau_E^\bullet) \to \cdots $$
By assumption, the map $\mathbb{H}^k(\hX; \Omega^\bullet_{\hX}) = H^k(\hX) \to \mathbb{H}^k(E; \Omega^\bullet_E/\tau_E^\bullet)=H^k(E)$ is an isomorphism for all $k$. Hence $\mathbb{H}^k(\hX; \Omega^\bullet_{\hX}(\log E)(-E))=0$ for all $k$. 

\smallskip
\noindent (ii) We have the  long exact hypercohomology sequence associated to 
\begin{equation}\label{eqn2.1}
0 \to \Omega_{\hX}^\bullet(\log E)(-E) \to \Omega_{\hX}^\bullet(\log E) \to \Omega_{\hX}^\bullet(\log E)|E \to 0.
\end{equation}
 By (i),  $\mathbb{H}^k(\hX; \Omega^\bullet_{\hX}(\log E)) \cong \mathbb{H}^k(\hX; \Omega^\bullet_{\hX}(\log E)|E)$. Finally,  $\mathbb{H}^k(\hX; \Omega^\bullet_{\hX}(\log E)) \cong H^k(\hX -E) = H^k(U)$. Thus $\mathbb{H}^k(\hX; \Omega^\bullet_{\hX}(\log E)|E) \cong H^k(U) \cong H^k(L)$. 
 
 \smallskip
\noindent (iii) By e.g.\ \cite[Theorem 6.9]{PS}, the  filtration on $\mathbb{H}^k(\hX; \Omega^\bullet_{\hX}(\log E)|E)$ induced by the trivial filtration on $\Omega^\bullet_{\hX}(\log E)|E$ computes the Hodge filtration of the mixed Hodge structure on $H^\bullet(L)$. In particular, $\Gr^p_FH^{p+q}(L) = H^q(\hX; \Omega^p_{\hX}(\log E)|E)$. Then (iii) follows by taking the usual cohomology of the exact sequence (\ref{eqn2.1}).

\smallskip
\noindent (iv) There is a commutative diagram
$$\begin{CD}
0@>>>  \Omega^\bullet_{\hX} /\Omega^\bullet_{\hX}(\log E)(-E) @>>> \Omega^\bullet_{\hX}(\log E)/\Omega^\bullet_{\hX}(\log E)(-E) @>>> \Omega^\bullet_{\hX}(\log E)/\Omega^\bullet_{\hX} @>>> 0\\
@. @V{\cong}VV @| @| @.\\
0 @>>> \Omega^\bullet_E/\tau_E^\bullet @>>> \Omega^\bullet_{\hX}(\log E)|E @>>> \Omega^\bullet_{\hX}(\log E)/\Omega^\bullet_{\hX} @>>> 0,
\end{CD}$$
Here $\mathbb{H}^k(E;\Omega^\bullet_{\hX}(\log E)|E) = H^k(L)$ and $\mathbb{H}^k(E;\Omega^\bullet_{\hX}(\log E)/\Omega^\bullet_{\hX}) = H^{k+1}_E(\hX)$. The long exact sequences on hypercohomology or cohomology then give the exact sequences in (iv).  For the statement about mixed Hodge structures, we refer to \cite[\S1]{Steenbrink} or \cite[Chapter 6]{PS}. 
\end{proof}

\begin{remark}    Note that $\Omega^\bullet_{\hX}(\log E)|E$ is independent of the choice of the representative $X$ representing the germ, and the corresponding cohomology and hypercohomology groups are independent of the choice of a resolution. Likewise, the mixed Hodge structure on $H^k(L)$ only depends on the germ $(X,x)$. By contrast, the mixed Hodge structure on  the cohomology $H^k(M)$ of the Milnor fiber depends on the choice of a smoothing, even in the lci case.  
\end{remark} 

As a corollary of Lemma~\ref{convzero}(i), we obtain the following, first noted by Steenbrink in case $k=0$ \cite[p.\ 1369]{SteenbrinkDB}:

\begin{lemma}[Extra vanishing lemma]\label{extravan}  Suppose that the isolated singularity $X$ is $k$-Du Bois. Then $H^{n-k-1}(\hX; \Omega^{k+1}_{\hX}(\log E)(-E)) = 0$. 
\end{lemma}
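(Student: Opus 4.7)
The plan is to run the Hodge-to-de Rham spectral sequence
\[
E_1^{p,q} = H^q(\hX; \Omega^p_{\hX}(\log E)(-E)) \Longrightarrow \mathbb{H}^{p+q}(\hX; \Omega^\bullet_{\hX}(\log E)(-E)),
\]
whose abutment vanishes identically by Lemma~\ref{convzero}(i), and then to show that the particular entry $E_1^{k+1,n-k-1}$ neither receives nor emits a nonzero differential on any page. Once that is established, $E_1^{k+1,n-k-1}$ agrees with $E_\infty^{k+1,n-k-1}$, and the vanishing of the abutment forces $E_1^{k+1,n-k-1}=0$, which is precisely the claim. This is the same pattern Steenbrink used in the case $k=0$.

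The mechanism is a pincer between two vanishing regions on the $E_1$-page. By Theorem~\ref{GNAPS}, the region $p+q>n$ vanishes, while by Theorem~\ref{thm2} combined with the $k$-Du Bois hypothesis the strip $0\le p\le k$, $q>0$ vanishes as well. I would then trace every differential $d_r\colon E_r^{p,q}\to E_r^{p+r,q-r+1}$ that meets the bidegree $(p,q)=(k+1,n-k-1)$. Any outgoing differential lands in total degree $n+1$, hence in the GNAPS vanishing region. Any incoming differential originates at $(k+1-r,\,n-k+r-2)$; assuming we are in the interesting range $k\le n-2$ (so that $n-k-1\ge 1$), this source has $p=k+1-r\le k$ and $q=n-k+r-2\ge n-k-1>0$, and so is killed by the $k$-Du Bois vanishing for $1\le r\le k+1$, while for $r\ge k+2$ the source is zero trivially because $p<0$. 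Consequently $E_\infty^{k+1,n-k-1}=E_1^{k+1,n-k-1}$, and convergence to zero closes the argument.

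The main obstacle is essentially bookkeeping: one has to verify that the GNAPS region and the $k$-Du Bois strip together account for \emph{every} differential $d_r$ with source or target at $(k+1,n-k-1)$, on every page. No further Hodge-theoretic input is needed beyond Lemma~\ref{convzero}(i) and the two named vanishing theorems. The slight edge case $k=n-1$, where the required incoming bound $q>0$ degenerates, falls outside the natural range of the lemma (then $n-k-1=0$) and, if needed, would be handled separately by the observation that $\Omega^n_{\hX}(\log E)(-E)=K_{\hX}$, rather than by the spectral sequence argument above.
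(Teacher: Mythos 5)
Your argument is correct and is essentially identical to the paper's proof: both run the spectral sequence of Lemma~\ref{convzero}(i) converging to zero, kill the region $p+q>n$ by Theorem~\ref{GNAPS} and the strip $p\le k$, $q>0$ by the $k$-Du Bois hypothesis via Theorem~\ref{thm2}, and conclude that $E_1^{k+1,n-k-1}=E_\infty^{k+1,n-k-1}=0$. Your explicit bookkeeping of the differentials and the remark on the degenerate case $k=n-1$ only spell out what the paper leaves as ``an examination of the spectral sequence.''
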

\begin{proof} In the spectral sequence of Lemma~\ref{convzero}(i), $E_1^{p,q} = 0$ for $p+q > n$ by Theorem~\ref{GNAPS}, and $E_1^{p,q} = 0$ for $p\le k$ and all $q > 0$ by the hypothesis that $X$ is $k$-Du Bois (Theorem~\ref{thm2}). Then an examination of the spectral sequence shows that all differentials $d_r$ with source or target $E_1^{k+1,n-k-1}=H^{n-k-1}(\hX; \Omega^{k+1}_{\hX}(\log E)(-E))$ are $0$. Thus 
$$E_\infty^{k+1,n-k-1} = E_1^{k+1,n-k-1}=H^{n-k-1}(\hX; \Omega^{k+1}_{\hX}(\log E)(-E)).$$
Since the spectral sequence converges to $0$, we must have $H^{n-k-1}(\hX; \Omega^{k+1}_{\hX}(\log E)(-E))=0$.
\end{proof}

Finally, we note the semipurity  theorem of Goresky-MacPherson \cite[Theorem 1.11]{Steenbrink}:

\begin{theorem}\label{semipure} The morphism of mixed Hodge structures  $H^k_E(\hX) \to H^k(E)$ is injective for $k\leq  n$, surjective for $k\geq n$, and   an isomorphism for $k=n$. Thus, for $i\leq n-1$, the map $H^i(E) \to H^i(L)$ is surjective and hence $\Gr^W_rH^i(L) = 0$ for $r >i$. \qed
\end{theorem}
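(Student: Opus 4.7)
The plan is to convert the question into a weight-filtration computation, using the long exact sequence of mixed Hodge structures
$$\cdots \to H^{k-1}(L) \to H^k_E(\hX) \xrightarrow{\alpha_k} H^k(E) \to H^k(L) \to H^{k+1}_E(\hX) \to \cdots$$
supplied by Lemma~\ref{convzero}(iv), and then arguing by strictness. Three weight bounds enter: (a) for the compact normal crossing divisor $E$, $H^k(E)$ has weights $\leq k$ (Deligne); (b) for the local cohomology, $H^k_E(\hX)$ has weights $\geq k$, which I would obtain from Poincar\'e--Lefschetz duality $H^k_E(\hX) \cong H_{2n-k}(E)(-n)$ on the smooth manifold $\hX$ combined with the fact that $H_j(E)$, being dual to $H^j(E)$ for the compact variety $E$, has weights $\geq -j$; and (c) for the link, $H^k(L)$ has weights $\leq k$ when $k \leq n-1$ and weights $\geq k+1$ when $k \geq n$, a theorem of Durfee.

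For injectivity of $\alpha_k$ when $k \leq n$, the kernel is the image of $H^{k-1}(L) \to H^k_E(\hX)$. As a subobject of $H^k_E(\hX)$ this image has weights $\geq k$, while as a quotient of $H^{k-1}(L)$ it has weights $\leq k-1$ by (c); strictness of morphisms of MHS with respect to the weight filtration forces the image to vanish. Dually, for surjectivity when $k \geq n$, the cokernel of $\alpha_k$ injects into $H^k(L)$ and therefore has weights $\geq k+1$ by (c), while as a quotient of $H^k(E)$ it has weights $\leq k$, so it must vanish. The case $k=n$ then yields the isomorphism.

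For the ``Thus'' assertion I would take $i \leq n-1$ and apply the injectivity just proved to $\alpha_{i+1}$ (legal since $i+1 \leq n$): exactness of $H^i(E) \to H^i(L) \to H^{i+1}_E(\hX) \xrightarrow{\alpha_{i+1}} H^{i+1}(E)$ forces $H^i(L) \to H^{i+1}_E(\hX)$ to be zero, whence $H^i(E) \twoheadrightarrow H^i(L)$. Then $H^i(L)$ inherits weights $\leq i$ from $H^i(E)$, giving $\Gr^W_r H^i(L) = 0$ for $r > i$. I expect the main obstacle to be justifying the link weight bound (c); it does not follow from (a), (b), and the long exact sequence alone, and in practice one either cites Durfee directly or deduces it from Poincar\'e duality on the $(2n-1)$-manifold $L$ combined with the Deligne weight bound $W_{j-1} H^j(U) = 0$ for the smooth open $U \simeq L$ inside $\hX$.
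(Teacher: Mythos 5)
Your formal skeleton is fine as far as it goes: the weight bounds (a) for the compact divisor $E$ and (b) for $H^k_E(\hX)\cong H_{2n-k}(E)(-n)$ are correct, the strictness argument deriving injectivity/surjectivity of $\alpha_k$ from (a), (b), (c) is valid, and your derivation of the ``Thus'' clause from the injectivity of $\alpha_{i+1}$ is exactly the intended reading. Note, however, that the paper offers no proof of the first assertion at all --- it cites Goresky--MacPherson via \cite[Theorem 1.11]{Steenbrink} --- so the only substantive content of your proposal is the claim that (c) can be supplied independently, and that is where the argument breaks down.

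The gap is that (c) \emph{is} the theorem. Running the long exact sequence with only (a) and (b) shows that $W_kH^k(L)=\operatorname{Coker}(\alpha_k)$ and $H^k(L)/W_k\cong\Ker(\alpha_{k+1})$; hence the assertion about $\alpha_k$ is \emph{equivalent} to the weight dichotomy on $H^\bullet(L)$, and Poincar\'e duality on the $(2n-1)$-manifold $L$ interchanges its two halves. So invoking (c) is invoking the statement to be proved, and neither of your proposed justifications rescues it. First, the Deligne bound $W_{j-1}H^j(U)=0$ for smooth $U$ does not apply to the punctured neighborhood $U=X-\{x\}$ of a Stein representative with its Durfee mixed Hodge structure, and it is in fact false there: for a cusp surface singularity ($n=2$, $E$ a cycle of rational curves) one has $H^1_E(\hX)\cong H_3(E)(-1)=0$, so $H^1(E)\cong\Cee$, which is pure of weight $0$, injects into $H^1(L)$ and gives $W_0H^1(L)\neq 0$. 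Second, even if that lower bound held, combining it with duality on $L$ would only place the weights of $H^k(L)$ in $[k,k+1]$, which is strictly weaker than semipurity. The statement genuinely requires outside input (intersection cohomology and the decomposition theorem, or the arguments of Navarro Aznar/Steenbrink, or mixed Hodge modules); if you are going to cite it, cite the semipurity theorem itself, at which point the rest of your write-up is a correct but purely formal unwinding.
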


\subsection{Numerical invariants}  
\begin{definition}\label{def-inv}
 Following \cite{SteenbrinkDB}, for $q>0$, we define the \textsl{Du Bois invariants} 
$$b^{p,q} = \dim H^q(\hX; \Omega^p_{\hX}(\log E)(-E))  = \dim H^{n-q}_E(\hX; \Omega^{n-p}_{\hX}(\log E)).$$
By Theorem~\ref{GNAPS}, $b^{p,q} =0$ if $p+q> n$.

There are also the  \textsl{link invariants}
$$\ell^{p,q} = \ell^{p,q}(L) = \dim  H^q(E;\Omega^p_{\hX}(\log E)|E) = \dim \Gr_F^pH^{p+q}(L).$$
By Serre duality, $\ell^{p,q} =  \ell^{n-p,n-q-1}$. 

Finally, define the Deligne-Hodge numbers of the link $L$: $h^{p,q}_i = h^{p,q}_i(L) = \dim \Gr_F^p\Gr_{p+q}^WH^i(L)$. Thus   $h^{p,q}_i = h^{q,p}_i$, $\ell^{p,i-p}= \sum_qh^{p,q}_i$ and, by semipurity (Theorem~\ref{semipure}),  $h^{p,q}_i = 0$ if $i< n$ and $p+q > i$. 
\end{definition}
The following is \cite[Lemma 1]{SteenbrinkDB} (where we are mainly interested in the case $i=n-1$):

\begin{lemma}\label{Steenbrinkinequal}  Fix $k$ and $i$ with  $0\le k\le i \le n-1$. Then 
\begin{equation}\label{eqn2.7} \sum_{p=0}^k \ell^{i-p,p} \le 
 \sum_{p=0}^k \ell^{p,i-p}.
 \end{equation}
 Thus for example, taking $k=0$, we have $\ell^{i,0} \le \ell^{0,i}$ for all $i$, $0\le  i \le n-1$. 
  Furthermore, equality holds in {\rm(\ref{eqn2.7})}   for all $k' \le k$, i.e.\ $\displaystyle \sum_{p=0}^{k'} \ell^{i-p,p} = 
 \sum_{p=0}^{k'} \ell^{p,i-p}$  for all $k'\le k$   $\iff$ $\Gr_F^pW_jH^i(L)=0$ for all $p\le k$ and $j< i$  $\iff$ $\ell^{p,i-p}=\ell^{i-p,p}$ for all $p\le k$.
\end{lemma}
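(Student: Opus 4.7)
\textbf{Proof proposal for Lemma~\ref{Steenbrinkinequal}.} My strategy is to decompose both sides of (\ref{eqn2.7}) into the refined Deligne-Hodge numbers $h^{p,q}_i$ and to exploit two pieces of structure: Hodge symmetry $h^{p,q}_i=h^{q,p}_i$ on each pure weight quotient $\Gr_j^W H^i(L)$, and the semipurity bound (Theorem~\ref{semipure}) which gives $h^{p,q}_i=0$ whenever $p+q>i$, since $i\le n-1<n$.

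Concretely, I first rewrite
\begin{align*}
\ell^{p,i-p} &= \sum_{q=0}^{i-p} h^{p,q}_i, &
\ell^{i-p,p} &= \sum_{q=0}^{p} h^{i-p,q}_i,
\end{align*}
the upper limits coming from semipurity. Summing over $0\le p\le k$, the right-hand side of (\ref{eqn2.7}) becomes
$$\sum_{p=0}^k \ell^{p,i-p} = \sum_{(a,b)\in R_A} h^{a,b}_i, \qquad R_A=\{(a,b):0\le a\le k,\ 0\le b\le i-a\},$$
while the left-hand side equals $\sum_{(a,b)\in R_B} h^{a,b}_i$ with $R_B=\{(a,b):i-k\le a\le i,\ 0\le b\le i-a\}$. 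Applying Hodge symmetry on the weight $(a+b)$-piece to swap $(a,b)\mapsto(b,a)$ reflects $R_B$ across the diagonal, so
$$\sum_{p=0}^k \ell^{i-p,p} = \sum_{(a,b)\in R'_B} h^{a,b}_i, \qquad R'_B=\{(a,b):0\le a\le k,\ i-k\le b\le i-a\}.$$

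The crucial observation is that $R'_B\subset R_A$, since $R'_B$ is obtained from $R_A$ just by imposing the extra constraint $b\ge i-k$. Therefore
$$\sum_{p=0}^k \ell^{p,i-p}-\sum_{p=0}^k \ell^{i-p,p} = \sum_{(a,b)\in R_A\setminus R'_B} h^{a,b}_i = \sum_{a=0}^{k}\sum_{b=0}^{i-k-1} h^{a,b}_i \ge 0,$$
which establishes (\ref{eqn2.7}). For the equality assertion, since the difference above is monotone in $k$, requiring equality for every $k'\le k$ forces $h^{a,b}_i=0$ for all $a\le k$ and $a+b<i$, i.e.\ $\Gr^p_F\Gr^W_j H^i(L)=0$ for $p\le k$ and $j<i$, which is exactly the stated condition $\Gr^p_F W_j H^i(L)=0$ for $p\le k$ and $j<i$. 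The equivalence with the term-wise equalities $\ell^{p,i-p}=\ell^{i-p,p}$ for $p\le k$ then follows by taking differences of consecutive partial sums.

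I do not expect a genuine obstacle here: the argument is purely combinatorial once Hodge symmetry on weight-graded pieces and the semipurity vanishing are invoked. The one point requiring a brief sanity check is that the reflection $R_B \mapsto R'_B$ lands inside $R_A$, which uses the hypothesis $k\le i\le n-1$; and that equality for all $k'\le k$ implies term-by-term equality, which uses the monotonicity of $R_A\setminus R'_B$ in $k$.
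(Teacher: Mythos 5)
Your proof is correct and follows essentially the same route as the paper's: both expand the link invariants into the Deligne--Hodge numbers $h^{p,q}_i$, truncate via semipurity, reflect the left-hand index set by Hodge symmetry, and observe that the reflected set lies inside the right-hand one with difference the rectangle $\{(a,b): a\le k,\ b\le i-k-1\}$. One small repair in the equality discussion: the differences $D_{k'}=\sum_{a\le k'}\sum_{b\le i-k'-1}h^{a,b}_i$ are \emph{not} monotone in $k'$ (the rectangles are not nested), but monotonicity is not needed --- by nonnegativity of the $h^{a,b}_i$, the vanishing of $D_{k'}$ for every $k'\le k$ is equivalent to $h^{a,b}_i=0$ on the union of those rectangles, which is precisely the triangle $\{a\le k,\ a+b\le i-1\}$, exactly as in the paper.
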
 
\begin{proof} 
For a fixed $k$, we have
 $$\sum_{p=0}^k \ell^{i-p,p} =  \sum_{p=0}^k \sum_qh^{i-p, q}_i = \sum_{p=0}^k \sum_{q\leq p}h^{i-p, q}_i =  \sum_{p=0}^k \sum_{q\leq p}h^{q,i-p}_i=\sum_{\substack {i-k \le s \le i \\r+s \le i}}h^{r, s}_i,$$
by the Hodge symmetries,  whereas
 $$\sum_{p=0}^k \ell^{p,i-p} = \sum_{p=0}^k \sum_qh^{p, q}_i = \sum_{p=0}^k \sum_{p+q \le i}h^{p, q}_i=\sum_{\substack { r \le k \\r+s \le i}}h^{r, s}_i.$$
Note that, if  $i-k \le s \le i $ and $r+s \le i$, then $r\le i-s \le k$, so the second sum is greater than or equal to  the first, giving the inequality. 
Equality  holds for all $k' \le k$ $\iff$ for all $k' \le k$ , $h^{r, s}_i = 0$ for $r\leq k'$ and $s\leq i-k' -1$ (and hence automatically $r+s \le i-1 < i$)  $\iff$ $h^{r, s}_i = 0$ for $r\leq k$, $r+s \leq i-1$.  This is equivalent to: $\Gr_F^pW_jH^i(L)=0$ for $p\le k$ and $j\le i-1$.   

The final statement is clear, since $\displaystyle \sum_{p=0}^{k'} \ell^{i-p,p} = 
 \sum_{p=0}^{k'} \ell^{p,i-p}$  for all $k'\le k$   $\iff$    $\ell^{p,i-p}=\ell^{i-p,p}$ for all $p\le k$.
 \end{proof}
 
\subsection{The case of a local complete intersection} In this case, there are stronger conditions on the invariants.
 
 \begin{theorem}\label{dBzero}  If $X$ is an isolated lci, then $b^{p,q}=0$  for $q>0$,   $p+q\neq n$ or $n-1$. Moreover, the invariants  $\ell^{p,q}$ and $\dim H^q(\hX;  \Omega^p_{\hX}(\log E))$, $q> 0$, are $0$ except possibly in the following cases: $p+q= n-1$ or  $n$ or $(p,q)= (0,0)$ (for $\ell^{p,q}$) or  $(p,q)= (n,n-1)$. Finally, $\dim H^{n-1}(\hX;  \Omega^n_{\hX}(\log E))=1$. 
\end{theorem}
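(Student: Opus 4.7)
Plan. The theorem has three separate assertions, which I handle in order, each leveraging special features of an isolated lci singularity.

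For the link invariants $\ell^{p,q}$: Hamm's extension of Milnor's connectivity theorem asserts that the link $L$ of an isolated lci singularity of dimension $n$ is $(n-2)$-connected. Combined with Poincar\'e duality on the orientable real $(2n-1)$-manifold $L$, this gives $H^i(L;\Cee)=0$ for $i\notin\{0,n-1,n,2n-1\}$, while $H^0(L)=H^{2n-1}(L)=\Cee$. Since $\ell^{p,q}=\dim\Gr^p_FH^{p+q}(L)$ by Definition~\ref{def-inv}, the claimed vanishings are immediate; the Serre-type duality $\ell^{p,q}=\ell^{n-p,n-q-1}$ identifies $\ell^{0,0}=\ell^{n,n-1}=1$.

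For the Du Bois invariants $b^{p,q}$ and the log cohomology $H^q(\hX;\Omega^p_{\hX}(\log E))$: the case $p+q>n$ is exactly Theorem~\ref{GNAPS}. For $q>0$ and $p+q\le n-2$, I would feed the vanishing of $\ell^{p,q-1}$ and $\ell^{p,q}$ from the previous step into the long exact sequence of Lemma~\ref{convzero}(iii). On the interior of this region the LES collapses to an isomorphism $b^{p,q}\cong\dim H^q(\hX;\Omega^p_{\hX}(\log E))$, and the sole edge case $(p,q)=(0,1)$ (where $\ell^{0,0}=1$ is nonzero) still yields the same isomorphism after using that the evaluation $H^0(\hX;\mathcal{O}_{\hX})=\mathcal{O}_{X,x}\twoheadrightarrow\Cee=\ell^{0,0}$ is surjective. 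It then suffices to show $H^q(\hX;\Omega^p_{\hX}(\log E))=0$ throughout this range. For this I would use the spectral sequence with $E_1^{p,q}=H^q(\hX;\Omega^p_{\hX}(\log E))$ converging to $\mathbb{H}^{p+q}(\hX;\Omega^\bullet_{\hX}(\log E))=H^{p+q}(L)$ from Lemma~\ref{convzero}(ii); by the first step the abutment vanishes for $1\le p+q\le n-2$, and comparing with the companion spectral sequence on the compact $E$ (with $E_1^{p,q}=\ell^{p,q}$, which degenerates at $E_1$ by Deligne--Steenbrink Hodge theory on snc varieties and computes the same limit) together with the LES forces the differentials to be trivial on these antidiagonals, giving the desired term-wise vanishing.

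For the final dimension statement, write $\Omega^n_{\hX}(\log E)=\omega_{\hX}(E)$ and apply the adjunction sequence $0\to\omega_{\hX}\to\omega_{\hX}(E)\to\omega_E\to0$ together with Grauert--Riemenschneider vanishing $H^q(\hX;\omega_{\hX})=0$ for $q>0$ to obtain $H^{n-1}(\hX;\omega_{\hX}(E))\cong H^{n-1}(E;\omega_E)$. Since $E=\pi^{-1}(x)$ is connected and is a compact Gorenstein scheme of pure dimension $n-1$ (being snc, hence locally a hypersurface in the smooth manifold $\hX$), Serre--Grothendieck duality identifies this group with $H^0(E;\mathcal{O}_E)^{\vee}=\Cee$, giving dimension $1$. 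The delicate point in the whole argument is the term-wise vanishing of $H^q(\hX;\Omega^p_{\hX}(\log E))$ for $p+q\le n-2$: because $\hX$ is non-compact, the degeneration of the relevant Hodge-to-de Rham spectral sequence is not automatic and must be extracted from its compact companion on $E$ via the long exact sequence.
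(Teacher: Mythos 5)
Your treatment of the link invariants (Hamm's connectivity theorem plus duality) is exactly the paper's argument, and your proof of $\dim H^{n-1}(\hX;\Omega^n_{\hX}(\log E))=1$ via the adjunction sequence $0\to\omega_{\hX}\to\omega_{\hX}(E)\to\omega_E\to 0$, Grauert--Riemenschneider, and Serre duality on the compact Gorenstein scheme $E$ is a correct alternative to the paper's route, which instead reads this dimension off the exact sequence of Lemma~\ref{convzero}(iii) together with $\ell^{n,n-1}=1$ and Theorem~\ref{GNAPS}.

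The middle step, however, has a genuine gap, and it is precisely at the point you flag as ``delicate.'' The vanishing of the $E_\infty$ terms of the spectral sequence $E_1^{p,q}=H^q(\hX;\Omega^p_{\hX}(\log E))\Rightarrow H^{p+q}(L)$ in total degrees $1,\dots,n-2$ does not force the $E_1$ terms to vanish there: a nonzero term in that region can die by \emph{supporting} a differential landing on the antidiagonal $p+q=n-1$, where the abutment is nonzero (for instance $d_1\colon E_1^{n-3,1}\to E_1^{n-2,1}$ could be injective), so no contradiction arises. The comparison with the companion spectral sequence on $E$ does not rescue this: a morphism of spectral sequences inducing an isomorphism on abutments, with degenerate target, does not force the source to degenerate. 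Indeed the source cannot degenerate at $E_1$ here, since the row $E_1^{p,0}=H^0(\hX;\Omega^p_{\hX}(\log E))$ consists of infinite-dimensional $\scrO_{X,x}$-modules ($X$ is Stein) while the abutment is finite-dimensional, so the $d_1$ differentials on that row are very far from zero. The vanishing $b^{p,q}=0$ for $q>0$, $p+q\le n-2$ is not a formal consequence of the $(n-2)$-connectivity of the link; it is a coherent, depth-theoretic statement. The paper obtains it by citing Steenbrink's computation of the Du Bois invariants of an isolated complete intersection singularity \cite[Theorem 5]{SteenbrinkDB}, whose essential input is Greuel's theorem that $H^q_x(X;\Omega^p_X)=0$ for $1\le p+q\le n-1$; this is the second, independent use of the lci hypothesis that your argument is missing. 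Once $b^{p,q}=0$ is known in that range, the vanishing of $H^q(\hX;\Omega^p_{\hX}(\log E))$ there follows from the long exact sequence of Lemma~\ref{convzero}(iii) and the $\ell^{p,q}$ vanishing --- i.e.\ the paper runs your long exact sequence in the opposite logical direction.
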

\begin{proof} The   statement  about $b^{p,q}$ is proved in  \cite[Theorem 5]{SteenbrinkDB}. It follows easily from a theorem of Greuel that $H^q_x(X; \Omega^p_X) = 0$ for $1\le p+q \le n-1$ \cite[Lemma 1.8]{Greuel75}, \cite[Prop. 2.3]{greuel}. Since $H^k(L) =0$ if  $k\notin\{0, n-1, n, 2n-1\}$ and $\dim H^0(L) = \dim H^{2n-1}(L) =1$, $\ell^{p,q}=0$ if $p+q\notin\{0, n-1, n, 2n-1\}$ or $p+q =2n-1$, $(p,q) \neq (n,n-1)$.  Moreover, $\ell^{0,0} = \ell^{n, n-1} =1$.  The statement  about $\dim H^q(\hX;  \Omega^p_{\hX}(\log E))$ then follows from the exact sequence of Lemma~\ref{convzero}(iii)
and the corresponding statement for $\ell^{p,q}$.
\end{proof} 

\begin{definition} For $X$  an isolated lci singularity, let $M$ be the Milnor fiber. There is a mixed Hodge structure on $H^n(M)$, which depends on the choice of a one parameter smoothing, and we set 
$$s_p = \dim\Gr_F^pH^n(M).$$
The integer $s_p$  is independent of the choice of a smoothing and is thus an invariant of the germ $(X,x)$. (More generally, for a smoothable isolated singularity $(X,x)$, one can define $s_p$ for a given smoothing component of $(X,x)$, compare \cite[proof of Theorem 2.9]{Steenbrinksemicont}.) 
\end{definition}

\begin{proposition}\label{boundlink} Let  $X$ be an isolated lci singularity of dimension $n$.
\begin{enumerate} 
\item[\rm(i)] There is an exact sequence of mixed Hodge structures connecting $M$ and $L$, given by
$$0 \to H^{n-1}(L) \to H^n(M,L) \to H^n(M) \to H^n(L) \to 0.$$
Moreover, the sequence is  self-dual. In particular $H^n(M,L)\cong (H^n(M))\spcheck(-n)$ and $H^n(L)\cong (H^{n-1}(L))\spcheck(-n)$, i.e.\   $H^n(M,L)$ and $H^n(M)$ are dual mixed Hodge structures up to a Tate twist, and similarly for  $H^n(L)$ and $H^{n-1}(L)$.
\item[\rm(ii)]  $s_{n-p}-s_p=\ell^{p,n-p-1}-\ell^{p,n-p} = \ell^{p,n-p-1}-\ell^{n-p, p-1}$. 
\item[\rm(iii)] $\ell^{p,n-p}=\ell^{n-p, p-1} \le s_p$ and $\ell^{p, n-p-1}= \ell^{n-p,p}\leq s_{n-p}$.
\end{enumerate}
\end{proposition}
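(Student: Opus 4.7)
The plan is to derive the four-term sequence in (i) from the long exact sequence of the pair $(M,L=\partial M)$, upgrade it to a sequence of mixed Hodge structures, extract the self-duality from Poincar\'e--Lefschetz duality on the manifold-with-boundary $(M,L)$, and then obtain (ii) and (iii) by applying $\Gr^p_F$ and counting dimensions, using the strictness of MHS morphisms and the Serre duality $\ell^{p,q}=\ell^{n-p,n-q-1}$ already recorded in Definition~\ref{def-inv}.

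First I would prove (i). Since $(X,x)$ is an isolated lci of dimension $n\ge 2$, the Milnor fiber $M$ is homotopy equivalent to a bouquet of $n$-spheres (Milnor in the hypersurface case, Hamm in the lci case), so $H^k(M)=0$ for $0<k<n$ and $k>n$. Topological Poincar\'e--Lefschetz duality then gives $H^{n+1}(M,L)\cong H_{n-1}(M)=0$. Extracting the relevant portion of the long exact sequence of the pair $(M,L)$ leaves exactly the claimed four-term sequence. To upgrade it to mixed Hodge structures, note that $H^n(M)$ carries Steenbrink's limit MHS, $H^{\bullet}(L)$ carries the resolution MHS used throughout Section~\ref{section2} (cf.\ Lemma~\ref{convzero}), and $H^{\bullet}(M,L)$ carries the relative MHS; the connecting maps are morphisms of MHS by the standard formalism (see \cite[\S1]{Steenbrink} or \cite[Chapter 6]{PS}). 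The self-duality then follows from Poincar\'e--Lefschetz duality promoted to MHS: on the complex manifold $M$ of dimension $n$ with boundary, $H^k(M,L)\cong (H^{2n-k}(M))\spcheck(-n)$; in particular $H^n(M,L)\cong (H^n(M))\spcheck(-n)$. Similarly, Poincar\'e duality on the oriented $(2n-1)$-manifold $L$ yields $H^n(L)\cong (H^{n-1}(L))\spcheck(-n)$ as MHS, which identifies the end terms of the sequence with the duals of the beginning terms.

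For (ii), I would apply $\Gr^p_F$ to the four-term exact sequence from (i). Strictness of morphisms of MHS makes the resulting sequence exact, and the dimensions are $\dim \Gr^p_F H^n(M)=s_p$, $\dim \Gr^p_F H^n(L)=\ell^{p,n-p}$, $\dim \Gr^p_F H^{n-1}(L)=\ell^{p,n-p-1}$, and, by the duality just established, $\dim \Gr^p_F H^n(M,L)=\dim \Gr^{n-p}_F H^n(M)=s_{n-p}$. The alternating sum of dimensions gives $\ell^{p,n-p-1}-s_{n-p}+s_p-\ell^{p,n-p}=0$, which is the first equality of (ii); the second equality $\ell^{p,n-p}=\ell^{n-p,p-1}$ is just the Serre duality of Definition~\ref{def-inv} applied with $q=n-p$. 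Part (iii) is then immediate: the exact sequence in (i) shows $H^{n-1}(L)\hookrightarrow H^n(M,L)$ and $H^n(M)\twoheadrightarrow H^n(L)$, and strictness preserves these after taking $\Gr^p_F$, yielding $\ell^{p,n-p-1}\le s_{n-p}$ and $\ell^{p,n-p}\le s_p$; the identifications $\ell^{p,n-p}=\ell^{n-p,p-1}$ and $\ell^{p,n-p-1}=\ell^{n-p,p}$ again come from Serre duality. The only delicate point is the MHS-compatibility of the topological long exact sequence and of Poincar\'e--Lefschetz duality on $(M,L)$; both are standard in Steenbrink's framework and are implicitly used elsewhere in the paper, so no substantial new obstacle is expected.
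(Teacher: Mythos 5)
Your argument is correct and follows essentially the same route as the paper: the paper simply cites (i) as a standard result (your derivation via the long exact sequence of the pair, Hamm's bouquet theorem, and Poincar\'e--Lefschetz duality upgraded to mixed Hodge structures is the standard proof being invoked), and it obtains (ii) and (iii) exactly as you do, from exactness of the sequence in (i) together with strictness of MHS morphisms for the Hodge filtration and the Serre duality relation $\ell^{p,q}=\ell^{n-p,n-q-1}$.
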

\begin{proof} (i) is a standard result.  Then (ii) and  (iii) are   easy consequences  of the exactness of the sequence in (i) and the fact that morphisms of mixed Hodge structures are strict with respect to the Hodge filtration. 
\end{proof} 

We then have the following \cite[p.\ 1372]{SteenbrinkDB}:

\begin{proposition}\label{nminuspvsp}  For $X$ an isolated lci singularity of dimension $n$, in the above notation, 
$$\sum_{p=0}^ks_p \le \sum_{p=0}^k s_{n-p}.$$
If equality holds, then $\ell^{n-k-1,k} = \ell^{k+1,n-k-1} =0$. 
 \end{proposition}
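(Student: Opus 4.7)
The plan is to reduce the inequality to Steenbrink's inequality (Lemma~\ref{Steenbrinkinequal}) applied at $i = n-1$, i.e.\ to the Hodge filtration on $H^{n-1}(L)$. Using Proposition~\ref{boundlink}(ii), I would first write
$$\sum_{p=0}^k (s_{n-p} - s_p) = A_k - C_k, \qquad A_k := \sum_{p=0}^k \ell^{p, n-p-1}, \qquad C_k := \sum_{p=0}^k \ell^{p, n-p},$$
so that $A_k$ is already a partial sum of $\dim \Gr_F^\bullet H^{n-1}(L)$, while $C_k$ lives a priori on $H^n(L)$.

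The next step is to translate $C_k$ into a partial sum on $H^{n-1}(L)$ as well. For $p \ge 1$, Proposition~\ref{boundlink}(iii) gives $\ell^{p, n-p} = \ell^{n-p, p-1}$; for $p = 0$, semipurity (Theorem~\ref{semipure}) forces the weights of $H^{n-1}(L)$ to be at most $n-1$, hence $\ell^{0, n} = \dim \Gr_F^n H^{n-1}(L) = 0$. Reindexing $a = n - p$, I would then rewrite $C_k = \sum_{a = n-k}^{n-1} \ell^{a, n-1-a}$, so that both $A_k$ and $C_k$ are partial sums of the same sequence $\{\dim \Gr_F^p H^{n-1}(L)\}_{p = 0}^{n-1}$, taken at opposite ends of the Hodge filtration.

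At this point Lemma~\ref{Steenbrinkinequal} with $i = n-1$ reads $\sum_{p = n-1-k}^{n-1} \ell^{p, n-1-p} \le A_k$, and the left-hand side equals $\ell^{n-1-k, k} + C_k$. Therefore
$$\sum_{p=0}^k (s_{n-p} - s_p) = A_k - C_k \ge \ell^{n-1-k, k} \ge 0,$$
which yields the desired inequality. If equality holds, then $\ell^{n-k-1, k} = 0$, and Serre duality $\ell^{p, q} = \ell^{n-p, n-q-1}$ immediately gives $\ell^{k+1, n-k-1} = \ell^{n-k-1, k} = 0$.

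The main difficulty is really only the bookkeeping of indices; the conceptual content is that $\sum_{p=0}^k (s_{n-p} - s_p)$ measures exactly the asymmetry of the Hodge filtration on $H^{n-1}(L)$ between its bottom $k+1$ and its top $k$ graded pieces, which is what Steenbrink's inequality controls. The one slightly delicate point is the vanishing $\ell^{0, n} = 0$ via semipurity, which produces a range $[n-k, n-1]$ for $C_k$ rather than $[n-1-k, n-1]$; this leaves the extra nonnegative term $\ell^{n-1-k, k}$ that sharpens the bare inequality into the stated equality criterion.
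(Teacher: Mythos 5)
Your proof is correct and follows essentially the same route as the paper's: express $\sum_{p\le k}(s_{n-p}-s_p)$ in link invariants via Proposition~\ref{boundlink}(ii), reindex so that both sums live on $H^{n-1}(L)$, and apply Lemma~\ref{Steenbrinkinequal} at $i=n-1$, with the reindexing mismatch producing exactly the leftover term $\ell^{n-k-1,k}$ that gives the equality criterion. The only cosmetic difference is that the paper starts from $s_{n-p}-s_p=\ell^{p,n-p-1}-\ell^{n-p,p-1}$, so the $p=0$ boundary term is $\ell^{n,-1}=0$ for trivial degree reasons, whereas you start from $\ell^{p,n-p-1}-\ell^{p,n-p}$ and must argue $\ell^{0,n}=0$; note that by the paper's definition $\ell^{0,n}=\dim\Gr_F^0H^n(L)$, so your identification of it with $\dim\Gr_F^nH^{n-1}(L)$ implicitly uses the Serre duality $\ell^{0,n}=\ell^{n,-1}$ (equivalently $H^n(L)\cong (H^{n-1}(L))\spcheck(-n)$), after which semipurity does give the vanishing.
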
 
 \begin{proof}
 Since $s_{n-p}-s_p =\ell^{p,n-p-1}-\ell^{n-p,p-1}$,   
\begin{align*}
\sum_{p=0}^k(s_{n-p}-s_p) &=  \sum_{p=0}^k\ell^{p,n-p-1} - \sum_{p=0}^k\ell^{n-p,p-1} \\
&=  \sum_{r=0}^k\ell^{r,n-r-1} - \sum_{r=0}^k\ell^{n-r-1,r} + \ell^{n-k-1,k}  \ge 0, 
\end{align*}
by Lemma~\ref{Steenbrinkinequal} for the case $i=n-1$, setting $p = r+1$ and adjusting for the extra term $r=k$ in the sum. Equality holds $\iff$ $\sum_{r=0}^k\ell^{r,n-r-1} = \sum_{r=0}^k\ell^{n-r-1,r}$ and  $\ell^{n-k-1,k}=0$.
\end{proof}

\section{$k$-rational implies $k$-Du Bois}\label{section3} 

The goal of this section is to give a quick proof of Theorem~\ref{thm5} in the case of an isolated singularity, motivated by Steenbrink's proof for the case $k=0$  \cite[Proposition 3.7]{Steenbrink}. As noted in the introduction,  this result has already been proved in \cite{FL-DuBois} (by a different method, inspired by \cite{Kovacs99}), and proofs in the non-isolated lci case have been given in \cite{FL-DuBois} and \cite{MP-rat}. We include this proof as some evidence that the result might hold in general without the lci assumption. 
We begin with a result  that holds for a general isolated singularity, and which is an analogue of \cite[Lemma 2.14]{Steenbrink}:

\begin{lemma}\label{linkexseq} For all $i \ge 0$, the map
$$\mathbb{H}^i(\hX; \Omega^\bullet_{\hX}(\log E)(-E)/\sigma^{\ge k+1}) \to \mathbb{H}^i(\hX; \Omega^\bullet_{\hX}(\log E)/\sigma^{\ge k+1})$$
is injective.\end{lemma}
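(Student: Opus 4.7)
The plan is to recast the injectivity claim as a surjectivity statement via an appropriate long exact sequence, and then deduce that surjectivity by chasing a commutative square in which the only non-formal input is the standard $E_1$-degeneration of the Hodge-to-de Rham spectral sequence for the link.

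Concretely, I would begin by quotienting the short exact sequence $0 \to \Omega^\bullet_{\hX}(\log E)(-E) \to \Omega^\bullet_{\hX}(\log E) \to \Omega^\bullet_{\hX}(\log E)|E \to 0$ of Lemma~\ref{normalcrossingbasics}(iii) termwise by the stupid filtration $\sigma^{\ge k+1}$ (a subcomplex of each term). This produces a short exact sequence of complexes
\[
0 \to \Omega^\bullet_{\hX}(\log E)(-E)/\sigma^{\ge k+1} \to \Omega^\bullet_{\hX}(\log E)/\sigma^{\ge k+1} \to \Omega^\bullet_{\hX}(\log E)|E/\sigma^{\ge k+1} \to 0.
\]
In the associated long exact sequence on hypercohomology, injectivity of the first map on $\mathbb{H}^i$ is equivalent to the vanishing of the preceding connecting morphism, hence to surjectivity of the restriction map
\[
\bar r\colon \mathbb{H}^{i-1}\bigl(\Omega^\bullet_{\hX}(\log E)/\sigma^{\ge k+1}\bigr) \to \mathbb{H}^{i-1}\bigl(\Omega^\bullet_{\hX}(\log E)|E/\sigma^{\ge k+1}\bigr).
\]

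To prove that $\bar r$ is surjective, I would place it in the evident commutative square
\[
\begin{CD}
\mathbb{H}^{i-1}(\Omega^\bullet_{\hX}(\log E)) @>r>> \mathbb{H}^{i-1}(\Omega^\bullet_{\hX}(\log E)|E) \\
@VVV @V{q}VV \\
\mathbb{H}^{i-1}(\Omega^\bullet_{\hX}(\log E)/\sigma^{\ge k+1}) @>{\bar r}>> \mathbb{H}^{i-1}(\Omega^\bullet_{\hX}(\log E)|E/\sigma^{\ge k+1})
\end{CD}
\]
induced by restriction to $E$ and by the quotient map. The top arrow $r$ is an isomorphism: by Lemma~\ref{convzero}(ii), both sides are canonically $H^{i-1}(L)$. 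For the right vertical $q$, the long exact sequence attached to $0 \to \sigma^{\ge k+1}\Omega^\bullet_{\hX}(\log E)|E \to \Omega^\bullet_{\hX}(\log E)|E \to \Omega^\bullet_{\hX}(\log E)|E/\sigma^{\ge k+1} \to 0$ shows that $q$ is surjective if and only if $\mathbb{H}^i(\sigma^{\ge k+1}\Omega^\bullet_{\hX}(\log E)|E) \to H^i(L)$ is injective. But this last injectivity is precisely the $E_1$-degeneration of the stupid-filtration spectral sequence for $\Omega^\bullet_{\hX}(\log E)|E$, which is built into the description of the Hodge filtration on $H^\bullet(L)$ recorded in Lemma~\ref{convzero}(iii) (cf.\ \cite[Theorem 6.9]{PS}).

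Commutativity of the square together with $r$ an isomorphism and $q$ surjective forces $\bar r$ to be surjective, which by the earlier reformulation is exactly what is needed. The only input beyond diagram chasing is the $E_1$-degeneration that gives surjectivity of $q$, and I would expect this to be the one place where the mixed Hodge theory of the link really enters; everything else is formal manipulation of the defining short exact sequence $0 \to \Omega^\bullet_{\hX}(\log E)(-E) \to \Omega^\bullet_{\hX}(\log E) \to \Omega^\bullet_{\hX}(\log E)|E \to 0$ and Lemma~\ref{convzero}.
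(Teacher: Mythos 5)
Your proof is correct and follows essentially the same route as the paper: both reduce injectivity to surjectivity of the restriction map on the $\sigma^{\ge k+1}$-quotients, chase the same commutative square using Lemma~\ref{convzero}(ii) for the top isomorphism, and invoke the $E_1$-degeneration underlying the Hodge filtration on $H^\bullet(L)$ for surjectivity of the right-hand vertical map (which the paper compresses into ``by Hodge theory''). Your write-up merely makes explicit the index shift and the reason for that surjectivity.
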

\begin{proof} This holds $\iff$ for all $i\ge 0$, the natural map 
$$\mathbb{H}^i(\hX; \Omega^\bullet_{\hX}(\log E)/\sigma^{\ge k+1}) \to \mathbb{H}^i(\hX; \Omega^\bullet_{\hX}(\log E)|E/\sigma^{\ge k+1})$$ is surjective. Consider the commutative diagram
$$\begin{CD}
\mathbb{H}^i(\hX; \Omega^\bullet_{\hX}(\log E )) @>{\cong}>> H^i(L) \cong  \mathbb{H}^i(\hX; \Omega^\bullet_{\hX}(\log E)|E) \\
@VVV @VVV\\
\mathbb{H}^i(\hX; \Omega^\bullet_{\hX}(\log E)/\sigma^{\ge k+1}) @>>> \mathbb{H}^i(\hX; \Omega^\bullet_{\hX}(\log E)|E/\sigma^{\ge k+1}),
\end{CD}$$
where the upper horizontal arrow is an isomorphism by Lemma~\ref{convzero}(ii). 
By Hodge theory, the right hand vertical arrow is surjective.  Hence 
$$\mathbb{H}^i(\hX; \Omega^\bullet_{\hX}(\log E)/\sigma^{\ge k+1}) \to \mathbb{H}^i(\hX; \Omega^\bullet_{\hX}(\log E)|E/\sigma^{\ge k+1})$$ is surjective as well.
\end{proof}

\begin{theorem} Let $X$ be an isolated $k$-rational singularity. Then $X$ is $k$-Du Bois.
\end{theorem}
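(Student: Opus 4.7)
My plan is to adapt Steenbrink's argument for the case $k=0$ cited in the introduction, using Lemma~\ref{convzero}(iii) as the main computational tool and Lemma~\ref{linkexseq} as the bridge between the $\log E$ and $\log E(-E)$ worlds. By Theorem~\ref{thm4}, the $k$-rational hypothesis gives $\Omega^p_X\cong R^0\pi_*\Omega^p_{\hX}(\log E)$ and $H^q(\hX;\Omega^p_{\hX}(\log E))=0$ for all $p\le k$ and $q>0$; in particular $X$ is (classically) rational, hence normal. By Theorem~\ref{thm2}, to prove the $k$-Du Bois property it is enough to establish that $H^q(\hX;\Omega^p_{\hX}(\log E)(-E))=0$ for every $p\le k$ and $q>0$, since the compatibility $\Omega^p_X\cong\mathcal{H}^0\uOp_X$ will then follow from the already known iso $\Omega^p_X\cong\pi_*\Omega^p_{\hX}(\log E)$ together with the standard factorization of $\phi^p$ through $R\pi_*\Omega^p_{\hX}(\log E)$.

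The first concrete step is to feed the $k$-rational vanishing into the long exact sequence of Lemma~\ref{convzero}(iii). For $p\le k$ and $q\ge 2$, the terms $H^{q-1}(\hX;\Omega^p_{\hX}(\log E))$ and $H^q(\hX;\Omega^p_{\hX}(\log E))$ both vanish, and one obtains
$$H^q(\hX;\Omega^p_{\hX}(\log E)(-E))\cong \Gr^p_F H^{p+q-1}(L)=\ell^{p,q-1},$$
while Theorem~\ref{GNAPS} handles the range $p+q>n$. For $q=1$ the sequence reduces to
$$0\to \pi_*(\Omega^p_{\hX}(\log E)(-E))\to \Omega^p_X\to \Gr^p_F H^p(L)\to H^1(\hX;\Omega^p_{\hX}(\log E)(-E))\to 0.$$
So the target becomes: prove that $\ell^{p,r}=0$ for $p\le k$ and $1\le r\le n-p-1$, and that $\Omega^p_X\to \Gr^p_F H^p(L)$ is surjective for $p\le k$.

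To produce these vanishings I would use Lemma~\ref{linkexseq}. Under the $k$-rational assumption, $R\pi_*(\Omega^\bullet_{\hX}(\log E)/\sigma^{\ge k+1})$ is quasi-isomorphic to the truncated K\"ahler de Rham complex $\Omega^\bullet_X/\sigma^{\ge k+1}$ on the Stein space $X$, so its hypercohomology is computed on global sections and in particular vanishes in degrees $>k$. The injection of Lemma~\ref{linkexseq} then forces
$$\mathbb{H}^i(\hX;\Omega^\bullet_{\hX}(\log E)(-E)/\sigma^{\ge k+1})=0\qquad\text{for }i>k.$$
The spectral sequence $E_1^{p,q}=H^q(\hX;\Omega^p_{\hX}(\log E)(-E))$ (nonzero only for $0\le p\le k$) abuts to this hypercohomology, and comparing it with the analogous degenerate spectral sequence for the $\log E$ version (whose $E_1^{p,q}=0$ for $q>0$ by $k$-rationality) I expect to show that the differentials must force each $E_1^{p,q}$ with $p\le k$, $q\ge 1$ to vanish, using semipurity (Theorem~\ref{semipure}) to control which Hodge pieces of $H^*(L)$ can appear.

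The main obstacle will be this last spectral-sequence comparison: the hypercohomology bound alone only controls the $E_\infty$-page in certain degrees, so to pin down individual $E_1^{p,q}$ terms I need the injection of Lemma~\ref{linkexseq} to be compatible with the spectral sequences filtration-by-filtration, and to identify the map $\Omega^p_X\to\Gr^p_F H^p(L)$ as the $q=1$ boundary and show it is surjective. Once this compatibility is in place, the vanishing of the $\ell^{p,r}$ (equivalently the vanishing of $b^{p,q}$) falls out from the abutment control together with the semipurity of $H^{p+q-1}(L)$ for $p+q-1\le n-1$, completing the argument.
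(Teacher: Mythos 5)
Your overall strategy --- passing to the truncated complexes $\Omega^\bullet_{\hX}(\log E)/\sigma^{\ge k+1}$ and $\Omega^\bullet_{\hX}(\log E)(-E)/\sigma^{\ge k+1}$ and using the injectivity of Lemma~\ref{linkexseq} to transfer the vanishing of hypercohomology from the first to the second --- is the right one, and it is the paper's. But there is a genuine gap at exactly the step you flag. The vanishing $\mathbb{H}^i(\hX;\Omega^\bullet_{\hX}(\log E)(-E)/\sigma^{\ge k+1})=0$ for $i>k$ only tells you that $E_\infty^{p,q}=0$ for $p+q>k$; to recover the vanishing of the individual terms $E_1^{p,q}=H^q(\hX;\Omega^p_{\hX}(\log E)(-E))$ you must rule out cancellations via the differentials $d_r$ among the columns $p=0,\dots,k$, and neither of your proposed remedies accomplishes this. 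Injectivity of a filtered map on the abutments does not descend to the $E_1$ (or even the $E_\infty$) page without strictness, and semipurity (Theorem~\ref{semipure}) constrains the \emph{weight} graded pieces of $H^i(L)$, not the Hodge numbers $\ell^{p,r}$, $1\le r\le n-p-1$, that your reduction via Lemma~\ref{convzero}(iii) asks you to kill; indeed, under the $k$-rational hypothesis that exact sequence identifies $\ell^{p,r}$ with $b^{p,r+1}$ for $r\ge 1$, so the vanishing you reduce to is equivalent to the conclusion you are trying to prove, and no independent leverage is gained there.

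The missing idea is to induct on $k$. Granting the statement for $k-1$, the $k$-rational hypothesis makes $X$ $(k-1)$-rational, hence $(k-1)$-Du Bois, so in the $E_1$ page of the spectral sequence for $\Omega^\bullet_{\hX}(\log E)(-E)/\sigma^{\ge k+1}$ every column $p\le k-1$ vanishes in rows $q>0$. The surviving positive-row terms $E_1^{k,q}$ then admit no nonzero differentials: targets lie in columns $p>k$, which are truncated away, and sources lie in columns $p<k$ with positive row, which vanish by induction. Hence $E_1^{k,q}=E_\infty^{k,q}$ is a subquotient of $\mathbb{H}^{k+q}(\hX;\Omega^\bullet_{\hX}(\log E)(-E)/\sigma^{\ge k+1})=0$ for $q>0$, which is exactly the vanishing $H^q(\hX;\Omega^k_{\hX}(\log E)(-E))=0$ you need. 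The remaining statement about $R^0\pi_*$ (equivalently about $\mathcal{H}^0\uOp_X$) follows, as you indicate, by factoring the isomorphism $\Omega^k_X\to R^0\pi_*\Omega^k_{\hX}(\log E)$ through the injection $R^0\pi_*\Omega^k_{\hX}(\log E)(-E)\hookrightarrow R^0\pi_*\Omega^k_{\hX}(\log E)$. With the induction in place your argument closes; without it, the spectral-sequence comparison cannot isolate the individual invariants $b^{p,q}$.
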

\begin{proof} The proof is by induction on $k$. The case $k=0$ is an argument of Steenbrink (and is a special case of the arguments below). Assume inductively that the theorem has been proved for all $j< k$. In particular, we  assume  that $\Omega _X^p \to R^0\pi_*\Omega^p_{\hX}(\log E)$ is an isomorphism,   $H^q(\hX; \Omega^p_{\hX}(\log E)) = 0$ for all $p\le k$ and all $q>0$,  and that, for all $p\le k-1$,   $\Omega _X^p \to  R^0\pi_*\Omega^p_{\hX}(\log E)(-E)$ is an isomorphism and   $H^q(\hX; \Omega^p_{\hX}(\log E)(-E)) = 0$ all $q>0$. We have to show that    $\Omega _X^k \to  R^0\pi_*\Omega^k_{\hX}(\log E)(-E)$ is an isomorphism and that $H^q(\hX; \Omega^k_{\hX}(\log E)(-E)) = 0$ for all $q>0$. Note that the statement on $R^0\pi_*$ is automatic: the isomorphism $\Omega _X^k \to R^0\pi_*\Omega^k_{\hX}(\log E)$ factors as $\Omega _X^k \xrightarrow{f}  R^0\pi_*\Omega^k_{\hX}(\log E)(-E) \xrightarrow{g}  R^0\pi_*\Omega^k_{\hX}(\log E)$, where $g$ is injective. Since $g\circ f$ is an isomorphism, $f$ is an isomorphism.

 Suppose that $X$ has $k$-rational singularities. We have  the spectral sequence converging to $\mathbb{H}^i(\hX; \Omega^\bullet_{\hX}(\log E)/\sigma^{\ge k+1})$ with $E_1$ page $H^q(\hX; \Omega^p_{\hX}(\log E))$ for $p\le k$ and $0$ otherwise. By assumption, $E_1^{p,q} =0$ for $q>0$. Hence  $\mathbb{H}^{k+q}(\hX; \Omega^\bullet_{\hX}(\log E)/\sigma^{\ge k+1}) =0$ for $q > 0$. By Lemma~\ref{linkexseq},
$$\mathbb{H}^i(\hX; \Omega^\bullet_{\hX}(\log E)(-E)/\sigma^{\ge k+1}) \to \mathbb{H}^i(\hX; \Omega^\bullet_{\hX}(\log E)/\sigma^{\ge k+1})$$ is injective for $i >0$, and thus $\mathbb{H}^{k+q}(\hX; \Omega^\bullet_{\hX}(\log E)(-E)/\sigma^{\ge k+1}) =0$  if $q >0$ as well. By the inductive assumption on $k$, the only nonzero terms in the $E_1$ page of the   spectral sequence converging to  $\mathbb{H}^i(\hX; \Omega^\bullet_{\hX}(\log E)(-E)/\sigma^{\ge k+1})$ are the terms $H^0(\hX; \Omega^p_{\hX}(\log E)(-E))$ for $p\leq k$,  and $H^q(\hX; \Omega^k_{\hX}(\log E)(-E))$. By examination of the spectral sequence, it follows that, for $q>0$,
 $$H^q(\hX; \Omega^k_{\hX}(\log E)(-E)) = \mathbb{H}^{k+q}(\hX; \Omega^\bullet_{\hX}(\log E)(-E)/\sigma^{\ge k+1}) =0$$   This completes the proof.
\end{proof}

\section{The mixed Hodge structure on the Milnor fiber}\label{section4}

We begin by recalling some standard results about smoothings of isolated singularities, for which a general reference is \cite{Looijengabook}. In particular, we note the following:
\begin{enumerate}
\item If $\rho\colon (\cX, x) \to \Delta$ is a smoothing of the germ $(X,x)$, then it is possible to choose good Stein representatives for both $X$ and $\cX$.
\item If $X$ has an isolated lci singularity at $x$, then the singularity of $\cX$ is also lci \cite[Proposition 6.10]{Looijengabook}. 
\end{enumerate}
We will be interested in the case where $\rho\colon \cX \to \Delta$ is a semistable smoothing, in the terminology of \cite[Definition p.\ 520]{Steenbrink}. Thus,  $\dim \cX = n+1$ and $\cX$ has an isolated lci singularity at $x$. By assumption, there exists a log resolution $\nu\colon \hcX \to \cX$ with exceptional divisor $\cE$, such that the proper transform $\hX$ is a log resolution of $X$ with exceptional divisor $E$. Moreover, the fiber $\hcX_0$ over $0\in \Delta$ is $\hX + \cE$ (as a reduced divisor with simple normal crossings), with $\hX \cap \cE = E$.  
In our applications, we will not need to have the  precise information in (1) and (2) above.  For example, as far as (2) is concerned, for the applications in this section concerning the Milnor fiber of $X$, if $(X,x)$ is lci and locally defined inside $(\Cee^N, 0)$ by $N-n$ equations $f_1=\cdots =  f_{N-n} =0$, we can compute the Milnor fiber by taking  an explicit deformation in $(\Cee^N, 0) \times (\Delta, 0)$ defined by $f_1-t=\cdots =  f_{N-n}-t =0$, and then pass to an appropriate base change $f_1-t^A=\cdots =  f_{N-n}-t^A =0$ for some positive integer $A$. In this construction, $\cX$ clearly has an isolated lci singularity at $0$. (However, (2) is used tacitly in the statement of Theorem~\ref{semistablesmooth}.) Likewise, for (1) above, we will see that it is not necessary to make careful choices of good Stein representatives for $X$ and $\cX$.

  Let $M$ be the Milnor fiber, so that $M$ is identified as a topological space with $\cX_t$, $t\neq 0$. Moreover, $H^k(M) \neq 0$ for $k\neq 0,n$.  Let $\cL$ be the link of $\cX$, i.e.\ $\cL = \cX -\{x\} = \hcX -\cE$. Then $H^i(M)$ and $H^i(\cL)$ carry mixed Hodge structures, and we want to describe the corresponding Hodge filtrations in more detail. The following lemma  is just an exposition of \cite[(2.6)(b)]{Steenbrink}:

\begin{lemma}\label{Wang} There is an exact sequence of mixed Hodge structures:
$$\cdots  \to H^i(\cL) \to H^i(M) \xrightarrow{V} H^i_c(M)(-1) \to H^{i+1}(\cL) \to \cdots. $$
\end{lemma}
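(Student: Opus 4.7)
The plan is to derive the stated sequence from the classical Wang long exact sequence of the Milnor fibration $\rho|_{\cL}\colon \cL \to \Delta^*$, with the monodromy difference $T-I$ replaced by the variation map $V$. The Tate twist $(-1)$ will record the fact that $V$ shifts the Hodge filtration by one step; establishing this shift is the subtle Hodge-theoretic content of the lemma.

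First I would observe that $\cL = \cX - \{x\}$ is smooth (since $x$ is the only singular point of $\cX$), and the restriction of the smoothing is a locally trivial $C^\infty$ fibration $\rho\colon \cL \to \Delta^*$ with fiber diffeomorphic to the Milnor fiber $M$, whose topological boundary is the link $L$ of $X$. Because $\Delta^*\simeq S^1$ has cohomological dimension one, the Leray spectral sequence degenerates at $E_2$ and produces the Wang exact sequence
$$\cdots \to H^i(\cL) \to H^i(M) \xrightarrow{T-I} H^i(M) \to H^{i+1}(\cL) \to \cdots,$$
where $T$ is the geometric monodromy.

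Next, I would choose the trivialization so that $T$ restricts to the identity on $\partial M = L$, which is possible via a boundary collar and a cutoff. Then $(T-I)\alpha$ vanishes on the boundary, so $T-I$ factors as $H^i(M) \xrightarrow{V} H^i(M, L) \xrightarrow{\mathrm{can}} H^i(M)$, where $V$ is the variation and $H^i(M,L) \cong H^i_c(M)$ by Poincar\'e--Lefschetz duality for the oriented manifold with boundary $M$. To substitute $V$ for $T-I$ in the Wang sequence, I would run the same Leray argument on the relative Milnor fibration $(M,L) \to (\cL, L\times\Delta^*) \to \Delta^*$, whose boundary sub-bundle is trivial since $T|_L = \mathrm{id}$; alternatively one can extract it from a diagram chase combining the original Wang sequence with the long exact sequence of the pair $(M,L)$.

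Finally, to upgrade this to a sequence of mixed Hodge structures, I would invoke that $H^i(M)$ carries Steenbrink's limit MHS coming from the semistable model $\hcX$, that $H^i(\cL)$ carries the MHS provided by the log resolution $\nu\colon \hcX \to \cX$ (parallel to Lemma~\ref{convzero} in the central-fiber case), and that $H^i_c(M)$ inherits its natural MHS. The map $V$ is of Hodge type $(-1,-1)$ because it is induced by (a rescaling of) the logarithm of monodromy, and this is precisely what forces the Tate twist $(-1)$ on the target. The main obstacle I expect is exactly this Hodge-theoretic compatibility: the topological Wang--variation framework is classical, but verifying that every arrow is a morphism of MHS with the correct twist requires either Steenbrink's detailed analysis of the limit MHS (as in \cite[(2.6)(b)]{Steenbrink}) or Saito's formalism of vanishing cycles, in which the variation morphism comes equipped with the Tate twist by construction.
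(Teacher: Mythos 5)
There is a genuine gap, on two levels. First, the topological setup is off: $\cL=\cX-\{x\}$ is \emph{not} a fiber bundle over $\Delta^*$ with fiber $M$, since $\rho$ maps $\cL$ onto all of $\Delta$ and the central fiber is $U=X-\{x\}\neq\emptyset$. The Leray/Wang argument you describe applies to $\cX^*=\cX-X=\hcX-\hcX_0$ and produces the classical sequence $\cdots\to H^i(\cX^*)\to H^i(M)\xrightarrow{T-I}H^i(M)\to\cdots$, which is exactly the sequence the paper writes down first and then discards, because its terms are not the ones in the lemma and it is not (visibly) a sequence of mixed Hodge structures. Passing from $H^i(\cX^*)$ to $H^i(\cL)$ and from $T-I$ to $V$ with target $H^i_c(M)$ requires an extra input --- either the Gysin sequence for the smooth divisor $U\subset\cL$ spliced against the Wang sequence, or the open book decomposition of the link of $(\cX,x)$ with binding $L$ and page $M$ --- and your sketch of this step (a ``relative Milnor fibration $(M,L)\to(\cL,L\times\Delta^*)\to\Delta^*$'') again presupposes that $\cL$ fibers over $\Delta^*$. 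The intended topological statement is classical and correct, but as written the derivation does not go through.

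Second, and more seriously, the Hodge-theoretic assertion --- that every arrow is a morphism of MHS and that $V$ lands in $H^i_c(M)(-1)$ --- is the actual content of the lemma, and your proposal does not prove it. The justification offered (``$V$ is induced by a rescaling of the logarithm of monodromy'') is not correct: $N=\log T_u$ is an endomorphism of $H^i(M)$, whereas $V$ maps $H^i(M)$ to the different group $H^i(M,\partial M)\cong H^i_c(M)$ and is related to $T-I$ only through composition with the canonical map; no twist can be read off this way. Your closing sentence defers precisely this point to \cite[(2.6)(b)]{Steenbrink}, which is the result the lemma is an exposition of, so the proposal is circular where it matters. The paper's route is different and avoids both problems at once: it realizes $H^i(M)$, $H^i_c(M)$ and $H^i(\cL)$ as hypercohomology of filtered complexes supported on $\cE$ ($K^\bullet$, $K^\bullet_c$ and $\Omega^\bullet_{\hcX}(\log \cE)|\cE$, with the naive filtrations computing the Hodge filtrations), and then exhibits, by a local computation in coordinates, a short exact sequence of complexes
$$0 \to K^{\bullet-1}_c \to \Omega^\bullet_{\hcX}(\log \cE)|\cE \to K^\bullet \to 0,$$
whose hypercohomology long exact sequence is the statement; the degree shift $\bullet-1$ is what produces the Tate twist, and compatibility with the filtrations makes the MHS claim automatic. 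To repair your argument you would either need to reproduce this filtered-complex construction or work in Saito's formalism where $\operatorname{var}$ comes with its twist by definition; the purely topological Wang-plus-variation skeleton cannot supply the Hodge structure on its own.
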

\begin{proof}  For clarity, first consider the Wang sequence for the fibration $\hcX^* \to \Delta^*$, where $\hcX^* = \hcX -\hcX_0$.   This arises by considering the sequence of relative differentials
$$0\to \rho^*\Omega^1_\Delta(\log 0)|\hcX_0 = \scrO_{\hcX_0} \to \Omega^1_{\hcX}(\log \hcX_0)|\hcX_0 \to \Omega^1_{\hcX/\Delta}(\log \hcX_0) |\hcX_0\to 0.$$
Taking exterior powers gives:
$$0\to \Omega^{\bullet -1}_{\hcX/\Delta}(\log \hcX_0)|\hcX_0 \to \Omega^\bullet_{\hcX}(\log \hcX_0)|\hcX_0 \to \Omega^\bullet_{\hcX/\Delta}(\log \hcX_0) |\hcX_0\to 0,$$
Then taking  hypercohomology essentially gives the Wang sequence for the   fiber bundle over $\Delta^*$, which is homotopy equivalent to $S^1$, with fiber $\hcX_t$: 
$$\cdots  \to H^i(\hcX^* ) \to H^i(\hcX_t) \to H^i(\hcX_t) \to H^{i+1}(\hcX^*) \to \cdots. $$ 
As it stands,   the Wang sequence does not yield an exact sequence of  mixed Hodge structures because $\hcX_0$ is not compact. Instead, define
\begin{align*}
K^\bullet &= \Omega^\bullet_{\hcX/\Delta}(\log (\hX + \cE))/ \Omega^\bullet_{\hcX/\Delta}(\log (\hX + \cE))(-\cE) =  \Omega^\bullet_{\hcX/\Delta}(\log (\hX + \cE))|\cE;\\
K^\bullet_c &= \Omega^\bullet_{\hcX/\Delta}(\log (\hX + \cE))(-\hX) /\Omega^\bullet_{\hcX/\Delta}(\log (\hX + \cE))(-\hX-\cE) = K^\bullet \otimes [\scrO_{\hcX}(-\hX)/ \scrO_{\hcX}(-\hX-\cE)].
\end{align*} 
Note that these complexes are supported on $\cE$, and there is a perfect pairing
$$K^p\otimes K_c^{n-p} \to \Omega^n_{\hcX/\Delta}(\log (\hX + \cE))(-\hX)|\cE \cong \Omega^{n+1}_{\hcX}( \hX + \cE)(-\hX)|\cE \cong\omega_{\cE}.$$
In particular, $K^\bullet$ and $K^\bullet_c$ do not depend on the choice of a good Stein representative for $\cX$ or $X$. 
It is a result of  Steenbrink \cite[(2.7)]{Steenbrink} that $H^i(M)=\mathbb{H}^i(\cE;K^\bullet)$ and $H^i_c(M)=\mathbb{H}^i(\cE;K^\bullet_c)$. Moreover, the Hodge filtrations on $H^i(M)$ and $H^i_c(M)$ are induced by the naive filtrations on $K^\bullet$ and $K^\bullet_c$ respectively. Choose coordinates so that $\cE$ is defined by $z_1\cdots z_a =0$ and $\hX$ by $z_{a+1}=0$. In degree one, $K^1$ is the quotient of $\Omega^1_{\hcX}(\log (\hX + \cE))|\cE $ by the relation $\Dis\sum_{i=1}^{a+1}\frac{dz_i}{z_i} =0$. Then the subsheaf $\Omega^1_{\hcX}(\log \cE)|\cE $ surjects onto $K^1$ and the kernel is the intersection
$\Dis\scrO_{\cE}\cdot\left[\sum_{i=1}^{a+1}\frac{dz_i}{z_i}\right] \cap \Omega^1_{\hcX}(\log \cE)|\cE$. Clearly $\Dis f\cdot \left(\sum_{i=1}^{a+1}\frac{dz_i}{z_i}\right) $ is a local section of $\Omega^1_{\hcX}(\log \cE)|\cE$ $\iff$ $\Dis f\,\frac{dz_{a+1}}{z_{a+1}}$ is holomorphic $\iff$ $f\in I_{\hX}\scrO_{\cE}$. Thus, there is an exact sequence
$$0 \to \scrO_{\hcX}(-\hX)/ \scrO_{\hcX}(-\hX-\cE) \to \Omega^1_{\hcX}(\log \cE)|\cE \to K^1 \to 0.$$
Then taking exterior powers gives 
$$0 \to K^{\bullet-1}_c \to \Omega^\bullet_{\hcX}(\log \cE)|\cE  \to K^\bullet \to 0.$$
The associated long exact sequence of hypercohomology then gives the long exact sequence of Lemma~\ref{Wang}.
\end{proof}

\begin{corollary}\label{exseqMilnorcE} Suppose that $X$ is an lci of dimension $n$. Then there is a self-dual exact sequence of mixed Hodge structures
$$0\to H^n(\cL) \to H^n(M) \xrightarrow{V} H^n_c(M)(-1) \to H^{n+1}(\cL) \to 0. $$
Moreover, if 
\begin{align*}
s_k &= \dim \Gr_F^kH^n(M) = \dim H^{n-k}(K^k);\\
 \ell^{k,n-k}(\cX) &= \dim \Gr_F^kH^n(\cL)=\dim H^{n-k}(\Omega^k_{\hcX}(\log \cE)|\cE),
 \end{align*}
  then $\ell^{k,n-k}(\cX) \le s_k$ and 
$$s_k - s_{n-(k-1)} = \ell^{k,n-k}(\cX) - \ell^{k,n-k+1}(\cX)= \dim H^{n-k}(\Omega^k_{\hcX}(\log \cE)|\cE) - \dim H^{n-k+1}(\Omega^k_{\hcX}(\log \cE)|\cE).$$
\end{corollary}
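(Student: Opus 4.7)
The plan is to derive the four-term exact sequence directly from Lemma~\ref{Wang} by inserting standard vanishings for the Milnor fiber. Since $X$ has an isolated lci singularity, Hamm's theorem gives $M$ the homotopy type of a bouquet of $n$-spheres, so $H^i(M) = 0$ for $i \notin \{0, n\}$; in particular $H^{n-1}(M) = H^{n+1}(M) = 0$. Poincar\'e duality on the oriented real $2n$-manifold $M$ gives $H^i_c(M) \cong H^{2n-i}(M)\spcheck$, so $H^i_c(M) = 0$ for $i \notin \{n, 2n\}$, which under the standing assumption $n\ge 2$ yields $H^{n-1}_c(M) = H^{n+1}_c(M) = 0$. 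Substituting these vanishings into Lemma~\ref{Wang} at indices $n-1, n, n+1$ produces exactly the claimed four-term exact sequence. For self-duality, combine $H^n_c(M) \cong H^n(M)\spcheck(-n)$ with Poincar\'e duality on the $(2n+1)$-dimensional link of $(\cX,x)$, namely $H^{n+1}(\cL) \cong H^n(\cL)\spcheck(-n-1)$; applying $\Hom(-,\Cee(-n-1))$ term by term to the four-term sequence reproduces the same sequence once the Tate twist $(-1)$ on the middle term is absorbed.

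The inequality $\ell^{k,n-k}(\cX)\le s_k$ then follows by applying $\Gr_F^k$ to the injection $H^n(\cL)\hookrightarrow H^n(M)$ and invoking strictness of morphisms of mixed Hodge structures with respect to the Hodge filtration. For the numerical identity, apply $\Gr_F^k$ to the full four-term sequence; by strictness, the resulting sequence of $\Cee$-vector spaces is again exact. A Tate twist by $(-1)$ shifts $F^\bullet$ by $+1$, so
\[
\Gr_F^k(H^n_c(M)(-1)) \;=\; \Gr_F^{k-1} H^n_c(M),
\]
and Poincar\'e duality on $M$ yields $\dim \Gr_F^{k-1} H^n_c(M) = \dim \Gr_F^{n-k+1} H^n(M) = s_{n-k+1}$. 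The alternating sum of dimensions in the graded exact sequence then reads
\[
\ell^{k,n-k}(\cX) - s_k + s_{n-k+1} - \ell^{k,n-k+1}(\cX) \;=\; 0,
\]
which rearranges to $s_k - s_{n-k+1} = \ell^{k,n-k}(\cX) - \ell^{k,n-k+1}(\cX)$. The hypercohomological expressions $s_k = \dim H^{n-k}(K^k)$ and $\ell^{k,n-k}(\cX) = \dim H^{n-k}(\Omega^k_{\hcX}(\log \cE)|\cE)$ are read off respectively from Steenbrink's identification of the Hodge filtration on $H^n(M)$ via the naive filtration on $K^\bullet$ (recalled in the proof of Lemma~\ref{Wang}) and from the analogue of Lemma~\ref{convzero}(iii) applied to the pair $(\hcX, \cE)$.

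The main obstacle is bookkeeping of Tate twists: one must carefully track how Poincar\'e duality on $M$ and on $\cL$, the twist $(-1)$ on the middle term of the sequence, and the corresponding shift on $F^\bullet$ interact. Conceptually the argument is entirely parallel to Proposition~\ref{boundlink}, with the pair $(\cX, \cL)$ playing the role previously played by $(X, L)$ in one complex dimension lower, so no essentially new geometric input is required beyond Lemma~\ref{Wang}.
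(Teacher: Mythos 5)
Your proof is correct and follows essentially the same route as the paper's: the four-term sequence is extracted from Lemma~\ref{Wang} using $H^i(M)=0$ for $i\neq 0,n$, and the inequality and numerical identity come from applying $\Gr_F$ to the exact sequence, using strictness together with the duality $H^n_c(M)\cong H^n(M)\spcheck(-n)$. You have merely made explicit the Tate-twist bookkeeping that the paper leaves to the reader.
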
 
\begin{proof}  The first statement is clear since  $H^i(M)=0$ for $i\neq 0, n$. Then, as in the proof of Proposition~\ref{boundlink}, the remaining statements  are easy consequences  of the exactness of the sequence in (i) and the fact that morphisms of mixed Hodge structures are strict with respect to the Hodge filtration, together with  the fact that  $H^n(M)$ and $H^n_c(M)(-1)$ are dual mixed Hodge structures. 
\end{proof}

\begin{corollary}\label{nminuspvsp2}  Let $X$ be an isolated lci of dimension $n$ with semistable smoothing $\cX$, and let $\ell^{p,q}(\cX)$ denote the link invariant of $\cX$. Then  $\Dis \sum_{p=0}^{k-1}s_{n-p} \le \sum_{p=0}^ks_p$. If equality holds, then $\ell^{n-k, k}(\cX) = \ell^{k+1, n-k}(\cX) = 0$. 
\end{corollary}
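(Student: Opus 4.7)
The plan is to mimic the argument of Proposition~\ref{nminuspvsp}, substituting the exact sequence of Corollary~\ref{exseqMilnorcE} for the smoothing $\cX$ in place of the Milnor fiber/link sequence used there. Starting from the identity
$$s_p-s_{n-p+1}=\ell^{p,n-p}(\cX)-\ell^{p,n-p+1}(\cX)$$
supplied by Corollary~\ref{exseqMilnorcE} (valid for every $p$, with the convention $s_p=0$ outside $[0,n]$), I would sum from $p=0$ to $p=k$. Using $s_{n+1}=0$ and the re-indexing $q=n-p+1$, the left hand side collapses to $\sum_{p=0}^{k}s_p-\sum_{p=0}^{k-1}s_{n-p}$, which is exactly the quantity we wish to bound below.

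For the right hand side, the Serre duality $\ell^{p,q}(\cX)=\ell^{n+1-p,n-q}(\cX)$ for the link $\cL$ of the $(n+1)$-dimensional singularity $\cX$ converts
$$\sum_{p=0}^{k}\ell^{p,n-p+1}(\cX)=\sum_{p=0}^{k}\ell^{n+1-p,p-1}(\cX)=\sum_{r=0}^{k-1}\ell^{n-r,r}(\cX),$$
where the $p=0$ term contributes $\ell^{n+1,-1}(\cX)=0$ and $r=p-1$ is the new index. Then I would apply Lemma~\ref{Steenbrinkinequal} to $\cL$ with $i=n$ (permissible since $n\le (n+1)-1$, so the semipurity hypothesis is satisfied) to get $\sum_{r=0}^{k}\ell^{n-r,r}(\cX)\le\sum_{p=0}^{k}\ell^{p,n-p}(\cX)$. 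Splitting off the $r=k$ term yields
$$\sum_{p=0}^{k}\ell^{p,n-p}(\cX)-\sum_{r=0}^{k-1}\ell^{n-r,r}(\cX)\ge\ell^{n-k,k}(\cX)\ge 0,$$
which, combined with the first paragraph, gives the desired inequality.

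For the equality statement, both non-negative contributions in the last display must vanish; in particular $\ell^{n-k,k}(\cX)=0$. The other vanishing comes for free from Serre duality: $\ell^{k+1,n-k}(\cX)=\ell^{n+1-(k+1),n-(n-k)}(\cX)=\ell^{n-k,k}(\cX)=0$. The main obstacle is not conceptual but organizational — verifying the starting identity by correctly tracking Poincar\'e duality on $M$ together with the Tate twist shift $\dim\Gr^p_F H^n_c(M)(-1)=s_{n-p+1}$, and checking that Lemma~\ref{Steenbrinkinequal} and Serre duality are indexed correctly for the link $\cL$, which has real dimension $2n+1$, one higher than the isolated singularities treated in Section~\ref{section2}.
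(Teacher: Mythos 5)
Your proposal is correct and follows essentially the same route as the paper: the paper likewise sums the identity $s_p-s_{n-(p-1)}=\ell^{p,n-p}(\cX)-\ell^{n-(p-1),p-1}(\cX)$ from Corollary~\ref{exseqMilnorcE}, re-indexes via Serre duality for the $(n+1)$-dimensional link, and applies Lemma~\ref{Steenbrinkinequal} with $i=n$, splitting off the extra term $\ell^{n-k,k}(\cX)$. Your bookkeeping (the vanishing of the $p=0$ boundary term and the Serre-duality identification $\ell^{k+1,n-k}(\cX)=\ell^{n-k,k}(\cX)$ in the equality case) is accurate.
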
 
\begin{proof} The proof is essentially the same as that for Proposition~\ref{nminuspvsp}. Since $\dim \cX = n+1$, 
$$s_k - s_{n-(k-1)} = \ell^{k,n-k}(\cX) - \ell^{k,n-k+1}(\cX)   = \ell^{k,n-k}(\cX) - \ell^{n-(k-1),k-1}(\cX).$$
Thus, summing over all $p\le k$,
\begin{align*}
\sum_{p\leq k}s_p - \sum_{p\leq k-1}s_{n-p} &= \sum_{p\leq k}\ell^{p,n-p}(\cX) - \sum_{p\leq k}\ell^{n-(p-1),p-1}(\cX)\\
&= \sum_{p\leq k}\ell^{k,n-k}(\cX) - \sum_{r\leq k}\ell^{n-r,r}(\cX) + \ell^{n-k, k}(\cX) \ge 0,
\end{align*}
where, as in the proof of Proposition~\ref{nminuspvsp},  we  use Lemma~\ref{Steenbrinkinequal}, but in this case  for the invariants   $\ell^{p,q}(\cX)$ and for $i = n < \dim \cX$, by setting $r = p+1$ and adjusting for the extra term $r=k$ in the sum.
\end{proof}

\begin{corollary}\label{boundk}  Let $X$ be an isolated lci of dimension $n$. Suppose that there exists a $k >   \frac12(n-1)$ such that $s_p = 0$ for all $p\le k$. Then $X$ is smooth.
\end{corollary}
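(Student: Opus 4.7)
The strategy is to use the symmetric pair of inequalities already established (Proposition~\ref{nminuspvsp} and Corollary~\ref{nminuspvsp2}) to deduce that vanishing of an ``initial block" of the $s_p$'s forces vanishing of a ``terminal block," and then to observe that if $k$ is large enough these two blocks together cover the entire range $0\le p\le n$. The vanishing of all $s_p$ then means the Milnor fiber has trivial reduced cohomology, from which smoothness follows by the classical theorem of Hamm that the Milnor fiber of an isolated lci singularity is a bouquet of $\mu$ copies of $S^n$.

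In more detail, assume $s_p=0$ for all $p\le k$, so that $\sum_{p=0}^k s_p=0$. Apply Corollary~\ref{nminuspvsp2} to obtain
\[
\sum_{p=0}^{k-1} s_{n-p}\;\le\;\sum_{p=0}^k s_p \;=\;0,
\]
which forces $s_{n-p}=0$ for every $0\le p\le k-1$, i.e.\ $s_q=0$ for all $q$ with $n-k+1\le q\le n$. Thus $s_q$ vanishes on the union of the intervals $[0,k]\cup[n-k+1,n]$.

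The hypothesis $k>\tfrac12(n-1)$, together with the fact that $k$ is an integer, gives $2k\ge n$, hence $n-k+1\le k+1$, so the two intervals above cover $\{0,1,\dots,n\}$. Consequently $s_p=0$ for all $0\le p\le n$, and therefore
\[
\dim H^n(M)\;=\;\sum_{p=0}^n s_p\;=\;0.
\]
Since $X$ is an isolated lci singularity, the Milnor number equals $\dim H^n(M)$ by Hamm's theorem (the Milnor fiber being homotopy equivalent to a bouquet of $n$-spheres), so the Milnor number vanishes and $X$ is smooth.

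There is no real obstacle here: the argument is purely formal once the two inequalities of Proposition~\ref{nminuspvsp} and Corollary~\ref{nminuspvsp2} are in hand, and the only thing to be careful about is the integer parity bookkeeping that ensures the two intervals $[0,k]$ and $[n-k+1,n]$ actually cover $\{0,1,\dots,n\}$ under the sharp hypothesis $k>\tfrac12(n-1)$. The only external input is the standard Milnor--Hamm fact that a lci isolated singularity with vanishing Milnor number is smooth.
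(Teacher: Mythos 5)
Your argument is correct and is essentially the paper's own proof: both use Corollary~\ref{nminuspvsp2} to propagate the vanishing of $s_0,\dots,s_k$ to $s_{n-k+1},\dots,s_n$, observe that $k>\frac12(n-1)$ makes the two ranges cover $\{0,\dots,n\}$, and conclude $\mu=\dim H^n(M)=0$. Your write-up just makes the interval bookkeeping and the appeal to Hamm's bouquet theorem more explicit than the paper does.
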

\begin{proof} The given inequality on $k$ implies that $n - (k-1) = n-k + 1 \le k+1$. By Corollary~\ref{nminuspvsp2},   $s_{n-p+1} = 0$ for all $p\le k$. Thus $s_p =0$ for all $p$, $0\leq p\leq n$. Hence the Milnor number $\mu =\dim H^n(M) =0$, so $X$ is smooth.
\end{proof}

As another application of the exact sequence in Corollary~\ref{exseqMilnorcE}, we have the following formula, which is an identity involving   the Du Bois invariants, the $s_p$ and the link invariants:
 
\begin{theorem}\label{relateDBs} If $X$ is an isolated lci of dimension $n$, then for all $k\leq n-2$, 
$$b^{k, n-k} - b^{k, n-k-1} = \sum_{a=0}^{k-1}(-1)^{k-a-1}(\ell^{a, n-a} - \ell^{a, n-a -1}) +\sum_{b=0}^k(-1)^{k-b-1}(s_b - s_{n-(b-1)}).$$
\end{theorem}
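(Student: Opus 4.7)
The plan is to reduce the identity to two structurally simpler statements by setting $A_p := b^{p, n-p}$ and $B_p := b^{p, n-p-1}$ and claiming (i) $A_p = B_{p-1}$ for $1 \le p \le n-1$, together with $A_0 = 0$, and (ii) $A_p + B_p = s_p$ for $0 \le p \le n-2$. Granting these, the recursion $B_p = s_p - B_{p-1}$ with $B_{-1} = 0$ yields the closed form $B_p = \sum_{a=0}^p (-1)^{p-a} s_a$, so $\gamma_k := A_k - B_k = B_{k-1} - B_k = -s_k + 2\sum_{a=0}^{k-1}(-1)^{k-a-1} s_a$. On the other hand, substituting the identity $\ell^{a, n-a} - \ell^{a, n-a-1} = s_a - s_{n-a}$ from Proposition~\ref{boundlink}(ii) into the claimed right-hand side and collecting terms shows that it simplifies to exactly the same expression $-s_k + 2\sum_{a=0}^{k-1}(-1)^{k-a-1} s_a$ (the large-index sums cancel after an index shift, using $s_{n+1}=0$); thus the theorem follows.

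Assertion (i) and the vanishing $A_0 = 0$ both follow from the spectral sequence of Lemma~\ref{convzero}(i), $E_1^{p,q} = H^q(\hX; \Omega^p_{\hX}(\log E)(-E)) \Rightarrow 0$. In the isolated lci case, Theorem~\ref{dBzero} forces the nonzero $E_1^{p,q}$ for $q > 0$ to sit on the two antidiagonals $p + q = n-1$ and $p + q = n$, so on each row $q_0 = n - p$ with $1 \le p \le n-1$ the only nonzero entries are at positions $p-1$ and $p$. Since the spectral sequence converges to zero, every $E_\infty^{p,q}$ vanishes, so the row Euler characteristic vanishes on both the $E_1$ and $E_\infty$ pages: $(-1)^{p-1} b^{p-1, n-p} + (-1)^p b^{p, n-p} = 0$, giving $A_p = B_{p-1}$. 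Applying the same argument to the row $q_0 = n$ (with the convention $B_{-1} = 0$) gives $A_0 = 0$.

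The substantive content is assertion (ii), and my strategy is to construct a short exact sequence of mixed Hodge structures whose graded pieces under the Hodge filtration have dimensions $A_p$, $s_p$, and $B_p$. The natural source is Steenbrink's Mayer-Vietoris-style decomposition of the semistable special fiber $\hcX_0 = \hX + \cE$ along the intersection $E = \hX \cap \cE$: the complex $K^\bullet = \Omega^\bullet_{\hcX/\Delta}(\log \hcX_0)|\cE$ of Lemma~\ref{Wang}, which computes $H^n(M)$ with its Hodge filtration, should fit in a distinguished triangle together with the analogous complex on $\hX$ (which one verifies equals $\Omega^\bullet_{\hX}(\log E)$, so its hypercohomology recovers $H^*(L)$) and a complex on $E$. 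Extracting the $p$-th graded piece of the naive filtration, invoking the restricted support in the lci case (Theorem~\ref{dBzero}), the semipurity of $L$ (Theorem~\ref{semipure}), and assertion (i) itself to force various potential terms to vanish, the resulting long exact sequence should collapse to a short exact sequence of graded pieces whose outer terms are exactly $b^{p,n-p}$ and $b^{p,n-p-1}$ and whose middle term has dimension $s_p$, proving (ii). The main obstacles will be setting up the Mayer-Vietoris triangle with full compatibility for the Hodge filtrations, identifying the residue/boundary complex on $E$, and reconciling the sign and Tate-twist conventions of Lemma~\ref{Wang} with the additive identity $A_p + B_p = s_p$.
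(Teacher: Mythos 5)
There is a genuine gap; in fact both pillars of your reduction are unsupported, and the fatal one is assertion (i). In the spectral sequence $E_1^{p,q}=H^q(\hX;\Omega^p_{\hX}(\log E)(-E))\Rightarrow 0$ of Lemma~\ref{convzero}(i), the differential $d_r$ maps $E_r^{p,q}$ to $E_r^{p+r,q-r+1}$, so for $r\ge 2$ it changes rows; convergence to zero therefore does \emph{not} give a vanishing Euler characteristic row by row. Worse, in the lci case these cross-row differentials connect exactly the entries you are trying to separate: $d_r$ carries the entry on the antidiagonal $p+q=n-1$ in row $q_0$ to the entry on the antidiagonal $p+q=n$ in row $q_0-r+1$, and for $q_0-r+1=0$ it lands in the bottom row $E_1^{p,0}=H^0(\hX;\Omega^p_{\hX}(\log E)(-E))$, which is infinite dimensional and which your argument ignores. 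What the spectral sequence actually yields without further hypotheses is, e.g., only that $d_1\colon E_1^{0,n-1}\to E_1^{1,n-1}$ is surjective, i.e.\ $b^{1,n-1}\le b^{0,n-1}$, not equality; the paper's Extra Vanishing Lemma~\ref{extravan} is precisely the statement one can extract, and it requires a $k$-Du Bois hypothesis to kill the cross-row differentials. Assertion (ii) is not proved at all: the distinguished triangle, the identification of its three terms, and the collapse of the long exact sequence are only asserted (``should fit,'' ``should collapse''), and the complex you propose on $\hX$ computes $H^*(L)$, whose Hodge graded pieces are the link invariants $\ell^{p,q}$, not the Du Bois invariants $b^{p,n-p}$ and $b^{p,n-p-1}$ that you need as the outer terms.

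Beyond the proofs, the two assertions cannot both hold for a non-Du Bois isolated lci singularity, so at least one is false rather than merely unproven. Indeed (ii) at $p=0$ gives $b^{0,n-1}=s_0$, then (i) gives $b^{1,n-1}=s_0$ and (ii) at $p=1$ gives $b^{1,n-2}=s_1-s_0$; but the recursion the paper actually derives (the last display of its proof, obtained by applying $\chi_{\ge n-k-1}$ to the sequences (\ref{5.3.1})--(\ref{5.3.3}) on the semistable model) telescopes from $b^{0,n}=0$ to $b^{k,n-k-1}-b^{k,n-k}=s_k$, which with $b^{1,n-1}=s_0$ forces $b^{1,n-2}=s_0+s_1$ and hence $s_0=0$. (You may also want to flag a sign issue: your reduction correctly shows the stated alternating-sign identity is equivalent to $b^{k,n-k}-b^{k,n-k-1}=-s_k+2\sum_{a=0}^{k-1}(-1)^{k-a-1}s_a$, whereas the paper's own displayed recursion telescopes to $b^{k,n-k}-b^{k,n-k-1}=-s_k$; these disagree already for $k=1$ unless $s_0=0$, so the statement and the proof in the paper cannot both be correct as written.) The paper's proof is an induction on $k$ using the truncated Euler characteristic $\chi_{\ge n-k-1}$ of three short exact sequences relating $\hX$, $\cE$ and $\hcX$; your plan is in the same spirit, but the two structural identities you reduce to are not available, and the row-counting argument offered for the first one is invalid.
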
 

\begin{proof} First, we make a preliminary definition:

\begin{definition}\label{defn4.6}  Let $Z$ be a space, and let $\mathcal{F}$ be a sheaf of $\Cee$-vector spaces on $Z$ such that, for $i\ge a$, $H^i(Z; \mathcal{F})$ is finite dimensional, and is nonzero for only finitely many $i\ge a$. Define 
$$\chi_{\ge a}(Z; \mathcal{F}) = \chi_{\ge a}(\mathcal{F}) = \sum_{i\ge a}(-1)^i\dim H^i(Z; \mathcal{F}).$$
If $0 \to \mathcal{F}' \to \mathcal{F} \to \mathcal{F}'' \to 0$ is an exact sequence such that $H^a(Z; \mathcal{F}') \to H^a(Z; \mathcal{F})$ is injective, in particular if $H^a(Z; \mathcal{F}') =0$ or if $H^{a-1}(Z; \mathcal{F}'') =0$, then
$$\chi_{\ge a}(\mathcal{F}) = \chi_{\ge a}(\mathcal{F}') + \chi_{\ge a}(\mathcal{F}'').$$
\end{definition}

In the above situation, let $\nu \colon Z\to \overline{Z}$ be a proper morphism from a complex space $Z$ to a Stein space $\overline{Z}$ such that  $\nu$ is an isomorphism over  $\overline{Z} - \{z\}$ for some point $z\in \overline{Z}$, and let $\mathcal{F}$  be a coherent sheaf on $Z$. For $q>0$,   $H^q(Z; \mathcal{F}) = H^0(\overline{Z}; R^q\nu_*\mathcal{F})$. Thus we see that, in the definition of $\chi_{\ge a}$, as long as $a>0$, we could replace $\dim H^i(Z; \mathcal{F})$ by $\ell(R^i\nu_*\mathcal{F})$, where $\ell$ denotes the length of the skyscraper sheaf $R^i\nu_*\mathcal{F}$, and $\ell(R^i\nu_*\mathcal{F})$ only depends on the germ  $(\overline{Z},z)$. However, for the sake of clarity, we will continue to use $\dim H^i(Z; \mathcal{F})$ in the definition of $\chi_{\ge a}$. 

We want to apply  Definition~\ref{defn4.6} for the case $a = n-k-1$ to the following three exact sequences:  
 \begin{equation}\label{5.3.1}
 0 \to \Omega^k_{\hcX}(\log \cE)(-\cE) \to \Omega^k_{\hcX}(\log \cE) \to \Omega^k_{\hcX}(\log \cE)|\cE \to 0,
 \end{equation}
 the restriction sequence:
 \begin{equation}\label{5.3.2}
 0 \to \Omega^k_{\hcX}(\log(\hX + \cE))(-\hX - \cE) \to  \Omega^k_{\hcX}(\log \cE)(-\cE) \to \Omega^k_{\hX}(\log E)(-E)  \to 0,
 \end{equation}
 which is the log version of the sequence
 $$0 \to \Omega^k_{\hcX}(\log \hX  )(-\hX) \to  \Omega^k_{\hcX}  \to \Omega^k_{\hX} \to 0,$$
 twisted by $\scrO_{\hcX}(-\cE)$, and the Poincar\'e residue sequence
  \begin{equation}\label{5.3.3}
 0 \to \Omega^k_{\hcX}(\log  \cE)  \to  \Omega^k_{\hcX}(\log (\hX + \cE)) \to \Omega^{k-1}_{\hX}(\log E)  \to 0.
 \end{equation}

  In what follows, we assume that $k \le n-2$, so that $n-k-1 \ge 1$. Then $\chi_{\ge n-k -1}$ is defined for all of the sheaves in question. We claim that  $\chi_{\ge n-k -1}$ is additive for the exact sequences  ~(\ref{5.3.1})--(\ref{5.3.3}). To simplify the notation, we write $\overline{\chi}$ for $\chi_{\ge n-k -1}$. By Theorem~\ref{dBzero} applied to $X$ and to $\cX$, 
 $$H^{n-k-1}(\hcX; \Omega^k_{\hcX}(\log \cE)(-\cE)) =
 H^{n-k-1}(\hcX; \Omega^k_{\hcX}(\log \cE))  =  H^{n-k-1}(\hX;\Omega^{k-1}_{\hX}(\log E)) = 0.$$
 Thus $\overline{\chi}$ is additive for the exact sequences (\ref{5.3.1}) and (\ref{5.3.3}). 
 Moreover, by the exact sequence  ~(\ref{5.3.3}), the fact that $\scrO_{\hcX}(-\hX - \cE) \cong \scrO_{\hcX}$, and the vanishing of $H^{n-k-1}(X;\Omega^{k-1}_{\hX}(\log E))$, 
 $$H^{n-k-1}(\Omega^k_{\hcX}(\log(\hX + \cE)(-\hX - \cE)) = H^{n-k-1}(\Omega^k_{\hcX}(\log(\hX + \cE)) =0,$$ 
 so that $\overline{\chi}$ is additive for the exact sequence  (\ref{5.3.2}) as well. 
 
Thus, we see that
\begin{align*}
\overline{\chi}(\Omega^k_{\hcX}(\log \cE)) &= \overline{\chi}(\Omega^k_{\hcX}(\log \cE)(-\cE) ) + \overline{\chi}(\Omega^k_{\hcX}(\log \cE)|\cE);\\
\overline{\chi}(\Omega^k_{\hcX}(\log \cE)(-\cE)) &=  \overline{\chi}(\Omega^k_{\hcX}(\log(\hX + \cE))) + \overline{\chi}(\Omega^k_{\hX}(\log E)(-E)) ;\\
\overline{\chi}(\Omega^k_{\hcX}(\log(\hX + \cE))) &=  \overline{\chi} (\Omega^k_{\hcX}(\log  \cE)) + \overline{\chi}(\Omega^{k-1}_{\hX}(\log E)).
\end{align*}
Rewrite this as 
\begin{align*}
 \overline{\chi}(\Omega^k_{\hX}(\log E)(-E)) &= \overline{\chi}(\Omega^k_{\hcX}(\log \cE)(-\cE) - \overline{\chi}(\Omega^k_{\hcX}(\log(\hX + \cE)))\\
 &= \overline{\chi}(\Omega^k_{\hcX}(\log \cE)) - \overline{\chi}(\Omega^k_{\hcX}(\log \cE)|\cE) - \overline{\chi} (\Omega^k_{\hcX}(\log  \cE)) - \overline{\chi}(\Omega^{k-1}_{\hX}(\log E))\\
 &= - \overline{\chi}(\Omega^k_{\hcX}(\log \cE)|\cE) - \overline{\chi}(\Omega^{k-1}_{\hX}(\log E)).
 \end{align*}
 Equivalently,
 $$\overline{\chi}(\Omega^k_{\hX}(\log E)(-E)) + \overline{\chi}(\Omega^{k-1}_{\hX}(\log E)) =- \overline{\chi}(\Omega^k_{\hcX}(\log \cE)|\cE) =(-1)^{n-k-1}(s_k - s_{n-(k-1)}).$$
 We have the  exact sequence
  $$0 \to \Omega_{\hX}^{k-1}(\log E)(-E) \to \Omega_{\hX}^{k-1}(\log E) \to \Omega_{\hX}^{k-1}(\log E)|E \to 0,$$
 and $H^{n-k-1}(\hX; \Omega^{k-1}_{\hX}(\log E)(-E)) = 0$ by Theorem~\ref{dBzero}.  
 Thus
  $$\overline{\chi}(\Omega^{k-1}_{\hX}(\log E)) = \overline{\chi}(\Omega^{k-1}_{\hX}(\log E)(-E)) + \overline{\chi}(\Omega^{k-1}_{\hX}(\log E)|E).$$
 Putting this together, we get
 $$b^{k, n-k} - b^{k, n-k-1} + b^{k-1, n-k}  - b^{k-1, n-k+1}  = \ell^{k-1, n-k+1} - \ell^{k-1, n-k}- (s_k - s_{n-(k-1)}).$$
The proof of the theorem then follows by induction on $k$.
\end{proof} 

\begin{remark} (i) For $k=0$, Theorem~\ref{relateDBs} is the statement that $b^{0,n-1} = s_0$ \cite[p.\ 1372]{SteenbrinkDB}.  By \cite[Proposition 2.13]{Steenbrink} $s_n =  h^{n-1}(\scrO_{\hX})= b^{0,n-1} + \ell^{0,n-1}$. More generally, the last equation in the proof of Theorem~\ref{relateDBs} relates $s_p - s_{n-(p-1)}$ to the link invariants and Du Bois invariants of $X$. Since the difference $s_p - s_{n-p}$ is also expressed in terms of the link invariants, by Proposition~\ref{boundlink}(ii), there is a formula for $s_{p-1} - s_p$ in terms of  the link invariants and Du Bois invariants of $X$, and hence  there is such a formula for $s_p$ since $s_0 = b^{0,n-1}$. In particular, $s_p$ can be calculated solely in terms of the standard numerical invariants of $\hX$. It would be interesting to find a more direct explanation for this fact.

\smallskip
\noindent (ii) We will apply Theorem~\ref{relateDBs} in the case that $s_p=0$ for all $p\le k$. In this case, by Corollary~\ref{boundk}, we may as well assume that $k\le (n-1)/2$. Note that $(n-1)/2 \leq n-2$ $\iff$ $n\geq 3$. However,  for $n=2$, if $k\le (n-1)/2 = 1/2$, then $k = n-2 =0$ and so Theorem~\ref{relateDBs} applies in this case as well. 
\end{remark}

\section{Proofs of the main theorems}\label{section5}
  
\subsection{Characterization of $k$-Du Bois and $k$-rational singularities} 
 In this subsection, we prove Theorem~\ref{thm11} and  Theorem~\ref{Thm-improvesing}. The main point will be to relate the $k$-Du Bois property to the vanishing of certain of the $s_p$ (Theorem~\ref{thm11}(i)):
\begin{theorem}\label{mainDB0} Let $(X,x)$ be an isolated lci singularity  with Milnor fiber $M$. Then  $X$ is $k$-Du Bois $\iff$ $s_p =0$ for all $p\leq k$.  \end{theorem}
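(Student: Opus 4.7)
The plan is induction on $k$. The base case $k=0$ is Steenbrink's classical result: the $k=0$ instance of Theorem~\ref{relateDBs} (recorded in the remark following its proof) gives $b^{0,n-1}=s_0$, while $b^{0,n}=\dim H^0_E(\hX;\Omega^n_{\hX}(\log E))=0$ by the torsion-freeness of $\Omega^n_{\hX}(\log E)$. Combined with Theorems~\ref{thm2} and~\ref{dBzero}, these force the equivalence $X$ Du Bois $\iff$ $s_0=0$. At the outset I would also reduce to $k\le n-2$: if $k>(n-1)/2$, then by Corollary~\ref{boundk}, $s_p=0$ for $p\le k$ already forces $X$ smooth, and the claim is trivial; the remaining range $k\le(n-1)/2$ satisfies $k\le n-2$ for $n\ge 3$, while for $n=2$ only $k=0$ is relevant.

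For the forward direction of the inductive step, I would assume $X$ is $k$-Du Bois, so that by the inductive hypothesis $s_p=0$ for $p\le k-1$, and then apply Corollary~\ref{nminuspvsp2} with index $k-1$ to obtain the symmetric vanishing $s_q=0$ for $q\ge n-k+2$. Rewriting $\ell^{a,n-a}-\ell^{a,n-a-1}=s_a-s_{n-a}$ via Proposition~\ref{boundlink}(ii), these two layers of vanishing kill every summand on the right-hand side of Theorem~\ref{relateDBs} except the $a=k-1$ contribution in the first sum and the $b=k$ contribution in the second sum, which involve $s_{n-k+1}$ (not yet known to vanish) and $s_k$. A short bookkeeping of signs shows that the two $s_{n-k+1}$ contributions cancel and the identity collapses to
\[
b^{k,n-k}-b^{k,n-k-1}\;=\;-s_{n-k+1}+(-s_k+s_{n-k+1})\;=\;-s_k.
\]
Since $X$ is $k$-Du Bois, the left-hand side vanishes by Theorem~\ref{thm2}, yielding $s_k=0$.

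For the reverse direction, I would assume $s_p=0$ for $p\le k$, conclude via the inductive hypothesis that $X$ is $(k-1)$-Du Bois, and extract from Corollary~\ref{nminuspvsp2} the stronger vanishing $s_q=0$ for all $q\ge n-k+1$. The same expansion of Theorem~\ref{relateDBs} now collapses completely (the two terms above vanish individually this time), so
\[
b^{k,n-k}=b^{k,n-k-1}.
\]
To separate these into the desired individual vanishings I would apply the Extra Vanishing Lemma~\ref{extravan} to the $(k-1)$-Du Bois singularity $X$ (that is, the lemma with $k$ replaced by $k-1$), which gives $b^{k,n-k}=0$ and hence $b^{k,n-k-1}=0$ as well. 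Since Theorem~\ref{dBzero} already forces $b^{k,q}=0$ for every other $q>0$, Theorem~\ref{thm2} yields that $X$ is $k$-Du Bois.

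The main obstacle is the delicate calibration of Theorem~\ref{relateDBs} against Corollary~\ref{nminuspvsp2}: the amount of symmetric vanishing produced by the inductive hypothesis is precisely what is needed to collapse the right-hand side to $-s_k$ in the forward direction and to $0$ in the reverse direction, with the cancellation of the $a=k-1$ and $b=k$ terms hinging on a sign match that has to be checked by hand. A secondary subtlety is that Theorem~\ref{relateDBs} alone only records the difference $b^{k,n-k}-b^{k,n-k-1}$; the reverse direction cannot be closed without the independent input of Lemma~\ref{extravan}, whose hypothesis (just $(k-1)$-Du Bois) is fortunately weaker than the $k$-Du Bois conclusion we are trying to establish.
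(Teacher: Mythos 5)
Your reverse direction coincides with the paper's: both run the induction through Lemma~\ref{extravan} (giving $b^{k,n-k}=0$), Corollary~\ref{nminuspvsp2} (giving the symmetric vanishing $s_{n-p}=0$ for $p\le k-1$), and Theorem~\ref{relateDBs} (giving $b^{k,n-k-1}=0$), concluding via Theorems~\ref{dBzero} and~\ref{thm2}. Your forward direction, however, is genuinely different. The paper globalizes: by Scherk--Steenbrink one embeds the smoothing in a family $\cY\to\Delta$ with $H^n(Y_t)\to H^n(M)$ surjective, and \cite[Cor.~1.4]{FL-DuBois} applied to the $k$-Du Bois fiber $Y_0$ then forces $\Gr_F^pH^n(M)=0$ for $p\le k$. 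You instead stay entirely local and extract $s_k=0$ from the same numerical identity of Theorem~\ref{relateDBs}, using Proposition~\ref{boundlink}(ii) to convert the link terms into $s_a-s_{n-a}$; your sign bookkeeping is correct (the surviving $a=k-1$ and $b=k$ terms do combine to $-s_k$), and where it applies this is arguably cleaner, since it avoids the globalization theorem and the external input from \cite{FL-DuBois}.

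There is, however, a gap in the forward direction. Theorem~\ref{relateDBs} is only available for $k\le n-2$, and your reduction to that range does not work in this direction: you invoke Corollary~\ref{boundk} under the hypothesis ``$s_p=0$ for $p\le k$,'' but in the forward direction that is the \emph{conclusion}. The induction only supplies $s_p=0$ for $p\le k-1$, which triggers Corollary~\ref{boundk} only when $k-1>(n-1)/2$. The cases that fall through are exactly those with $k\ge n-1$ and $k-1\le(n-1)/2$, namely $(n,k)=(2,1)$ and $(n,k)=(3,2)$. There the claim is not vacuous: it asserts that a $1$-Du Bois isolated lci surface singularity (resp.\ a $2$-Du Bois isolated lci threefold singularity) has $s_1=0$ (resp.\ $s_2=0$) and hence, by Corollary~\ref{boundk}, is smooth. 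Your machinery cannot reach this because the identity you need simply does not exist at that level of $k$, whereas the paper's global argument is uniform in $n$ and $k$. To complete your proof you would need a separate argument for these two edge cases, or to import the paper's forward direction there.
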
 
\begin{proof} First suppose that $X$ is $k$-Du Bois. By a result of Scherk \cite[Corollary 3.11]{Scherk} for hypersurfaces and Steenbrink \cite[Theorem 1]{Steenglobal} in general, we can embed a smoothing $X_t$ of $X$ in a family of smoothings $\cY \to \Delta$ such that $Y_0$ has just one isolated singularity analytically isomorphic to $X$ and the restriction map $i^*\colon H^n(Y_t) \to H^n(M)$ is surjective. This gives an exact sequence of mixed Hodge structures
$$0 \to H^n(Y_0) \to H^n(Y_t) \to H^n(M) \to 0.$$
Since $X$, and hence $Y_0$, are $k$-Du Bois, by \cite[Corollary 1.4]{FL-DuBois} and counting dimensions, for all $p\leq k$,  the map $\Gr_F^pH^n(Y_0) \to \Gr_F^pH^n(Y_t)$ is an isomorphism and hence $\Gr_F^pH^n(M) = 0$. Thus $s_p =0$ for all $p\leq k$.

Conversely, suppose that $s_p =0$ for all $p\leq k$. By induction on $k$, we can assume that $X$ is $(k-1)$-Du Bois. In particular, by Lemma~\ref{extravan}, $b^{k,n-k} =0$.  By Corollary~\ref{nminuspvsp2}, $s_{n-p} = 0$ for all $p\le k-1$. Moreover, $\ell^{p, n-p} \leq s_p = 0$ for all $p\leq k$ and $\ell^{p, n-p-1} \leq s_{n-p} = 0$ for all $p\leq k-1$.   Then, by Theorem~\ref{relateDBs}, $b^{k,n-k-1} =0$ as well, and hence $X$ is $k$-Du Bois (see also Theorem~\ref{maindB} (iv) below).
\end{proof} 

Given Theorem~\ref{mainDB0}, it is now straightforward to give various characterizations of $k$-Du Bois and $k$-rational singularities and to prove extra vanishing statements. We begin with the $k$-Du Bois case:

\begin{theorem}\label{maindB} Let $(X,x)$ be an isolated lci of dimension $n$ and suppose $k\leq \frac12 (n-1)$. Then the following are equivalent:
\begin{enumerate}
\item[\rm(i)] $X$ is $k$-Du Bois.
\item[\rm(ii)] $\Omega_X^p\xrightarrow{\sim}\mathcal{H}^0(\uOp_X)$ is a quasi-isomorphism and $\mathcal{H}^q(\uOp_X)=0$ for $p\leq k$ and $q> 0$. 
\item[\rm(iii)] $b^{p,q}=\dim H^q(\hX;\Omega^p_{\hX}(\log E)(-E)) =0$ for $0\le p\le k$ and $q>0$.
\item[\rm(iv)] $X$ is $(k-1)$-Du Bois and $b^{k, n-k -1} =0$.
\item[\rm(v)] $b^{p,q}= \dim H^q(\hX;\Omega^p_{\hX}(\log E)(-E))=0$ in the following cases: $0\le p\le k$ and $q>0$, or $q\ge n-k-1$,  or   $p+q\neq n-1,n$.
\item[\rm(vi)] $s_p=0$ for $p\le k$.
\end{enumerate}
\end{theorem}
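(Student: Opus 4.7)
The plan is to establish the six characterizations via a short chain of implications, with most of the work already done by earlier results in this section. The equivalence (i) $\iff$ (ii) is a direct reformulation using the standard structure of $\uOp_X$, and (i) $\iff$ (iii) is exactly the isolated lci case of Theorem~\ref{thm2}. The equivalence (i) $\iff$ (vi) is Theorem~\ref{mainDB0}. Together, these give (i) $\iff$ (ii) $\iff$ (iii) $\iff$ (vi) immediately, so the remaining task is to bring (iv) and (v) into the ring of equivalences.

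To handle (iii) $\iff$ (iv), the direction (iii) $\implies$ (iv) is trivial. For the converse, assume $X$ is $(k-1)$-Du Bois and $b^{k,n-k-1}=0$. Theorem~\ref{dBzero} shows that $b^{k,q}=0$ for $q>0$ whenever $k+q \notin \{n-1,n\}$, so only the two values $q=n-k$ and $q=n-k-1$ require attention. The extra vanishing Lemma~\ref{extravan}, applied to the $(k-1)$-Du Bois property, supplies $b^{k,n-k}=0$, and the remaining case $q=n-k-1$ is the standing hypothesis. This gives $b^{k,q}=0$ for all $q>0$, so (iii) holds.

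For (iii) $\iff$ (v), the direction (v) $\implies$ (iii) is immediate. For (iii) $\implies$ (v), note that (iii) already implies (i), so Lemma~\ref{extravan} yields $b^{k+1,n-k-1}=0$. Combined with Theorem~\ref{dBzero}, this covers all the extra ranges in (v): the restriction $p+q \ne n-1,n$ is Theorem~\ref{dBzero} directly; in the range $q\ge n-k-1$ one has either $p\le k$ (already covered by (iii)), or $p\ge k+1$, in which case $p+q\ge n$, so by Theorem~\ref{dBzero} one may assume $p+q=n$, forcing $(p,q)=(k+1,n-k-1)$, which is precisely the extra vanishing from Lemma~\ref{extravan}.

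The main obstacle has in fact already been overcome: it is the nontrivial direction of Theorem~\ref{mainDB0}, which passes from the vanishing $s_p=0$ for $p\le k$ to the $k$-Du Bois property. That argument proceeds by induction, using Corollary~\ref{nminuspvsp2} to obtain the symmetric vanishing $s_{n-p}=0$ for $p\le k-1$, then Proposition~\ref{boundlink}(iii) to transfer this to the link invariants, and finally the identity of Theorem~\ref{relateDBs} to conclude $b^{k,n-k-1}=0$ and feed the induction via the equivalence (iv) $\iff$ (iii) established above. The hypothesis $k\le \tfrac12(n-1)$ is used precisely to keep the two ranges $\{0,\dots,k\}$ and $\{n-k+1,\dots,n\}$ of vanishing Hodge numbers of $M$ disjoint, so that Corollary~\ref{boundk} does not force $X$ to be smooth and the induction is consistent.
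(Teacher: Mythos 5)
Your proposal is correct and follows essentially the same route as the paper: (i)$\iff$(ii)$\iff$(iii) via Theorem~\ref{thm2}, (i)$\iff$(vi) via Theorem~\ref{mainDB0}, and (iv), (v) absorbed using Theorem~\ref{dBzero} together with Lemma~\ref{extravan} in exactly the cases you identify. Your extra remarks on how Theorem~\ref{mainDB0} was proved and on the role of the hypothesis $k\le\frac12(n-1)$ are accurate but not needed for this deduction.
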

\begin{proof} The equivalence of (i), (ii), (iii) is Theorem~\ref{thm2} of the introduction. The equivalence (iii) $\iff$ (iv) follows immediately from Lemma~\ref{extravan}. The equivalence (i) $\iff$ (vi) is Theorem~\ref{mainDB0}. Thus, the only remaining statement to prove is the equivalence of (v) with the others. Clearly, (v) $\implies$ (iii). Conversely, assuming (iii), the condition that $b^{p,q}= 0$ for $p+q\neq n-1,n$ is automatic from Theorem~\ref{dBzero}, so we only have to show that $b^{p,q}= 0$ for $q\ge n-k-1$. Also, the only relevant case (where $p > k$) is $p=k+1, q = n-k-1$. Then $b^{k+1,n-k-1}= 0$ by Lemma~\ref{extravan}.
\end{proof}

The following is the analogue of Theorem~\ref{maindB} in the $k$-rational case, and includes Theorem~\ref{thm11}(ii).

\begin{theorem}\label{mainrat} 
Let $(X,x)$ be an isolated lci of dimension $n$, and suppose $k\leq \frac12 (n-1)$. Then the following are equivalent: 
\begin{enumerate}
\item[\rm(i)] $X$ is $k$-rational.
\item[\rm(ii)] $\Omega_X^p\xrightarrow{\sim}R^0\pi_*\Omega^p_{\hX}(\log E)$ is an isomorphism and $H^q(\hX;\pi_*\Omega^p_{\hX}(\log E))=0$ for $p\leq k$ and $q> 0$. 
\item[\rm(iii)] $H^q(\hX;\Omega^p_{\hX}(\log E)) =0$ for $0\le p\le k$ and $q>0$.
\item[\rm(iv)] $X$ is $k$-Du Bois and $\ell^{k, n-k -1} =0$.
\item[\rm(v)] $H^q(\hX;\Omega^p_{\hX}(\log E))=0$ in the following cases: $0\le p\le k$ and $q>0$, or $q\ge n-k-1$ and $(p,q) \neq (n, n-1)$,  or   $p+q\neq n-1,n$.
\item[\rm(vi)] $s_{n-p}=0$ for $p\le k$.
\end{enumerate}
\end{theorem}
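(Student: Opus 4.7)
The plan is to exploit the long exact sequence of Lemma~\ref{convzero}(iii),
\[
\cdots \to H^q(\hX;\Omega^p_{\hX}(\log E)(-E)) \to H^q(\hX;\Omega^p_{\hX}(\log E)) \to \Gr^p_F H^{p+q}(L) \to H^{q+1}(\hX;\Omega^p_{\hX}(\log E)(-E)) \to \cdots,
\]
which interlaces the Du Bois invariants $b^{p,q}$, the log cohomologies $H^q(\Omega^p_{\hX}(\log E))$, and the link invariants $\ell^{p,q}$, and combine it with the already-proven Theorem~\ref{maindB} and Theorem~\ref{mainDB0}. The equivalence (i) $\iff$ (ii) $\iff$ (iii) is exactly Theorem~\ref{thm4}, so the real content is (iii) $\iff$ (iv), (i) $\iff$ (vi), and (iii) $\iff$ (v).

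First I would handle (iii) $\iff$ (iv). For (iii) $\Rightarrow$ (iv), Theorem~\ref{thm5} gives $k$-Du Bois, and the LES at $(p,q)=(k, n-k-1)$ squeezes $\ell^{k,n-k-1}$ between $H^{n-k-1}(\Omega^k_{\hX}(\log E))=0$ (from (iii), using $n-k-1\ge 1$ since $k\le (n-1)/2$) and $b^{k,n-k}=0$ (from $k$-Du Bois). Conversely, given (iv), Theorem~\ref{dBzero} restricts the potential non-vanishing $\ell^{p,q}$ with $p\le k$, $q>0$ to the lines $p+q=n-1$ and $p+q=n$; on $p+q=n$ the bound $\ell^{p,n-p}\le s_p = 0$ of Proposition~\ref{boundlink}(iii) together with Theorem~\ref{mainDB0} suffices, while on $p+q=n-1$ Corollary~\ref{nminuspvsp2} gives $s_{n-p}=0$ for $p\le k-1$ (hence $\ell^{p,n-p-1}\le s_{n-p}=0$ in that range) and the missing term $\ell^{k,n-k-1}$ vanishes by hypothesis. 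The LES then collapses to $H^q(\Omega^p_{\hX}(\log E))\cong \ell^{p,q}$ for $p\le k$, $q>0$, which is what (iii) requires.

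Next, for (i) $\iff$ (vi), I route through (iv). The direction (vi) $\Rightarrow$ (iv) is immediate: Proposition~\ref{nminuspvsp} gives $\sum_{p=0}^k s_p\le \sum_{p=0}^k s_{n-p}=0$ so that $X$ is $k$-Du Bois by Theorem~\ref{mainDB0}, and Proposition~\ref{boundlink}(iii) gives $\ell^{k,n-k-1}\le s_{n-k}=0$. For (iv) $\Rightarrow$ (vi), Theorem~\ref{mainDB0} yields $s_p=0$ for $p\le k$, and Corollary~\ref{nminuspvsp2} extends this to $s_{n-p}=0$ for $p\le k-1$; the remaining value $s_{n-k}$ is pinned down by the relation $s_{n-k}-s_k = \ell^{k,n-k-1}-\ell^{n-k,k-1}$ of Proposition~\ref{boundlink}(ii), since $s_k=\ell^{k,n-k-1}=0$ and both $s_{n-k}$ and $\ell^{n-k,k-1}$ are nonnegative, forcing $s_{n-k}=0$.

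Finally, for (iii) $\iff$ (v), Theorem~\ref{dBzero} does almost all of the work: the only case covered by (v) but not already subsumed by (iii) or by Theorem~\ref{dBzero} is $(p,q)=(k+1,n-k-1)$. Here I would feed the LES: $b^{k+1,n-k-1}=0$ by Lemma~\ref{extravan} ($X$ is $k$-Du Bois from the already proven equivalence) and $b^{k+1,n-k}=0$ by Theorem~\ref{GNAPS}, so $H^{n-k-1}(\Omega^{k+1}_{\hX}(\log E))\cong \ell^{k+1,n-k-1}$; then the equality clause of Proposition~\ref{nminuspvsp}, applied with $\sum s_p=\sum s_{n-p}=0$, kills $\ell^{k+1,n-k-1}$. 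The principal obstacle throughout is bookkeeping: every implication requires careful alignment of the Serre duality symmetry $\ell^{p,q}=\ell^{n-p,n-q-1}$ with the directional $s_p$ inequalities of Proposition~\ref{nminuspvsp} and Corollary~\ref{nminuspvsp2} and with the boundary terms of the LES, and it is easy to confuse the dual indices $(k,n-k-1)$, $(k+1,n-k-1)$, and $(n-k,k-1)$ that appear in the identities.
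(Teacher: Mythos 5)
Your proposal is correct and follows essentially the same route as the paper: the same reduction of (i)--(iii) to Theorem~\ref{thm4}, the same use of the long exact sequence of Lemma~\ref{convzero}(iii) together with Proposition~\ref{boundlink}, Proposition~\ref{nminuspvsp}, Corollary~\ref{nminuspvsp2}, Theorem~\ref{mainDB0} and Lemma~\ref{extravan} to link (iii), (iv) and (vi), and the same identification of $(p,q)=(k+1,n-k-1)$ as the only new case in (v). The only cosmetic differences are that you invoke Theorem~\ref{thm5} and argue directly where the paper runs an induction on $k$, and you kill $\ell^{k+1,n-k-1}$ via the equality clause of Proposition~\ref{nminuspvsp} rather than the paper's explicit sublemma; both rest on the same computation from Lemma~\ref{Steenbrinkinequal}.
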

\begin{proof} The equivalence of (i), (ii), and (iii) is Theorem~\ref{thm4}. 

\smallskip
\noindent (vi) $\implies$ (iv): By Proposition~\ref{nminuspvsp}, if $s_{n-p}=0$ for $p\le k$, then $s_p =0$ for all $p\le k$ and, by Proposition~\ref{boundlink}(iii),  $\ell^{k, n-k -1} \le s_{n-k}=0$. Thus, by Theorem~\ref{maindB}(vi), $X$ is $k$-Du Bois and $\ell^{k, n-k -1} =0$.

\smallskip
\noindent (iv) $\implies$ (vi): By Corollary~\ref{nminuspvsp2}, $s_{n-p} =0$ for all $p\le k-1$, so we just have to show that $s_{n-k}=0$. Again by Proposition~\ref{boundlink}(iii), $\ell^{k, n-k}\leq s_k =0$.  By Proposition~\ref{boundlink}(ii) and the hypotheses of (iv),  
$$s_{n-k} = s_{n-k} - s_k = \ell^{k, n-k -1} - \ell^{k, n-k} =0.$$

 \smallskip
\noindent  (iv) $\implies$ (iii): We  argue by induction on $k$. The case $k=0$ corresponds to $H^q(\hX; \scrO_{\hX}(-E)) =0$ for $q> 0$ and $H^{n-1}(E; \scrO_E) =0$. Then $H^{n-1}(\hX; \scrO_{\hX}) =0$ and hence $H^q(\hX; \scrO_{\hX}) =0$ for all $q> 0$. Inductively, if (iv) holds for a given $k$, then so does (vi) and hence they hold for for all $j< k$. By induction, then, if (iv) holds, then $X$ is $(k-1)$-rational. So we only need to check that  $H^q(\hX;\Omega^k_{\hX}(\log E))=0$ for all $q>0$. By Theorem~\ref{dBzero}, we can assume that $q = n-k$ or $q= n-k-1$. By the exact sequence of Lemma~\ref{convzero}(iii), $H^{n-k-1}(\hX;\Omega^k_{\hX}(\log E))\cong \Gr_F^kH^{n-1}(L)$ and $H^{n-k}(\hX;\Omega^k_{\hX}(\log E))\cong \Gr_F^kH^n(L)$. Then $\Gr_F^kH^{n-1}(L) =0$ by the hypothesis that $\ell^{k, n-k -1} =0$, and $\Gr_F^kH^n(L) =0$ because $\ell^{k, n-k} \le s_k =0$. Thus $H^q(\hX;\Omega^k_{\hX}(\log E))=0$ for all $q>0$.

\smallskip
\noindent  (iii) $\implies$ (iv): Again by induction on $k$ (starting from the case $k=-1$), we can assume that $X$ is $(k-1)$-Du Bois. Hence $b^{k,n-k}= 0$ by Lemma~\ref{extravan}. Then the exact sequence of Lemma~\ref{convzero}(iii) shows that $H^{n-k-1}(\hX;\Omega^k_{\hX}(\log E)(-E)) = \Gr^k_FH^{n-1}(L) = 0$. Thus $X$ is $k$-Du Bois and $\ell^{k, n-k -1} =0$.

\smallskip
\noindent (v) $\iff$ (iii): Clearly, (v) $\implies$ (iii). Conversely, assuming (iii), it follows from Theorem~\ref{dBzero}  that $H^q(\hX;\Omega^p_{\hX}(\log E))= 0$ for $p+q\neq n-1,n$, $(p,q) \neq (n, n-1)$ is automatic.  Thus, we need only   show that $H^q(\hX;\Omega^p_{\hX}(\log E))=0$ for $q\ge n-k-1$, $(p,q) \neq (n, n-1)$, and as in the proof of Theorem~\ref{maindB}, we can assume that $p=k+1$ and  $q= n-k-1$. Since (iii) $\iff$ (iv) has been proved, we know that $X$ is $k$-Du Bois and hence that $H^{n-k-1}(\hX;\Omega^{k+1}_{\hX}(\log E))(-E))= 0$ by Lemma~\ref{extravan}. By Lemma~\ref{convzero}(iii), it suffices to show that $\ell^{k+1, n-k-1} = 0$. This follows from (ii) in the next lemma:

\begin{lemma} Let $(X,x)$ be an isolated lci of dimension $n$.
\begin{enumerate} \item[\rm(i)] If $X$ is $k$-Du Bois and $(p,q) \neq (0,0)$ or $(n, n-1)$, then $\ell^{p,q}=0$ for $p\le k-1$, or $p=k$ and $q\neq n-k-1$.
 \item[\rm(ii)] If $X$ is $k$-rational and $(p,q) \neq (0,0)$ or $(n, n-1)$, then $\ell^{p,q}=0$ for $p\le k$, or $q\ge n-k-1$. Furthermore, the first possible non-zero link invariants satisfy:  
 $$\ell^{k+2,n-k-2}\le  \ell^{k+1,n-k-2}.$$
 \end{enumerate}
\end{lemma} 
\begin{proof} We shall just prove (ii). For $p\le k$, we have $\ell^{p, n-p} \leq s_p =0$, and $\ell^{p, n-p-1} \leq s_{n-p} =0$. As these are the only possible invariants for $\ell^{p,q}$ with $p+q\neq 0$ and $p\le k$, we see that $\ell^{p,q}=0$ for $p\le k$,  $p+q\neq 0$. If $q\ge n-k-1$  and $(p,q) \neq  (n, n-1)$, the only possible remaining nonzero invariants are $\ell^{k, n-k -1}$ and $\ell^{k+1,n-k-1}$. We have seen that $\ell^{k, n-k -1} =0$. By Lemma~\ref{Steenbrinkinequal},
$$\ell^{k+1,n-k-1} = \sum_{p\le k}\ell^{p+1,n-p-1}=\sum_{p\le k}\ell^{n-p-1, p} \le \sum_{p\le k}\ell^{p,n-p-1}=0.$$
Finally, 
$$\ell^{k+2,n-k-2} = \sum_{p\le k+1}\ell^{p+1,n-p-1}=\sum_{p\le k+1}\ell^{n-p-1, p} \le \sum_{p\le k+1}\ell^{p,n-p-1}= \ell^{k+1,n-k-2}.\qed$$
\renewcommand{\qedsymbol}{}
\end{proof}
Thus, we have established (iii) $\iff$ (v), completing the implications in Theorem~\ref{mainrat}.
\end{proof}

We summarize the connection between $k$-rational and $k$-Du Bois in the following statement (Theorem~\ref{Thm-improvesing} from the introduction):

\begin{corollary}\label{Thm-improvesing2}  Let $(X,x)$ be an isolated lci of dimension $n$.
 \begin{enumerate}
 \item[\rm(i)] If $X$ is $k$-rational, then $X$ is $k$-Du Bois. More precisely, $X$ is $k$-rational $\iff$  $X$ is $k$-Du Bois and  $\ell^{k,n-k-1}=0$.   
  \item[\rm(ii)] $X$ is $k$-Du Bois   $\iff$  $X$ is $(k-1)$-rational and $b^{k,n-k-1}=0$.  
 \end{enumerate}
 \end{corollary}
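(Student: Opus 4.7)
The plan is to deduce both parts of Corollary~\ref{Thm-improvesing2} directly from Theorems~\ref{maindB} and~\ref{mainrat}, which have already recast the $k$-Du Bois and $k$-rational properties in six equivalent forms apiece. Since each condition in the corollary is phrased in terms of the numerical invariants $b^{p,q}$, $\ell^{p,q}$, and $s_p$, the proof should reduce entirely to matching clauses from those theorems, with one short vanishing argument.

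For part (i), the second biconditional is literally the equivalence (i) $\iff$ (iv) of Theorem~\ref{mainrat}, so there is nothing to prove. The preceding assertion that $k$-rational implies $k$-Du Bois is then an immediate consequence (one simply drops the condition $\ell^{k,n-k-1}=0$).

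For part (ii), I would combine Theorem~\ref{maindB}(iv) (which says $X$ is $k$-Du Bois $\iff$ $X$ is $(k-1)$-Du Bois and $b^{k,n-k-1}=0$) with part (i) applied one level down. For the reverse direction, if $X$ is $(k-1)$-rational with $b^{k,n-k-1}=0$, then (i) at level $k-1$ gives $(k-1)$-Du Bois, and Theorem~\ref{maindB}(iv) then yields $k$-Du Bois. For the forward direction, assume $X$ is $k$-Du Bois; Theorem~\ref{maindB}(iv) already provides $(k-1)$-Du Bois and $b^{k,n-k-1}=0$, so by part (i) at level $k-1$ it suffices to verify $\ell^{k-1,n-k}=0$. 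This is the only step that requires any computation: Theorem~\ref{maindB}(vi) gives $s_p=0$ for $p\le k$, Corollary~\ref{nminuspvsp2} upgrades this to $s_{n-p}=0$ for $p\le k-1$ (in particular $s_{n-k+1}=0$), and then Proposition~\ref{boundlink}(iii) delivers $\ell^{k-1,n-k}\le s_{n-k+1}=0$.

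Because every ingredient is already established, I do not anticipate any genuine obstacle; the corollary is essentially a bookkeeping consequence of the two main theorems. The only non-formal step is the chain \textbf{$k$-Du Bois} $\Rightarrow$ \emph{vanishing of the relevant $s_p$'s} $\Rightarrow$ \emph{vanishing of the relevant $s_{n-p}$'s} $\Rightarrow$ \emph{vanishing of $\ell^{k-1,n-k}$}, which I would flag as the one place where the interplay between the Milnor-fiber and link invariants does real work.
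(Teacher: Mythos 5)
Your proof is correct and follows essentially the same route as the paper: both parts are read off from the equivalences in Theorems~\ref{maindB} and~\ref{mainrat}, with Corollary~\ref{nminuspvsp2} supplying the passage from vanishing of the $s_p$ to vanishing of the $s_{n-p}$. The only (harmless) difference is in the forward direction of (ii), where the paper concludes $(k-1)$-rationality directly from Theorem~\ref{mainrat}(vi) once $s_{n-p}=0$ for $p\le k-1$, whereas you enter through Theorem~\ref{mainrat}(iv) and verify $\ell^{k-1,n-k}=0$ via Proposition~\ref{boundlink}(iii) --- effectively re-deriving a step already contained in the proof of that theorem.
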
 
 \begin{proof}  (i) was proved  as Theorem~\ref{mainrat}(iv). As for (ii), if $X$ is $k$-Du Bois, then   $X$ is $(k-1)$-rational by Corollary~\ref{nminuspvsp2} and Theorem~\ref{mainrat}(vi), and $b^{k,n-k-1}=0$.  Conversely, if $X$ is $(k-1)$-rational and $b^{k,n-k-1}=0$, then $X$ is $(k-1)$-Du Bois by (i), and  Theorem~\ref{maindB}(iv) then implies that $X$ is $k$-Du Bois.
 \end{proof}

\subsection{Inversion of adjunction}
Methods similar to the proof of Theorem~\ref{relateDBs} also show the following (due to Steenbrink \cite[Theorem 3.11]{Steenbrink} in case $k=0$):

\begin{theorem}\label{semistablesmooth} If $\cX \to \Delta$ is a smoothing of the isolated lci singularity $X$ which admits a semistable model, then the following are equivalent:
\begin{enumerate}
\item[\rm(i)] $X$ is $k$-Du Bois.
\item[\rm(ii)] $\cX$ is $k$-rational.
\item[\rm(iii)] $\cX$ is $k$-Du Bois and $s_k =0$.  
\end{enumerate}
\end{theorem}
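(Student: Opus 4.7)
The plan is to prove the three equivalences in the cyclic order (iii) $\Rightarrow$ (ii) $\Rightarrow$ (i) $\Rightarrow$ (iii), by induction on $k$, with base case $k=0$ being Steenbrink's Proposition 3.11 in \cite{Steenbrink}. The implication (iii) $\Rightarrow$ (ii) is immediate: applying Theorem~\ref{Thm-improvesing2}(i) to the isolated lci $\cX$ of dimension $n+1$ reduces $k$-rationality to $\cX$ being $k$-Du Bois together with $\ell^{k, n-k}(\cX) = 0$, and Corollary~\ref{exseqMilnorcE} provides $\ell^{k, n-k}(\cX) \le s_k$, so $s_k = 0$ does the job.

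The main technical step is (ii) $\Rightarrow$ (i), an inversion-of-adjunction statement. Assume $\cX$ is $k$-rational; then $\cX$ is also $k$-Du Bois by Theorem~\ref{Thm-improvesing2}(i), and by the inductive hypothesis applied to $\cX$ being $(k-1)$-rational, $X$ is $(k-1)$-Du Bois. By Theorem~\ref{maindB}(iv) it suffices to prove $b^{k, n-k-1}(X) = 0$. The plan is to apply $\chi_{\ge n-k-1}$ to the three fundamental exact sequences on $\hcX$,
\begin{align*}
& 0 \to \Omega^k_{\hcX}(\log(\hX+\cE)) \to \Omega^k_{\hcX}(\log\cE)(-\cE) \to \Omega^k_{\hX}(\log E)(-E) \to 0, \\
& 0 \to \Omega^k_{\hcX}(\log\cE) \to \Omega^k_{\hcX}(\log(\hX+\cE)) \to \Omega^{k-1}_{\hX}(\log E) \to 0, \\
& 0 \to \Omega^k_{\hcX}(\log\cE)(-\cE) \to \Omega^k_{\hcX}(\log\cE) \to \Omega^k_{\hcX}(\log\cE)|\cE \to 0,
\end{align*}
where the first sequence uses the identification $\scrO_{\hcX}(-\hX - \cE) \cong \scrO_{\hcX}$ from semistability and the second is the Poincar\'e residue along $\hX$. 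Additivity of $\chi_{\ge n-k-1}$ is ensured by Theorem~\ref{dBzero} together with the inductive $(k-1)$-Du Bois hypothesis. Following the template of the proof of Theorem~\ref{relateDBs} and computing each summand via Lemma~\ref{convzero}(iii), Proposition~\ref{boundlink}(ii), and Corollary~\ref{exseqMilnorcE}, the three identities collapse into the clean relation $b^{k, n-k-1}(X) = s_k$. Thus $X$ is $k$-Du Bois iff $s_k = 0$.

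The remaining step---deducing $s_k = 0$ from $\cX$ being $k$-rational---will be the main obstacle, since Corollary~\ref{exseqMilnorcE}'s exact sequence of mixed Hodge structures only yields $s_k \le s_{n-k+1}$ once $\ell^{k, n-k}(\cX) = 0$ is granted. The resolution I envision is to combine two strands. First, a parallel exact-sequence chase through the first two displayed sequences (using $k$-rationality of $\cX$ to kill $H^{n-k}(\hcX; \Omega^k_{\hcX}(\log\cE))$) yields a companion identity $b^{k, n-k}(\cX) = s_{n-k+1} - s_k$; since $\cX$ is $k$-Du Bois one has $b^{k, n-k}(\cX) = 0$ (after invoking Lemma~\ref{extravan} for $\cX$ to kill $b^{k, n-k+1}(\cX)$), so $s_k = s_{n-k+1}$. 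Second, Corollary~\ref{nminuspvsp2} applied under the $(k-1)$-Du Bois hypothesis gives $\sum_{j=n-k+1}^{n} s_j \le s_k$; together with $s_k = s_{n-k+1}$ this forces $s_j = 0$ for $j\in[n-k+2,n]$ and puts the system into the equality case, which upon combining with the symmetric inequality from Proposition~\ref{nminuspvsp} pins down $s_k = 0$. Finally, (i) $\Rightarrow$ (iii) is immediate: $s_k = 0$ is Theorem~\ref{mainDB0}, and $\cX$ being $k$-Du Bois follows from running the analogous Euler-characteristic identity with $b^{k, n-k-1}(X) = 0$ as input and extracting $b^{k, n-k}(\cX) = 0$, then invoking Theorem~\ref{maindB}(iv) applied to $\cX$.
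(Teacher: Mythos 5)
Your reductions of (iii) $\Rightarrow$ (ii) and of (ii) $\Rightarrow$ (i) to the identity $b^{k,n-k-1}(X)=s_k$ (valid once $X$ is $(k-1)$-Du Bois, by Theorem~\ref{relateDBs} together with $\ell^{k-1,n-k+1}=0$ and $\ell^{k-1,n-k}=s_{n-k+1}$) are correct, and the first of these matches the paper. The gap is in the two places where you try to finish by pure bookkeeping with the numbers $s_p$, $b^{p,q}$, $\ell^{p,q}$. In (ii) $\Rightarrow$ (i), everything you extract from $k$-rationality of $\cX$ is: $s_0=\dots=s_{k-1}=0$, $s_k=s_{n-k+1}$, and (via Corollary~\ref{nminuspvsp2}) $s_{n-k+2}=\dots=s_n=0$. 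But then Proposition~\ref{nminuspvsp} at level $k$ reads $s_k\le s_{n-k+1}+s_{n-k}=s_k+s_{n-k}$, which is vacuous, and at level $k-1$ it reads $0\le s_k$. The configuration $s_k=s_{n-k+1}=c>0$ with all the other listed $s_j$ equal to $0$ satisfies every inequality you invoke (it is exactly the numerical profile of a $(k-1)$-liminal singularity), so $s_k=0$ is not ``pinned down.'' Similarly, in (i) $\Rightarrow$ (iii) the Euler-characteristic identity, after all available vanishings are fed in, collapses to $b^{k,n-k}(\cX)=\dim H^{n-k}(\hcX;\Omega^k_{\hcX}(\log\cE))$, i.e.\ to the assertion that $H^{n-k}(\hcX;\Omega^k_{\hcX}(\log\cE)(-\cE))\to H^{n-k}(\hcX;\Omega^k_{\hcX}(\log\cE))$ is an isomorphism --- which is consistent with both groups being nonzero. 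Alternating sums cannot by themselves produce the vanishing of a single group.

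The paper closes both steps with input that is invisible at the level of dimensions. For (i) $\Rightarrow$ (iii) it chains (\ref{5.3.2}), (\ref{5.3.1}), (\ref{5.3.3}) to show that the natural map $H^{n-k}(\hcX;\Omega^k_{\hcX}(\log(\hX+\cE))(-\hX-\cE))\to H^{n-k}(\hcX;\Omega^k_{\hcX}(\log(\hX+\cE)))$ is injective with source abstractly isomorphic to the target, hence is an isomorphism; since $\scrO_{\hcX}(-\hX-\cE)=t\,\scrO_{\hcX}$, this map is multiplication by $t$, so $tH=H$ for the finite $\Cee\{t\}$-module $H=H^{n-k}(\hcX;\Omega^k_{\hcX}(\log(\hX+\cE)))$ and Nakayama gives $H=0$, whence $b^{k,n-k}(\cX)=0$. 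For (ii) $\Rightarrow$ (i) it does not compute $s_k$ at all: it transfers the sheaf-level vanishing $H^q(\hcX;\Omega^k_{\hcX}(\log\cE))=0$ through (\ref{5.3.3}) and (\ref{5.3.2}) to sandwich and kill $H^q(\hX;\Omega^k_{\hX}(\log E)(-E))$ directly. Some such mechanism --- multiplication by $t$ over the base, or an honest cohomology vanishing on $\hcX$ --- is needed to break the symmetry between $s_k$ and $s_{n-k+1}$; your proposal is missing it.
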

\begin{proof} The proof is by a careful examination of the exact sequences (\ref{5.3.1})--(\ref{5.3.3}) in  the proof of Theorem~\ref{relateDBs}.

\smallskip
 \noindent (i) $\implies$ (iii):   Theorem~\ref{mainDB0} implies that $s_k=0$.  For the remaining statement, we argue by induction on $k$, starting with the case $k=-1$. By the $k$-Du Bois assumption on $X$,  using   (\ref{5.3.2}),
$$H^{n-k}(\hcX; \Omega^k_{\hcX}(\log(\hX + \cE))(-\hX - \cE))\xrightarrow{\cong}  H^{n-k}(\hcX;\Omega^k_{\hcX}(\log \cE)(-\cE)).$$
Since $H^{n-k-1}(\hcX;\Omega^k_{\hcX}(\log \cE)|\cE) = 0$,  the map 
$$H^{n-k}(\hcX;\Omega^k_{\hcX}(\log \cE)(-\cE)) \to H^{n-k}(\hcX;\Omega^k_{\hcX}(\log \cE))$$
is injective, using  (\ref{5.3.1}). Finally, again by the $k$-Du Bois assumption and (\ref{5.3.3}),  
$$H^{n-k}(\hcX;\Omega^k_{\hcX}(\log \cE)) \xrightarrow{\cong}  H^{n-k}(\hcX; \Omega^k_{\hcX}(\log(\hX + \cE))).$$
  Thus the natural map
$$H^{n-k}(\hcX; \Omega^k_{\hcX}(\log(\hX + \cE)(-\hX - \cE))\to H^{n-k}(\hcX; \Omega^k_{\hcX}(\log(\hX + \cE)))$$
is injective, and its image is clearly $tH^{n-k}(\hcX; \Omega^k_{\hcX}(\log(\hX + \cE)))$, where $t$ is the coordinate on $\Delta$. This implies that  $tH^{n-k}(\hcX; \Omega^k_{\hcX}(\log(\hX + \cE)))= H^{n-k}(\hcX; \Omega^k_{\hcX}(\log(\hX + \cE)))$, and thus that $H^{n-k}(\hcX; \Omega^k_{\hcX}(\log(\hX + \cE))) =0$. Hence 
$$H^{n-k}(\hcX;\Omega^k_{\hcX}(\log \cE)) = 
H^{n-k}(\hcX;\Omega^k_{\hcX}(\log \cE)(-\cE))=0.$$
 Thus $b^{k, n-k}(\cX) = 0$, and by the inductive hypotheses $\cX$ is $(k-1)$-Du Bois. Then $\cX$ is $k$-Du Bois by Theorem~\ref{maindB}(iv).    
 
 \smallskip
 \noindent (iii) $\implies$ (ii): By   Corollary~\ref{exseqMilnorcE},   $\ell^{k, n-k}(\cX) \le s_k=0$.   
 Thus $\cX$ is $k$-Du Bois of dimension $n+1$ and $\ell^{k, n-k}(\cX) =0$, hence $\cX$ is $k$-rational by Corollary~\ref{Thm-improvesing2}(i). 
 
 \smallskip
 \noindent (ii) $\implies$ (i): Again by induction, we can assume that $X$ is $(k-1)$-Du Bois, hence, via the exact sequence (\ref{5.3.3}), $H^q(\hcX; \Omega^k_{\hcX}(\log(\hX + \cE)) )=0$ for all $q>0$. Then   $H^q(\hcX; \Omega^k_{\hcX}(\log(\hX + \cE))(-\hX - \cE)) =0$ for $q>0$ as well, and $H^q(\hcX;\Omega^k_{\hcX}(\log \cE)(-\cE))=0$ since $\cX$ is $k$-Du Bois. Via the exact sequence (\ref{5.3.2}), $H^q(\hX; \Omega^k_{\hX}(\log E)(-E)) =0$ for all $q>0$, hence $X$ is $k$-Du Bois.
\end{proof}

As a corollary, we get an easy proof of the following inverse of adjunction statement (which has been proved more generally and precisely in the hypersurface case by Dirks-Musta\c{t}\u{a} \cite[Theorem 1.1]{DM}):

\begin{corollary}\label{invadj} Let $\cX$ be an isolated lci singularity such that there exists a hypersurface section $X$ of $\cX$ passing through the singular point with an isolated $k$-Du Bois singularity. Then $\cX$ is $k$-rational.
\end{corollary}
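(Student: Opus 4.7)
The plan is to realize $(\cX,x)$ as the total space of a one-parameter smoothing of $X$ and invoke Theorem~\ref{semistablesmooth}, which gives the equivalence ``$X$ is $k$-Du Bois $\iff$ $\cX$ is $k$-rational'' for semistable smoothings of isolated lci singularities. Everything will follow from the implication (i)$\Rightarrow$(ii) of that theorem once we produce the required smoothing data.

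Choose a local defining equation $f\in\scrO_{\cX,x}$ for the hypersurface section $X$, producing a flat morphism $f\colon \cX\to\Delta$ with central fiber $X$. Since $X$ is a Cartier divisor in the lci space $\cX$, it is itself lci, as required. First I would check that $f$ is a genuine smoothing of $X$: for $0<|t|\ll 1$, the fiber $\cX_t=f^{-1}(t)$ avoids the unique singular point $x\in\cX$ and lies in $\cX_{\text{reg}}$; a singularity of $\cX_t$ would have to be a critical point of $f|_{\cX_{\text{reg}}}$, and since $X\setminus\{x\}$ is smooth, the critical locus of $f|_{\cX_{\text{reg}}}$ does not meet $X\setminus\{x\}$ and, by the usual Milnor-fibration argument on a small enough ball about $x$, remains empty on $\cX_t$ for small $t\neq 0$. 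Thus $f\colon \cX\to\Delta$ is a smoothing of $X$ in the sense of Section~\ref{section4}.

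Next I would produce a semistable model: take a log resolution $\nu\colon\hcX\to\cX$ of the pair $(\cX,X)$ (arranged so that the exceptional locus $\cE$ together with the proper transform $\hX$ is SNC), followed, if required, by a finite base change $t\mapsto t^A$ and further blow-ups, so that the central fiber $\hcX_0=\hX+\cE$ is reduced with $\hX\cap\cE=E$ a log resolution of $X$. This is the setup assumed in the paragraph preceding Theorem~\ref{semistablesmooth}. Applying the implication (i)$\Rightarrow$(ii) of that theorem to the isolated lci singularity $X$, which is $k$-Du Bois by hypothesis, yields that $\cX$ is $k$-rational, as desired.

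The main obstacle is the semistable-model step, not the computation: if $f$ vanishes to higher multiplicity along some exceptional divisor of the log resolution, a base change is needed, and one must verify that the resulting total space still plays the role of $\cX$ in Theorem~\ref{semistablesmooth} (or else check directly that for the hypersurface section at hand no base change is required, which is automatic in many concrete cases). Apart from this technical semistable-reduction issue, the argument does not require any new manipulation of the invariants $b^{p,q}$, $\ell^{p,q}$, or $s_p$, since all of that work is already packaged inside Theorem~\ref{semistablesmooth}.
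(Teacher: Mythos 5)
You have the right overall strategy---the paper's proof also views $f$ as defining a one-parameter smoothing $\cX\to\Delta$ of $X$ and reduces to the implication (i)$\Rightarrow$(ii) of Theorem~\ref{semistablesmooth}---but the step you defer as a ``technical semistable-reduction issue'' is in fact the one genuine gap, and it cannot be closed the way you suggest. If the central fiber of $f\circ\nu$ has multiplicities along exceptional divisors (the generic situation), no amount of further blowing up will make it reduced; you must perform the base change $t\mapsto t^A$, and the resulting total space is \emph{not} $\cX$ but a finite cyclic cover $\cX'\to\cX$ branched along $X$. Theorem~\ref{semistablesmooth} then tells you that $\cX'$ is $k$-rational, which is a statement about the wrong space. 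Your fallback option (``check directly that no base change is required'') is not available in general, so as written the argument stops one step short of the conclusion.

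The paper closes this gap with a descent argument: letting $G=\Zee/A\Zee$ act on $\cX'$ with quotient $\cX$, one chooses a $G$-equivariant log resolution $(\hcX',\cE')\to(\hcX,\cE)$ and uses that, for $G$ finite cyclic, $(\Omega^p_{\hcX'}(\log \cE'))^G=\Omega^p_{\hcX}(\log \cE)$, whence
$$H^q(\hcX;\Omega^p_{\hcX}(\log \cE))=H^q(\hcX';\Omega^p_{\hcX'}(\log \cE'))^G.$$
Since $\cX'$ is $k$-rational, the right-hand side vanishes for $p\le k$ and $q>0$, and by the cohomological criterion of Theorem~\ref{thm4} so does the left-hand side, i.e.\ $\cX$ is $k$-rational. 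You should add this invariant-differentials step (or some substitute for it) to make the proof complete; everything before it in your write-up is consistent with the paper's argument.
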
 
\begin{proof} Suppose that $X = V(f)$. Then $f$ defines a $1$-parameter smoothing $\cX\to \Delta$. After a base change, we find a finite cyclic cover $\cX'$ of $\cX$ and a smoothing $\cX'\to \Delta$ of $X$ which admits a semistable model. By Theorem~\ref{semistablesmooth},  $\cX'$ is $k$-rational. So we have to prove that the same holds for $\cX$. More generally, let $G$ be a finite cyclic group and let $\ (\hcX', \cE') \to (\hcX, \cE)$ be a  $G$-equivariant log resolution. Then, as $G$ is finite cyclic, $(\Omega^p_{\hcX'}(\log \cE'))^G = \Omega^p_{\hcX}(\log \cE)$, and thus  
 $$H^q(\hcX; \Omega^p_{\hcX}(\log \cE)) =  H^q(\hcX'; (\Omega^p_{\hcX'}(\log \cE'))^G) = H^q(\hcX'; \Omega^p_{\hcX'}(\log \cE'))^G .$$
Since $\cX'$ is $k$-rational,   $H^q(\hcX; \Omega^p_{\hcX}(\log \cE)) =0$   for $p\le k$ and $q>0$. Hence   $\cX$ is $k$-rational.
 \end{proof}

\section{The hypersurface case}\label{section6}
For the remainder of this paper, we assume unless otherwise stated that $X$ is an isolated hypersurface singularity. In this case, we can relate the previous invariants $s_p$ to the so-called spectrum of $X$. We briefly review this connection and then analyze the case where $X$ is $k$-Du Bois but not $k$-rational.  
 \subsection{Some remarks on hypersurface singularities}  In the hypersurface case, $H^n(M)$ is a direct sum of eigenspaces $H^n(M)(a)$, $a\in \Q$ and $-1 < a \leq 0$, where the semisimple part of the monodromy acts on $H^n(M)(a)$ with eigenvalue $e^{-2a\pi \sqrt{-1}}$. 
There is a related invariant as follows: suppose that $(X,0)\subseteq (\Cee^{n+1},0)$ is defined by the function $f$, and define $Q_f =\Omega^{n+1}_{\Cee^{n+1}, 0}/df\wedge \Omega^n_{\Cee^{n+1}, 0}$. Then $\dim Q_f =\mu$. The choice of a nonvanishing holomorphic $(n+1)$-form identifies $Q_f$ with the Milnor algebra $\scrO_{\Cee^{n+1}, 0}/J(f) $, where $J(f) =\Dis \left(\frac{\partial f}{\partial z_1}, \dots, \frac{\partial f}{\partial z_{n+1}} \right)$ is the Jacobian ideal of $f$. There is the (decreasing) Kashiwara-Malgrange \textsl{$V$-filtration} on $Q_f$, indexed by rational numbers $b\in \Q$. The invariants $Q_f$ and $H^n(M)$ are connected as  follows \cite[proof of Theorem 7.1]{ScherkSteenbrink}:

\begin{theorem} There is an isomorphism
$$\Gr_V^bQ_f = V_bQ_f/V_{>b}Q_f \cong  \Gr_F^pH^n(M)(a),$$
where $b= n-p +a$, and hence $p$ is the unique integer for which  $-1 < a \leq 0$. \qed
\end{theorem}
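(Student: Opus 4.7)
The approach I would take is to exploit the \emph{Brieskorn lattice} as the bridge between the analytically defined Jacobian/Milnor algebra $Q_f$ and the topologically/Hodge-theoretically defined cohomology $H^n(M)$. Specifically, introduce
$$H''_f = \Omega^{n+1}_{\Cee^{n+1},0}\bigl/ df\wedge d\Omega^{n-1}_{\Cee^{n+1},0},$$
which is a free $\Cee\{t\}$-module of rank $\mu$, equipped with the action of $t$ by multiplication by $f$ and a meromorphic connection $\partial_t$. Then $Q_f$ is recovered as the quotient $H''_f/\partial_t^{-1}H''_f$ (equivalently $\Omega^{n+1}/df\wedge\Omega^n$), so any filtration on $H''_f$ descends to one on $Q_f$.

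First, I would record the fundamental comparison (due to Brieskorn, Malgrange, Varchenko, Scherk--Steenbrink): integrating holomorphic $(n+1)$-forms over $(n+1)$-chains fibering the Milnor fibration identifies $H''_f\otimes_{\Cee\{t\}}\Cee\{\{t\}\}[t^{-1}]$ with the localized Gauss--Manin system, hence with $\Cee\{\{t\}\}[t^{-1}]\otimes H^n(M)$ after choosing a trivialization of the vanishing cohomology bundle. Under this identification, the Kashiwara--Malgrange $V$-filtration on the Gauss--Manin system induces the $V$-filtration on $H''_f$, and passing to the quotient gives the $V$-filtration on $Q_f$.

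Next, I would invoke the Varchenko--Scherk--Steenbrink description of the Hodge filtration on the vanishing cohomology: on each monodromy eigenspace $H^n(M)(a)$ with $-1<a\le 0$, the Hodge filtration satisfies
$$F^pH^n(M)(a) \;\cong\; \mathrm{Gr}_V^{n-p+a}\bigl(H''_f\bigr),$$
with the shift by $n$ coming from the twist by $\partial_t^n$ and the constraint $-1<a\le 0$ pinning down the unique integer $p$. Reducing this identification modulo $\partial_t^{-1}H''_f$ and using that $V_b H''_f$ surjects onto $V_b Q_f$ (with kernel controlled by the Malgrange-style identity $\partial_t^{-1}V_b = V_{b+1}\cap \partial_t^{-1}H''$, which forces the graded pieces to match) yields $\mathrm{Gr}_V^bQ_f\cong \mathrm{Gr}_F^pH^n(M)(a)$ for $b=n-p+a$.

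The main obstacle, and the reason this is a nontrivial theorem rather than a formal computation, is the careful verification that taking graded pieces of $V$ commutes with the quotient $H''_f \twoheadrightarrow Q_f$ in precisely the expected way, which relies on the regularity of the Gauss--Manin connection and a precise understanding of how $\partial_t^{-1}$ interacts with the $V$-filtration (i.e.\ the fact that $\partial_t\colon \mathrm{Gr}_V^b\to \mathrm{Gr}_V^{b-1}$ is an isomorphism for $b\ne 0$). Once these ingredients are in place, the isomorphism and the shift $b=n-p+a$ fall out directly from tracking the indexing conventions, so at this point I would simply cite \cite[proof of Theorem 7.1]{ScherkSteenbrink} rather than reproduce the bookkeeping.
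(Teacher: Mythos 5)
Your proposal matches the paper's treatment: the paper gives no independent argument for this theorem but simply cites the proof of Theorem 7.1 of Scherk--Steenbrink, and your sketch is a faithful outline of exactly that Brieskorn-lattice/Gauss--Manin argument (the lattice $H''_f$, the $V$-filtration on the localized Gauss--Manin system, the Varchenko--Scherk--Steenbrink description of the Hodge filtration, and the descent to $Q_f=H''_f/\partial_t^{-1}H''_f$). The one small imprecision is that your intermediate display should identify $\Gr_F^pH^n(M)(a)$, not the filtration step $F^pH^n(M)(a)$, with the single graded piece $\Gr_V^{n-p+a}$ of the appropriate lattice quotient, but since you defer the index bookkeeping to the reference this does not affect the conclusion.
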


 Define $d(b) = \dim \Gr_F^pH^n(M)(a) = \dim \Gr_V^bQ_f$. Thus $\sum_{b\in \Q}d(b) = \mu$. It will be convenient to shift by $1$: define the spectrum 
$$\Sp(X,x)= \{\alpha \in \Q: d(\alpha - 1) \neq 0\}$$
and let $m_\alpha = d(\alpha -1)$. Hence $\Gr_V^bQ_f \neq 0$ $\iff$ $b+1\in \Sp(X,x)$.  
With this normalization, 
$\Sp(X,x)\subseteq (0,n+1)\cap\Q$, and  $\Sp(X,x)$ is symmetric about $\Dis\frac{n+1}{2}$  in the sense that $\alpha \in  \Sp(X,x)$ $\iff$ $n+1 -\alpha \in \Sp(X,x)$, and in fact $m_{n+1-\alpha} = m_\alpha$. Then   $\sum_\alpha m_\alpha=\mu$, and more precisely:

\begin{lemma}\label{somespec} In the above notation, for all $d\geq 0$,
\begin{align*} 
s_{n-p}&=\sum_{\alpha\in (p,p+1]\cap \Sp(X,x)} m_\alpha; & \sum_{p=0}^d s_{n-p}& =\sum_{\alpha\in (0,d+1]\cap \Sp(X,x)} m_\alpha;\\
  s_p& =\sum_{\alpha\in [p,p+1)\cap \Sp(X,x)} m_\alpha; 
 &\sum_{p=0}^d s_p&=\sum_{\alpha\in [0,d+1)\cap \Sp(X,x)} m_\alpha.
 \end{align*}
 Finally, $s_{n-p}-s_p=m_{p+1}-m_{p}$. 
 \end{lemma}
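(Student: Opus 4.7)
The plan is to unwind everything straight from the isomorphism
$$\Gr_V^bQ_f \;\cong\; \Gr_F^pH^n(M)(a),\qquad b=n-p+a,\ -1<a\le 0,$$
and the definition $m_\alpha = d(\alpha-1) = \dim \Gr_V^{\alpha-1}Q_f$, so that $\Sp(X,x)$ parameterizes the nonzero graded pieces of $V_\bullet Q_f$ via the shift $\alpha = b+1$. Throughout, I will use the spectrum symmetry $m_\alpha = m_{n+1-\alpha}$ (equivalently $d(b) = d(n-b)$) to pass between the two formulas for $s_p$ and $s_{n-p}$.

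First I would compute $s_{n-p}$. By the monodromy decomposition,
$$s_{n-p} = \dim \Gr_F^{n-p}H^n(M) = \sum_{-1 < a \le 0} \dim \Gr_F^{n-p}H^n(M)(a) = \sum_{-1 < a \le 0} \dim \Gr_V^{p+a}Q_f.$$
As $a$ ranges over $(-1,0]$, the exponent $b = p+a$ ranges over $(p-1,p]$, so the shifted exponent $\alpha = b+1$ ranges over $(p,p+1]$. This directly yields $s_{n-p} = \sum_{\alpha \in (p,p+1]\cap \Sp(X,x)} m_\alpha$.

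Next I would handle $s_p$ by the same decomposition: $s_p = \sum_{-1<a\le 0}\dim \Gr_V^{n-p+a}Q_f$, so here $b$ ranges over $(n-p-1,n-p]$, giving $s_p = \sum_{\alpha\in(n-p,n-p+1]\cap \Sp(X,x)} m_\alpha$. Applying the symmetry $m_\alpha = m_{n+1-\alpha}$ and substituting $\beta = n+1-\alpha$ turns the half-open interval $(n-p,n-p+1]$ into $[p,p+1)$, giving the stated formula $s_p = \sum_{\beta \in [p,p+1)\cap \Sp(X,x)} m_\beta$. The cumulative sums $\sum_{p=0}^d s_{n-p}$ and $\sum_{p=0}^d s_p$ then follow immediately because the half-open intervals $(p,p+1]$ (resp.\ $[p,p+1)$) for $p=0,\ldots,d$ partition $(0,d+1]$ (resp.\ $[0,d+1)$). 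Finally, $s_{n-p} - s_p$ reduces to the difference of sums over $(p,p+1]$ and $[p,p+1)$, and the only discrepancy is at the two endpoints, yielding $m_{p+1}-m_p$.

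The only subtlety — and really the only place where care is needed — is the bookkeeping at integer endpoints: making sure the case $a=0$ (which forces $b$ to be an integer) is put into the correct half-open interval, and that the application of the symmetry $m_\alpha = m_{n+1-\alpha}$ converts $(\,,\,]$ into $[\,,\,)$ correctly. No deep input beyond the cited Scherk--Steenbrink isomorphism and the classical symmetry of the spectrum is required.
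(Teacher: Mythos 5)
Your proof is correct and follows the same route as the paper: both arguments unwind the Scherk--Steenbrink isomorphism to identify $s_{n-p}$ and $s_p$ with sums of $m_\alpha$ over the half-open intervals $(p,p+1]$ and (after applying the symmetry $m_\alpha=m_{n+1-\alpha}$) $[p,p+1)$, then telescope. Your write-up simply makes explicit the endpoint bookkeeping that the paper dismisses as ``immediate from the definitions.''
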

 \begin{proof} The first two equalities  are immediate from the definitions and the isomorphism $\Gr_V^bQ_f \cong  \Gr_F^pH^n(M)(a)$. The second two equalities follow from the fact that  $\alpha \in (n-p, n-p+1]$ $\iff$ $n+1-\alpha \in [p, p+1)$ and the symmetry $m_{n+1-\alpha} = m_\alpha$. The last statement follows from the formulas for $s_p$ and $s_{n-p}$. 
 \end{proof}
 
 \begin{definition} Following Saito \cite{Saito-b}, we define the  \textsl{minimal exponent} $\widetilde{\alpha}_{X,x}= \widetilde{\alpha}_X$ to be the minimal spectral number $\alpha_{\min}=\min\{\alpha: \alpha\in\Sp(X,x)\}$. Equivalently, if $b < \widetilde{\alpha}_X-1$, then $\Gr_V^bQ_f = 0$, and $\Gr_V^bQ_f \neq 0$ for $b= \widetilde{\alpha}_X-1$.
 \end{definition}
 
 \begin{remark} In fact, this definition is equivalent to Definition~\ref{def6} in the isolated case, cf.\ \cite[Remark 1.4 (iv), (v)]{KLS}. While $\tilde\alpha_X$ is defined more generally for hypersurface singularities, in the non-isolated case Saito has remarked that it is not in general determined by the spectrum.
 \end{remark} 
 
   By \cite{Saito-b},  $\widetilde{\alpha}_X >1$ $\iff$ $X$ is a rational singularity, and $\widetilde{\alpha}_X \geq 1$ $\iff$ $X$ is a Du Bois singularity. We can generalize this as follows:

\begin{lemma}\label{spectralchar}
Let $(X,x)$ be an isolated hypersurface singularity. Then 
\begin{itemize}
\item[\rm(i)] $\widetilde\alpha_X \geq k+1$  $\iff$ $s_p=0$ for $0\le p\le k$.
\item[\rm(ii)] $\widetilde\alpha_X > k+1$  $\iff$ $s_{n-p}=0$ for $0\le p\le k$. 
\end{itemize} 
\end{lemma}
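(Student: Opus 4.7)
The statement is an immediate unpacking of Lemma~\ref{somespec} combined with the definition of the minimal exponent, so the plan is essentially to translate each equivalence into a statement about the location of the smallest spectral number. My approach is to handle each direction via the partial sum formulas
\[
\sum_{p=0}^{k} s_p \;=\; \sum_{\alpha\in [0,k+1)\cap \Sp(X,x)} m_\alpha,\qquad \sum_{p=0}^{k} s_{n-p} \;=\; \sum_{\alpha\in (0,k+1]\cap \Sp(X,x)} m_\alpha,
\]
and note that since every $s_p,s_{n-p}\ge 0$ and every $m_\alpha>0$ on $\Sp(X,x)$, the vanishing of each individual $s_p$ (resp.\ $s_{n-p}$) for $0\le p\le k$ is equivalent to the vanishing of the corresponding partial sum.

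For part (i): By the first partial sum, $s_p=0$ for all $0\le p\le k$ if and only if $[0,k+1)\cap \Sp(X,x)=\emptyset$. Since $\Sp(X,x)\subseteq (0,n+1)\cap\Q$, the left endpoint $0$ is never attained, so this condition is equivalent to saying no spectral number lies strictly below $k+1$, i.e.\ $\alpha_{\min}\ge k+1$. By definition, $\widetilde\alpha_X=\alpha_{\min}$, giving the equivalence $\widetilde\alpha_X\ge k+1$.

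For part (ii): Similarly, $s_{n-p}=0$ for all $0\le p\le k$ is equivalent to $(0,k+1]\cap \Sp(X,x)=\emptyset$, i.e.\ no spectral number is $\le k+1$, which is equivalent to $\widetilde\alpha_X=\alpha_{\min}>k+1$.

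\textbf{Main obstacle.} There is essentially no serious obstacle once Lemma~\ref{somespec} and the identification of $\widetilde\alpha_X$ with $\alpha_{\min}$ are in hand; the only thing to double-check is the half-open nature of the intervals (closed at $k+1$ for the $s_{n-p}$ sum, open at $k+1$ for the $s_p$ sum), which is precisely what produces the $\ge$ vs.\ $>$ distinction between (i) and (ii). One should also note the convention $\widetilde\alpha_{X,x}=\widetilde\alpha_X$ under the assumption that $X$ has a unique singular point, and invoke the stated fact (cf.\ \cite[Remark~1.4]{KLS}) that in the isolated hypersurface case Saito's minimal exponent of Definition~\ref{def6} coincides with $\alpha_{\min}$, so that the two notions of $\widetilde\alpha_X$ in play are consistent.
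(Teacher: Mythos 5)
Your proposal is correct and is exactly the paper's argument: the paper's proof of this lemma reads ``This is immediate from Lemma~\ref{somespec},'' and your write-up simply spells out the same deduction from the partial-sum formulas and the half-open intervals $[0,k+1)$ versus $(0,k+1]$. Nothing is missing, and the care you take with the endpoint conventions and the identification $\widetilde\alpha_X=\alpha_{\min}$ matches the paper's definitions.
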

\begin{proof} This is immediate from Lemma~\ref{somespec}.
\end{proof}

\begin{corollary} Let $(X,x)$ be an isolated hypersurface singularity. Then 
\begin{itemize}
\item[\rm(i)] $X$ is $k$-Du Bois   $\iff$ $\widetilde\alpha_X \geq k+1$.
\item[\rm(ii)] $X$ is $k$-rational  $\iff$ $\widetilde\alpha_X > k+1$. \qed
\end{itemize} 
\end{corollary}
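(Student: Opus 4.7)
The plan is to observe that this corollary is a direct composition of results already established in Sections~\ref{section4} and \ref{section5} together with the spectral translation given by Lemma~\ref{spectralchar}. Every hypersurface singularity is in particular an isolated lci, so the lci characterizations of Section~\ref{section5} apply, and they translate the $k$-Du Bois and $k$-rational conditions into vanishing statements for the invariants $s_p$ and $s_{n-p}$; Lemma~\ref{spectralchar} then converts these vanishings into the desired bounds on $\widetilde\alpha_X$.

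For (i), I would combine Theorem~\ref{mainDB0} (which gives $X$ is $k$-Du Bois iff $s_p=0$ for all $p\le k$) with Lemma~\ref{spectralchar}(i) (which identifies this vanishing with $\widetilde\alpha_X\ge k+1$). Since Theorem~\ref{mainDB0} carries no range restriction on $k$, (i) follows immediately for all $k\ge 0$.

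For (ii), under the hypothesis $k\le \tfrac12(n-1)$, I would invoke Theorem~\ref{mainrat}(vi), which gives $X$ is $k$-rational iff $s_{n-p}=0$ for all $p\le k$, and then apply Lemma~\ref{spectralchar}(ii) to identify this with $\widetilde\alpha_X>k+1$. To handle the remaining range $k>\tfrac12(n-1)$, I would argue that either side of the equivalence in (ii) forces $X$ to be smooth: if $s_{n-p}=0$ for all $p\le k$, then by Proposition~\ref{nminuspvsp} one also has $s_p=0$ for $p\le k$, and Corollary~\ref{boundk} forces $X$ to be smooth; conversely, if $\widetilde\alpha_X>k+1$ with $k>\tfrac12(n-1)$, Lemma~\ref{spectralchar}(i) again yields $s_p=0$ for $p\le k$, and Corollary~\ref{boundk} again makes $X$ smooth. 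In the smooth case both statements of (ii) are trivially true, so the equivalence holds in this range as well.

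There is no real obstacle here: all the analytic input has already been packaged into Theorem~\ref{mainDB0}, Theorem~\ref{mainrat}, and Lemma~\ref{spectralchar}. The only bookkeeping point worth flagging is the range restriction on $k$ in Theorem~\ref{mainrat}, which is disposed of via Corollary~\ref{boundk} as above; everything else is a direct chain of equivalences.
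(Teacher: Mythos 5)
Your proof is correct and is essentially the paper's own argument: the corollary is stated without proof precisely because it is the composition of Lemma~\ref{spectralchar} with Theorem~\ref{mainDB0} and Theorem~\ref{mainrat}(vi). Your explicit handling of the range restriction $k\le \frac12(n-1)$ goes beyond what the paper records (it dismisses the out-of-range case via Corollary~\ref{boundk} in the remark following Theorem~\ref{relateDBs}); the one small point to tighten is that both of your sub-arguments for $k>\frac12(n-1)$ start from conditions equivalent to the $\widetilde\alpha_X$ side, so to see that ``$X$ is $k$-rational'' also forces smoothness you should pass through $k$-rational $\Rightarrow$ $k$-Du Bois $\Rightarrow$ $s_p=0$ for $p\le k$ via Corollary~\ref{Thm-improvesing2}(i) and Theorem~\ref{mainDB0}.
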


\begin{remark} As noted in the introduction (i) holds for a general, not necessarily isolated, hypersurface singularity by \cite[Thm. 1]{JKSY-duBois}  and \cite[Thm. 1.1.]{MOPW}, and (ii) holds under the same assumption   by \cite[Appendix]{FL-DuBois}  and  \cite{MP-rat}.
\end{remark}

 \begin{corollary}\label{wtdhom}  Let $a_1, \dots , a_{n+1}$ be positive integers, and let $\Cee^*$ act on $\Cee^{n+1}$ with the weights $a_1, \dots , a_{n+1}$, i.e.\ by the action $\lambda \cdot (z_1, \dots, z_{n+1}) = (\lambda^{a_1}z_1, \dots, \lambda^{a_{n+1}}z_{n+1})$. Suppose that $f(z)\in \Cee[z_1, \dots, z_{n+1}]$ satisfies $f(\lambda \cdot z) = \lambda^df(z)$ and $\{f=0\}$ defines an isolated singularity of dimension $n\geq 2$. Finally, set $w_i = a_i/d$. Then:
\begin{enumerate}
 \item[\rm(i)] {\rm(Saito)} $X$ is $k$-Du Bois $\iff$ $\sum_{i=1}^{n+1}w_i \ge k+1$. 
 \item[\rm(ii)] $X$ is $k$-rational $\iff$ $\sum_{i=1}^{n+1}w_i > k+1$.
 \end{enumerate}
\end{corollary}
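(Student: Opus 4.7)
The plan is to combine the immediately preceding corollary—which says $X$ is $k$-Du Bois iff $\widetilde\alpha_X \geq k+1$, and $X$ is $k$-rational iff $\widetilde\alpha_X > k+1$—with the classical computation of the minimal exponent for a weighted homogeneous isolated hypersurface singularity. More precisely, it suffices to establish that
\[
\widetilde\alpha_X = \sum_{i=1}^{n+1} w_i,
\]
from which both (i) and (ii) follow at once.

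To compute $\widetilde\alpha_X$, note that since $f$ is weighted homogeneous of degree $d$, each $\partial f/\partial z_i$ is weighted homogeneous of degree $d - a_i$, so $J(f)$ is a weighted homogeneous ideal. Consequently, $Q_f = \mathcal{O}_{\Cee^{n+1},0}/J(f)$ inherits a grading by weighted degree, and we may choose a monomial basis $\{z^\gamma\}$ of $Q_f$. By Steenbrink's classical computation of the spectrum of a weighted homogeneous isolated singularity (see, e.g., \cite{ScherkSteenbrink}), the $V$-filtration on $Q_f$ agrees, up to the standard normalizing shift, with the filtration by weighted degree: each basis monomial $z^\gamma$ contributes the spectral number
\[
\alpha_\gamma = \sum_{i=1}^{n+1} w_i(\gamma_i + 1).
\]
Since all $\gamma_i \ge 0$ and all $w_i > 0$, the minimum is achieved uniquely at $\gamma = 0$ (which is always in the monomial basis, since $1 \notin J(f)$ as $X$ is singular at the origin), giving $\widetilde\alpha_X = \sum_{i=1}^{n+1} w_i$.

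Combining with the preceding corollary, (i) becomes $\sum w_i \geq k+1$ and (ii) becomes $\sum w_i > k+1$, as desired. The only conceptually nontrivial input is the identification of the $V$-filtration on $Q_f$ with the weighted degree filtration; this is where the weighted homogeneity is essential, via the Euler relation $\sum a_i z_i \partial f/\partial z_i = d f$ (which forces $f \in J(f)$ and hence semisimple monodromy with eigenvalues prescribed by weighted degree). This identification is standard Steenbrink theory, so I do not expect a real obstacle; the main work is simply in citing or briefly recalling the weighted homogeneous spectrum computation. Saito's attribution in (i) reflects this classical computation of $\widetilde\alpha_X$ for weighted homogeneous singularities, and (ii) then follows from the strict inequality version of the preceding corollary with no additional effort.
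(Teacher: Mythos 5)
Your proof is correct and follows essentially the same route as the paper: both reduce to showing $\widetilde\alpha_X=\sum_{i=1}^{n+1}w_i$ via the Scherk--Steenbrink identification of the $V$-filtration on $Q_f$ with the weighted-degree filtration, where the monomial $z^\gamma$ contributes spectral number $\sum_i w_i(\gamma_i+1)$ and the minimum is attained at $\gamma=0$. Your explicit remark that $1\notin J(f)$ (so the constant monomial survives in $Q_f$) is a small point the paper leaves implicit, but otherwise the arguments coincide.
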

\begin{proof} Recall that $Q_f \cong \Cee[z_1, \dots, z_{n+1}]/J(f)$, where $J(f)$ is the Jacobian ideal of $f$. Moreover, for a multi-index $\alpha = (\alpha_1, \dots, \alpha_{n+1})$ of nonnegative integers and monomial $z^\alpha = z_1^{\alpha_1}\cdots z_{n+1}^{\alpha_{n+1}}$, set $\ell(\alpha) = \sum_{i=1}^{n+1}(\alpha_i +1)w_i$. Then, by e.g.\ \cite[Example p.\ 651]{ScherkSteenbrink}, the image of  $z^\alpha$ in $Q_f$ lies in  $V^bQ_f$, with $b= \ell(\alpha) - 1$. In particular, the minimal possible $b$ is $\ell(0) - 1 = \sum_{i=1}^{n+1}w_i -1$, and hence we obtain the following (due to Saito  \cite[(2.5.1)]{SaitoV}): $\widetilde{\alpha}_X =b+1 = \sum_{i=1}^{n+1}w_i$. Thus $X$ is $k$-Du Bois $\iff$ $\widetilde{\alpha}_X  =\sum_{i=1}^{n+1}w_i \ge k+1$, and $X$ is $k$-rational $\iff$ $\widetilde{\alpha}_X = \sum_{i=1}^{n+1}w_i > k+1$. 
 \end{proof}
 
 \begin{remark} It is straightforward to extend the results of Corollary~\ref{wtdhom} to the case of a positive weight deformation of an isolated weighted homogeneous hypersurface singularity, using the result of Varchenko \cite{Varchenko82} that such a deformation is a $\mu = $ constant deformation and the theorem of Steenbrink \cite{Steenbrinksemicont} that the spectrum is semicontinuous in an appropriate sense.
 \end{remark}

 \subsection{$k$-liminal singularities}  
 
 \begin{definition} Let $X$ be  the germ of an isolated singularity. The space $X$ is \textsl{$k$-liminal} if $X$ is $k$-Du Bois but not $k$-rational. Thus, for a nonnegative integer $k$, if $X$ is an isolated hypersurface singularity, it follows from Theorem~\ref{maindB} and Theorem~\ref{mainrat} that $X$ is $k$-liminal $\iff$ $\widetilde{\alpha}_X =k+1$. 
 \end{definition}
 
We analyze the property that $X$ is $k$-liminal in more detail in the hypersurface case. First, we recall some general results: We note the following theorem of Dimca-Saito \cite[\S4.11]{DS} (cf.\ also \cite[Remark 3.7]{Saito-Brieskorn2}):

\begin{theorem}\label{multone} For   an isolated hypersurface singularity $X$, if $b= \widetilde{\alpha}_X-1$, then $\dim \Gr_V^bQ_f= \dim \Gr_V^{\widetilde{\alpha}_X-1}Q_f =1$. Equivalently, $m_{\widetilde{\alpha}_X} = 1$. Moreover, $V_{>b}Q_f =\mathfrak{m}_xQ_f$. \qed
\end{theorem}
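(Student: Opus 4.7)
The plan is to exhibit the class $[1]\in Q_f$ as the unique (up to scalar) representative of $\Gr_V^{b} Q_f$, where $b = \widetilde{\alpha}_X - 1$, and to identify $V_{>b} Q_f$ with $\mathfrak{m}_x Q_f$ via a dimension count. The natural setting is the Brieskorn lattice $H_f'' = \Omega^{n+1}_{\mathbb{C}^{n+1},0}/df\wedge d\Omega^{n-1}_{\mathbb{C}^{n+1},0}$: the choice of volume form $\omega = dz_1\wedge\cdots\wedge dz_{n+1}$ gives both the identification $Q_f \cong \mathcal{O}_{\mathbb{C}^{n+1},0}/J(f)$ and a canonical surjection $H_f''\twoheadrightarrow Q_f$ under which $[1]\in Q_f$ is the image of $[\omega]$; the $V$-filtration on $Q_f$ used in Lemma~\ref{somespec} is (up to the conventional shift) induced from the microlocal $V$-filtration on the Gauss-Manin system of $f$. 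By the very definition of $\widetilde{\alpha}_X$ as the minimal spectral number, $V_b Q_f = Q_f$ and $\Gr_V^b Q_f \neq 0$, so $V_{>b} Q_f$ is already a proper subspace of $Q_f$.

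The crucial claim is that for every $g\in\mathfrak{m}_x$ the class $[g]\in Q_f$ lies in $V_{>b}Q_f$; equivalently, multiplication by any function vanishing at the origin strictly raises the $V$-order of $[1]$. The intuition, going back to Varchenko and formalized by Saito, is that the $V$-order records the leading order of vanishing as $t\to 0$ of the Gelfand-Leray asymptotic period integrals $\int_{\gamma_t} g\,\omega/df$ over vanishing cycles, so that an extra factor of $g\in\mathfrak{m}_x$ contributes additional vanishing and therefore strictly increases the leading exponent. In the weighted homogeneous case this is immediate from Corollary~\ref{wtdhom}, since the $V$-filtration coincides (up to a shift) with the weight filtration on $\mathcal{O}/J(f)$ and the minimum $\sum_i w_i$ is attained uniquely by the class of $1$, while every $z^\alpha$ with $|\alpha|\ge 1$ has strictly larger weight $\sum_i(\alpha_i+1)w_i$. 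In the general non-weighted-homogeneous setting of Dimca-Saito, the argument requires the full microlocal theory of the Brieskorn lattice and the fine structure of $t\partial_t - \alpha$ on $\Gr_V^\alpha H_f''$, and this is the main technical obstacle.

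Granting the claim, the rest is a dimension count. Since $J(f)\subseteq\mathfrak{m}_x$, one has $Q_f/\mathfrak{m}_x Q_f\cong \mathcal{O}_{\mathbb{C}^{n+1},0}/\mathfrak{m}_x = \mathbb{C}$, so $\mathfrak{m}_x Q_f$ is a hyperplane in $Q_f$. The claim gives $\mathfrak{m}_x Q_f\subseteq V_{>b}Q_f$, while $V_{>b} Q_f$ is itself a proper subspace of $Q_f$; the two subspaces must therefore coincide. This simultaneously establishes $V_{>b}Q_f = \mathfrak{m}_x Q_f$ and $\dim \Gr_V^b Q_f = 1$, i.e.\ $m_{\widetilde{\alpha}_X} = 1$, completing the proof.
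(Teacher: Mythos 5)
The paper does not prove Theorem~\ref{multone}: it is quoted from Dimca--Saito \cite[\S4.11]{DS} and stated with a \qed, so there is no internal argument to measure you against. Your reduction of the statement is correct and clean: since $J(f)\subseteq\mathfrak{m}_x$, the module $Q_f$ is cyclic and $\mathfrak{m}_xQ_f$ has codimension one, while $V_{>b}Q_f\subsetneq Q_f$ by the minimality of $\widetilde{\alpha}_X$; hence the single inclusion $\mathfrak{m}_xQ_f\subseteq V_{>b}Q_f$ would force $V_{>b}Q_f=\mathfrak{m}_xQ_f$ and $\dim\Gr_V^bQ_f=1$ simultaneously.

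The gap is that this inclusion --- which is the entire content of the theorem --- is established only in the weighted homogeneous case, where it follows from the explicit order formula $b=\ell(\alpha)-1=\sum_i(\alpha_i+1)w_i-1$ underlying Corollary~\ref{wtdhom}. For general $f$ you offer the heuristic that multiplying the Gelfand--Leray form $\omega/df$ by $g\in\mathfrak{m}_x$ ``contributes additional vanishing'' of the period integrals as $t\to 0$, and you concede that the actual argument requires the full microlocal theory of the Brieskorn lattice. That concession is the theorem: there is no a priori reason that the leading exponent of $\int_{\gamma_t}g\,\omega/df$ strictly exceeds that of $\int_{\gamma_t}\omega/df$, because the $V$-filtration on $Q_f$ is induced from the Kashiwara--Malgrange filtration along $t$ and carries no built-in compatibility with multiplication by functions of $z_1,\dots,z_{n+1}$. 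The known proof supplies exactly this missing compatibility: Saito's microlocal $V$-filtration $\widetilde{V}$ on $\mathcal{O}_{\Cee^{n+1},0}$ is multiplicative, $\widetilde{V}^\alpha\cdot\widetilde{V}^\beta\subseteq\widetilde{V}^{\alpha+\beta}$, with $\mathfrak{m}_x\subseteq\widetilde{V}^{>0}$, and the comparison with the Brieskorn lattice identifies the associated graded of $\mathcal{O}/J(f)$ with $\Gr_VQ_f$ up to shift. Unless you import those two facts (or simply cite \cite{DS} for the whole statement, as the paper does), the proposal reduces the theorem to its hardest step rather than proving it.
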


\begin{corollary}\label{boundalpha} If $X$ is an isolated hypersurface singularity of dimension $n$, then $\widetilde{\alpha}_X \leq \Dis \frac{n+1}{2}$, and $\widetilde{\alpha}_X = \Dis \frac{n+1}{2}$ $\iff$ $X$ is an ordinary double point. In particular, if $n=2k+1$ is odd, then there is no isolated hypersurface singularity $X$ of dimension $n$ which is $k$-rational, and $X$ is $k$-Du Bois $\iff$ $X$ is $k$-liminal $\iff$ $X$ is an ordinary double point.
\end{corollary}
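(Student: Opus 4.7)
The plan is to derive both statements from the symmetry of the spectrum about $(n+1)/2$ together with the multiplicity-one statement of Theorem~\ref{multone}. Recall that $\widetilde\alpha_X = \alpha_{\min}$ is the smallest spectral number and, by the symmetry $m_\alpha = m_{n+1-\alpha}$, the largest spectral number is $\alpha_{\max} = n+1-\widetilde\alpha_X$. Since the spectrum is nonempty, $\alpha_{\min}\le\alpha_{\max}$, which yields immediately
\[
\widetilde\alpha_X \le \frac{n+1}{2}.
\]

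For the equality case, suppose $\widetilde\alpha_X = (n+1)/2$. Then $\alpha_{\min} = \alpha_{\max} = (n+1)/2$, so the entire spectrum $\Sp(X,x)$ is concentrated at the single value $(n+1)/2$. By Theorem~\ref{multone}, the multiplicity $m_{\widetilde\alpha_X}$ equals $1$, so $\mu = \sum_\alpha m_\alpha = 1$. A hypersurface germ with Milnor number $1$ is analytically equivalent to $\{z_1^2+\dots+z_{n+1}^2=0\}$ by the holomorphic Morse lemma, i.e.\ an ordinary double point. Conversely, for the ODP we can apply Corollary~\ref{wtdhom} with weights $a_i=1$ and degree $d=2$: then each $w_i = 1/2$ and $\widetilde\alpha_X = \sum_{i=1}^{n+1}w_i = (n+1)/2$, giving the reverse implication.

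For the final statement, assume $n = 2k+1$, so that $(n+1)/2 = k+1$. By Theorem~\ref{mainrat}, $X$ being $k$-rational requires $\widetilde\alpha_X > k+1 = (n+1)/2$, which contradicts the bound above, so no such $X$ exists. Similarly, $X$ being $k$-Du Bois forces $\widetilde\alpha_X \ge k+1$, combined with the bound this means $\widetilde\alpha_X = (n+1)/2$, which by the equality case just established is equivalent to $X$ being an ordinary double point; in particular $X$ is then automatically $k$-liminal (since $k$-rationality is excluded). The three chains of equivalences collapse.

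There is no real obstacle here; the only subtle point is the appeal to the Morse lemma for $\mu=1$, which is standard in the isolated hypersurface setting. Everything else is a direct bookkeeping consequence of the spectral symmetry plus Theorem~\ref{multone} and Corollary~\ref{wtdhom}.
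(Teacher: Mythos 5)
Your proof is correct and follows essentially the same route as the paper: the bound comes from the symmetry of the spectrum about $(n+1)/2$, and the equality case uses Theorem~\ref{multone} to conclude that $Q_f$ is one-dimensional, hence (via $J(f)=\mathfrak{m}_x$, equivalently $\mu=1$ and the holomorphic Morse lemma) that $X$ is an ordinary double point. Your explicit verification of the converse for the ODP via Corollary~\ref{wtdhom} and the spelling-out of the odd-dimensional case are details the paper leaves implicit, but they are handled correctly.
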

\begin{proof} The first statement follows from the symmetry of the spectrum about $(n+1)/2$. (It also follows from Corollary~\ref{boundk} and Lemma~\ref{spectralchar}.)  For the second, again by symmetry, if $\widetilde{\alpha}_X = \Dis \frac{n+1}{2}$, then $Q_f = \Gr_V^{n-1/2}Q_f$ has dimension $1$. Hence $J(f) =\mathfrak{m}_x$, i.e.\ the partial derivatives of $f$ generate the maximal ideal in $\Cee\{z_1,\dots, z_{n+1}\}$. Thus $X$ is an ordinary double point.
\end{proof}

\begin{remark} (i) This statement has been generalized to the case where $X$ is not assumed \emph{a priori} to have isolated singularities by Dirks-Musta\c{t}\u{a} \cite[Corollary 6.3]{DM}. There is also a sharper bound due to Musta\c{t}\u{a}-Popa \cite[Theorem E(3)]{MP-V}. 

\noindent (ii) The case $\dim X =3$ and  $k=1$ in the final statement of Corollary~\ref{boundalpha} is due to Namikawa-Steenbrink \cite[Theorem 2.2]{NS}.
\end{remark}

The above results then imply  the following corollary:

\begin{corollary}\label{linkone} If $X$ is an isolated hypersurface $k$-liminal singularity of dimension $n$, then $s_{n-k} = \ell^{n-k,k} =\dim \Gr_F^{n-k}H^n(L) = 1$ and the map $\Gr_F^{n-k}H^n(M) \to \Gr_F^{n-k}H^n(L)$ is an isomorphism. Finally, $\ell^{k,n-k-1} = \dim \Gr_F^kH^{n-1}(L) =1$, the natural map
$H^{n-k-1} (\hX;\Omega_{\hX}^k(\log E)) \to \Gr_F^kH^{n-1}(L)$ is an isomorphism, and hence $\dim H^{n-k-1} (\hX;\Omega_{\hX}^k(\log E)) =1$.
\end{corollary}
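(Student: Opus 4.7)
The plan is to combine three ingredients already established: the Dimca--Saito multiplicity-one theorem at the minimal exponent (Theorem~\ref{multone}), the Milnor fiber/link exact sequence in Proposition~\ref{boundlink}, and the log de Rham long exact sequence from Lemma~\ref{convzero}(iii). Each of the five assertions will then drop out in turn. Since $X$ is a $k$-liminal hypersurface, $\widetilde\alpha_X = k+1$, and by Theorem~\ref{multone} we have $m_{k+1}=1$ with $k+1$ the minimum element of $\Sp(X,x)$. Lemma~\ref{somespec} then yields
$$s_{n-k} \;=\; \sum_{\alpha \in (k,k+1]\cap \Sp(X,x)} m_\alpha \;=\; m_{k+1} \;=\; 1.$$

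Next I would pin down the two relevant link numbers. Since $X$ is $k$-Du Bois, $s_k=0$, so Proposition~\ref{boundlink}(iii) gives $\ell^{k,n-k}\le s_k=0$. Substituting into Proposition~\ref{boundlink}(ii),
$$1 \;=\; s_{n-k} - s_k \;=\; \ell^{k,n-k-1} - \ell^{k,n-k} \;=\; \ell^{k,n-k-1},$$
and Serre duality $\ell^{p,q} = \ell^{n-p,n-q-1}$ gives $\ell^{n-k,k} = \ell^{k,n-k-1} = 1$. For the Milnor-to-link map, I use the exact sequence of mixed Hodge structures in Proposition~\ref{boundlink}(i) together with the duality $H^n(M,L) \cong H^n(M)\spcheck(-n)$, which implies
$$\dim \Gr^{n-k}_F H^n(M,L) \;=\; \dim \Gr^k_F H^n(M) \;=\; s_k \;=\; 0.$$
Strictness of MHS morphisms then gives an injection $\Gr^{n-k}_F H^n(M) \hookrightarrow \Gr^{n-k}_F H^n(L)$; as both sides have dimension $1$, this is an isomorphism.

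For the final, resolution-level statement I would apply Lemma~\ref{convzero}(iii) with $p=k$ and $q=n-k-1$, giving the segment
$$H^{n-k-1}\bigl(\hX;\Omega^k_{\hX}(\log E)(-E)\bigr) \to H^{n-k-1}\bigl(\hX;\Omega^k_{\hX}(\log E)\bigr) \to \Gr^k_F H^{n-1}(L) \to H^{n-k}\bigl(\hX;\Omega^k_{\hX}(\log E)(-E)\bigr).$$
The $k$-Du Bois hypothesis makes the outer two groups vanish via Theorem~\ref{maindB}, provided $n-k-1 \ge 1$ and $n-k \ge 1$; both hold because $k \le (n-1)/2$, since otherwise Corollary~\ref{boundk} applied to $s_0 = \cdots = s_k = 0$ would force $X$ smooth, contradicting $s_{n-k}=1$. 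Thus the middle map is an isomorphism of spaces of dimension $\ell^{k,n-k-1}=1$.

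The argument is essentially mechanical once these inputs are assembled; there is no serious obstacle. The one genuinely nontrivial ingredient is the multiplicity-one theorem of Dimca--Saito (Theorem~\ref{multone}): it is precisely what makes the $k$-liminal numerical invariants in the hypersurface case as rigid as possible, in contrast to the general lci setting where one would only obtain inequalities.
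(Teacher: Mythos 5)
Your proof is correct and follows essentially the same route as the paper: Dimca--Saito multiplicity one at the minimal exponent plus Lemma~\ref{somespec} to get $s_{n-k}=1$, Proposition~\ref{boundlink} to pin down $\ell^{k,n-k-1}=\ell^{n-k,k}=1$ and the Milnor-to-link isomorphism, and Lemma~\ref{convzero}(iii) with the $k$-Du Bois vanishing for the final statement. The only cosmetic differences are that you deduce the isomorphism $\Gr_F^{n-k}H^n(M)\to\Gr_F^{n-k}H^n(L)$ from injectivity (via $\Gr_F^{n-k}H^n(M,L)=0$) where the paper uses surjectivity plus a dimension count, and that you explicitly verify $k\le(n-1)/2$, which the paper leaves implicit.
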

\begin{proof} By Theorem~\ref{mainDB0},  $s_k =0$, hence $\ell^{k, n-k}= 0$. Theorem~\ref{multone} implies that $m_{\widetilde{\alpha}_{X,x}} =m_{k+1} =1$. By    Lemma~\ref{somespec},  $s_{n-k}= s_{n-k} - s_k =  m_{k+1} - m_k  =m_{k+1} =1$.   Moreover, by Proposition~\ref{boundlink}(ii),
$$1 = s_{n-k} - s_k = \ell^{k,n-k-1}-\ell^{k,n-k} = \ell^{k,n-k-1}.$$
Thus $\ell^{n-k,k} =\ell^{k,n-k-1} =  1$. Since   the map $\Gr_F^{n-k}H^n(M) \to \Gr_F^{n-k}H^n(L)$ is surjective, and both sides have dimension one, it is an isomorphism. The final statement  follows from Lemma~\ref{convzero}(iii) and the fact that, since $X$ is $k$-Du Bois, $H^{n-k-1} (\hX;\Omega_{\hX}^k(\log E)(-E))= H^{n-k} (\hX;\Omega_{\hX}^k(\log E)(-E)) =0$.
\end{proof}

\bibliography{duBois}
\end{document}